\newtheorem{theorem}{Theorem}
\newtheorem{theorem*}{Theorem}
\newtheorem{assumption}{Assumption}
\newtheorem{remark}{Remark}
\newtheorem{definition}{Definition}
\newtheorem{lemma}{Lemma}
\newtheorem{proposition}{Proposition}
\newcommand{\Ind}{\bm{1}} 
\newcommand{\Var}{\mathrm{Var}}
\newcommand{\Cov}{\mathrm{Cov}}
\newcommand{\argmin}{\mathop{\arg\min}}
\newcommand{\indep}{\perp \!\!\! \perp}
\title{Sequential causal inference in a single world of connected units}
\author{Aurelien Bibaut, Maya Petersen, \\
Nikos Vlassis, Maria Dimakopoulou, \\
Mark van der Laan}
\begin{document}

\maketitle

\begin{abstract}
    We consider adaptive designs for a trial involving $N$ individuals that we follow along $T$ time steps. We allow for the variables of one individual to depend on its past and on the past of other individuals. Our goal is to learn a mean outcome, averaged across the $N$ individuals, that we would observe, if we started from some given initial state, and we carried out a given sequence of counterfactual interventions for $\tau$ time steps. 
    
    We show how to identify a statistical parameter that equals this mean counterfactual outcome, and how to perform inference for this parameter, while adaptively learning an oracle design defined as a parameter of the true data generating distribution. Oracle designs of interest include the design that maximizes the efficiency for a statistical parameter of interest, or designs that mix the optimal treatment rule with a certain exploration distribution. We also show how to design adaptive stopping rules for sequential hypothesis testing.
    
    This setting presents unique technical challenges. Unlike in usual statistical settings where the data consists of several independent observations, here, due to network and temporal dependence, the data reduces to one single observation with dependent components. In particular, this precludes the use of sample splitting techniques. We therefore had to develop a new equicontinuity result and guarantees for estimators fitted on dependent data.
    Furthermore, since we want to design an adaptive stopping rule, we need guarantees over the joint distribution of the sequence of estimators. In particular, this requires our equicontinuity result to hold almost surely, and our convergence guarantees on nuisance estimators to hold uniformly in time. We introduce a nonparametric class of functions, which we argue is a realistic statistical model for nuisance parameters, and is such that we can check the required equicontinuity condition and show uniform-in-time convergence guarantees.
    
    We were motivated to work on this problem by the following two questions. (1) In the context of a sequential adaptive trial with $K$ treatment arms, how to design a procedure to identify in as few rounds as possible the treatment arm with best final outcome? (2) In the context of sequential randomized disease testing at the scale of a city, how to estimate and infer the value of an optimal testing and isolation strategy?
 \end{abstract}

\section{Introduction}


We consider the setting in which, given a set of $N$ individuals, a decision maker (or experimenter) alternatively, over a sequence of time points $t=1,2,\ldots$, assigns to each individual $i$ a treatment $A(t,i)$ and then collects a vector of measurements $L(t,i)$ on this individual. We consider individual and time point specific outcomes $Y(t,i)$ that can be defined from  $L(t,i)$. We also suppose that the decision maker can adapt the treatment assignment rule in response to past observations. 

In these situations, it is often natural to define the performance of a treatment rule in terms of the expectation average outcomes of the form $N^{-1} \sum_{i=1}^N Y(\tau, i)$ at some time point $\tau$, or as a function of such averages at different time points $\tau_1,\tau_2,\ldots$. Natural objectives the decision makers may want to pursue include learning as fast as possible the optimal treatment rule (a so-called \textit{pure exploration} goal), or to ensure that over a certain time period, the individuals experience outcomes as high as possible (a so-called \textit{regret minimization} objective).

This setting can arise in particular in business and public health applications. We consider two motivating examples.

\paragraph{First motivating example.} Suppose an infectious disease is circulating in a country, and that public health officials can access, for each inhabitant $i$, at each time point $t$, a vector of measurements $L(t,i)$ including the infection status, which we define as the outcome $Y(t,i)$, demographic characteristics and the set of people $i$ has been in contact with in the recent past. Suppose the government can assign to each individual $i$, at every time step $t$, a treatment $A(t,i)$ consistent of a certain set of restrictions on their daily activities. A question of interest is, given two candidates treatment rules, how to learn as quickly as possible which one is the most efficient.

\paragraph{Second motivating example.} Suppose that the administrators of a web platform wonder which of two versions of the user interface users like best in the long run. They select a set of $N$ users among all of the users of the platform, and assign from the beginning half of them to version $1$ ($A(t,i) = 1$ for every $i =1,\ldots,N/2$ and every $t \geq 1$) and the other half to version 2 ($A(t,i) = 2$ for every $i=N/2+1,\ldots,N$ and every $t \geq 1$). For each user $i$, at each time $t$, they collect of vector of measurements $L(t,i)$ on the user, which contains in particular measures of engagement with the platform, from which they define an outcome $Y(t,i)$. Given an arbitrary time point $\tau$, a question of interest is: how to find out as quickly as possible which user interface would have maximized the expected average outcome $N^{-1} \sum_{i=1}^N Y(\tau, i)$ at $\tau$?

\medskip

While traditionally the causal inference and sequential decision problems literature have focused on single time point interventions or multiple time points interventions on independents units, the data collect in many real world situations involve network dependence between individuals. In the infectious disease setting presented above, the network dependence arises from contagion effects. In the web platform example, adjusting the treatment rule of individuals at time $t$ as a function of the observed history of every individual up to time $t-1$ induces dependence between the trajectories of distinct individuals. In other business applications, there can be similar network effect arising due to word of mouth between socially connected users of the same service. Another source of association between units can arise from spillover effects, that is the effect of the treatment assignment of one individual on other individuals.

In this work, we define causal effects defined under temporal and network dependence, and we propose a methodology to design and analyze adaptive trials aiming at learning these causal effects. We propose a method to construct adaptive stopping rules for sequential hypothesis testing. We work under the key modelling assumption that the conditional distribution of the measurement vectors $L(t,i)$ given the past is constant across $i$ and $t$. This assumption is comparable to an homogeneity assumption in a Markov Decision Process setting. We will see further down that as a key consequence of this homogeneity assumption, the error rates of our estimators over this model are a function of $T \times N$, which acts as the effective sample size, even though under temporal and network dependence, we only observe a single independent draw of the data-generating distribution.

\subsection{Existing work}

The setting we study conjugates three topics often treated separately: causal inference under temporal and network dependence, and adaptive experimentation.

\paragraph{Causal inference with temporal dependence.} Temporal dependence in causal inference arises in longitudinal studies, and in particular in the dynamic treatment regime (DTR) literature for health applications, where patients are monitored across multiple time points, and a final outcome is measured at the end. In the DTR literature, dependence on the past is arbitrary and not identical across time points (as opposed to the homogeneous Markov Decision Process setting we discuss next), and convergence guarantees are stated in terms of the number of individuals $N$ enrolled in the study. In another strand of causal inference for longitudinal settings, authors assume that the trajectory of each individual can be modelled as an homogeneous Markov Decision Process (MDP) (see \cite{kallus2019efficiently} for the MDP model, and \cite{laan_chambaz_lendle2018} for a class of statistical models which include the MDP model), and convergence guarantees are stated in terms of the number of time points $T$ if one follows only one single individual, or in terms of $T \times N$, if one follows the trajectories of $N$ individuals over $T$ time steps. These works can be categorized in the Off-Policy Evaluation (OPE) sub-field of Reinforcement Learning (RL), where it is standard to model the trajectory of the system by an MDP.

\paragraph{Causal inference in networks.} Causal inference in networks has been studied by numerous authors (see e.g. \cite{hudgens_halloran2008, tchetgen_tchetgen-VanderWeele_2012, vdL2013, basse_airoldi2018, basse_feller_toulis2019, ogburn_diaz_sofrygin_vdL2020}). 

In network causal models, potential outcomes of an individual do not only depend on its own treatment history, but can also depend on the treatment history of other individuals it is connected to.

Applications include the study of infectious diseases and vaccines, spillover effects of advertising campaigns on social network platforms, and spillover effects in public policy interventions (see e.g. the aforementioned \cite{basse_feller_toulis2019}).

\paragraph{Adaptive experimentation.} Adaptive experimentation, a sub-field of sequential decision problems has been a very active field of study for more than 75 years, with seminal contributions dating back to the work of Wald on sequential probability ratio tests \citep{wald1945}, and the seminal multi-armed bandit paper of Robbins \citep{robbins1952}. The development of bandit algorithms was initially motivated by clinical trials with the goal of making these faster, and minimizing the opportunity cost of patients subjected to suboptimal treatments. Objective pursued in adaptive experimentation / sequential decision problems include (1) minimizing the cumulative regret, that is, over a fixed number of rounds, maximizing the sum of rewards collected / outcomes observed, and (2) inferential goals, such as identifying the best treatment arm with a certain predetermined level of confidence in as few rounds as possible (top arm identification in the fixed confidence setting), or to identify the best arm with as high a confidence level as possible, under a fixed number of rounds (best arm identification in the fixed confidence setting). In the bandit literature, it is usually assumed that rewards (and contexts in the contextual bandit setting) are independent of the past (this covers both the stochastic i.i.d. setting and the oblivious adversarial setting). In that sense, usual bandit methods aren't appropriate to deal with the case of trajectories of individuals with temporal dependence, which is the setting which interests us.

Bandit problems are a special case of reinforcement learning problems with trajectories of length $T=1$. In the general case however, reinforcement learning is concerned with trajectories of the system over multiple time points, and states and outcomes at one time point can depend on the state, outcomes, and treatment at previous time points. Note that under the MDP model, which is the standard model considered in RL, dependency on the past is fully captured by the latest state and treatment. The reinforcement literature literature is concerned with learning optimal policies, either in a sequential fashion, or in an off-policy fashion, and with evaluating policies from off-policy data.

\paragraph{Sequential adaptive experimentation in the statistics literature.} 
We see mainly two directions in which an experiment can be made adaptive to the past, and which the statistics literature has considered. The first one is the stopping rule. Contributions on adaptive stopping rules date back to the aforementioned work of Wald on sequential probability ratio tests \citep{wald1945}. The other component that can be chosen adaptively is the design, that is the stochastic rule that the experimenter uses to assign treatment in response to past observations and current covariates. An optimal design is defined with respect to a given statistical parameter and is the design that maximizes efficiency for that parameter, that is that leads to the smallest asymptotic variance of estimators of that parameter, and the highest power of tests of hypothesis defined from that parameter. \cite{vdL2008} proposed a comprehensive methodology for sequential adaptive trials in the single and multiple time point settings, for independent individuals (as opposed to the network interference setting)

\subsection{Contributions and comparison with past work}



Our theoretical contributions are the following. We present our causal model, and we define our causal parameter as a mean outcome under a post-intervention distribution under this causal model. Under identifiability conditions, this post-intervention distribution equals a G-computation formula. We thus define formally our statistical parameter as the corresponding mean outcome under the G-computation formula.

We derive its efficient influence function (EIF), and thereby the semiparametric efficiency bound. Our statistical model subsumes in particular the homogeneous MDP model with independent trajectories. To the best of our knowledge, this work is the first to provide a formal derivation of the EIF of mean outcomes under a $G$-computation formula under the homogeneous MDP model.

We provide a Targeted Maximum Likelihood Estimator (TMLE) and a one-step estimator of our target parameter for generic sequential adaptive designs.  We show under certain conditions that a certain process obtained by rescaling in time the sequence of estimates converges weakly to a Wiener process. In particular, this gives us the asymptotic distribution of our TMLE and one-step estimator. The conditions include in particular conditions on the design.
We show that, for designs that converge to a fixed limit design, our estimator has asymptotic variance equal to the variance of the canonical gradient of the target parameter, which we conjecture equals the semiparametric efficiency bound. We use the results on the time-rescaled sequence of estimates to design a method to construct adaptive stopping rules for sequential hypothesis testing. 

As mentioned earlier, some technical challenges arise from the fact that observations of different individuals at different time points are a priori dependent, and from the fact that we need to characterize the joint distribution of the sequence of estimates so as to be able to design an adaptive stopping rule. In particular the dependence between individuals implies that we cannot use the usual sample splitting techniques, which in the construction of semiparametric estimators allow to circumvent Donsker conditions \citep{klaassen1987, Zheng2011, kallus_uehara2020-DRL}. We therefore had to derive an almost sure equicontinuity result for empirical processes generated by weakly dependent data. The almost sure convergence aspect is key to obtaining guarantees on the joint distribution of the sequence of estimators. For the latter purpose, we also needed uniform-in-time convergence guarantees for nuisance estimators. We derived an exponential deviation bound for empirical risk minimizers fitted on weakly dependent data, which allows us to control these uniformly in time. Both the equicontinuity results and the results on empirical risk minimizers stem from a maximal inequality for empirical processes generated by weakly dependent data (that is from sequences that satisfy a certain mixing condition), which we obtain by applying an adaptive chaining device, a classical empirical process technique pioneered by \citep{ossiander1987}, to a Bernstein inequality for mixing sequences, proven by \cite{merlevede2009}.

\subsection{Paper organization} 

In section \ref{section:problem_formulation}, we describe our causal model, our causal parameter, the statistical model, the statistical target parameter parameter, and the class of adaptive designs we consider. In section \ref{section:structural_results}, we derive the efficient influence function (canonical gradient) of our target w.r.t. our statistical model, and we study the robustness properties of the EIF. In section \ref{section:estimators}, we provide a Targeted Maximum Likelihood Estimator and a one-step estimator of our parameter of interest, and we derive their convergence guarantees. In section \ref{section:Lipschitz_HAL_class_and_nuisance_estimation}, we introduce a class of functions that we use for nuisance modelling, and for modelling the EIF, and we give guarantees for empirical risk minimizers over it. In section \ref{section:adaptive_stopping_rules}, we show how to construct an adaptive stopping rule for sequential hypothesis testing. In section \ref{section:learning_optimal_design}, we discuss adaptive learning of the optimal design.

\section{Problem formulation}\label{section:problem_formulation}

\subsection{Observed data}

An experimenter interacts with an environment consisting of $N$ individuals indexed by $i=1,\ldots,N$, over rounds $t=1,\ldots,T$. At each round $t$, the experimenter first assigns the treatment vector $A(t) := (A(t,1), \ldots, A(t,N))$ to the $N$ individuals, where $A(t,i)$ is the treatment assigned to individual $i$ at time $t$, and then observes for each individual $i$, a vector $L(t,i)$ of time-varying covariates and outcomes. Let $L(t) := (L(t,1), \ldots, L(t,N))$. We denote $O(t,i) := (A(t,i), L(t,i))$ the data observed for individual $i$ at time $t$, and $O(t) = (A(t), L(t))$ the data observed for the all of the $N$ individuals at round $t$, $\bar{O}(t) := (O(1), \ldots, O(t))$ the data available at round $t$, $L^-(t) := \bar{O}(t-1)$, the data observed before $L(t)$, $A^-(t) := (\bar{O}(t-1), L(t))$, the data observed before $A(t)$, $L^-(t,i) = (L^-(t), L(t,1), \ldots, L(t,i-1))$, and $A^-(t,i) = (A^-(t), A(t,1), \ldots, A(t,i-1))$.

Under this notation, the data observed throughout the course of the trial is thus $\bar{O}(T)$, which we will also denote $O^{T,N}$ to make explicit the dependence on the number of individuals $N$.

\subsection{Causal model}

\paragraph{Formal definition of the causal model.} 

We suppose that there exists a set of deterministic functions $\{f_{A(t,i)}, f_{L(t,i)} : t \in [T], i \in [N] \}$, and a set of unobserved random variables $U := (U^A, U^L)$, with $U^L := (U^L(t,i) : t \in [T], i \in [N])$ and $U^A := (U^A(t,i) : t \in [T], i \in [N] )$, such that, for every $(t,i) \in [T] \times [N]$,
\begin{align}
    A(t,i) =& f_{A(t,i)}(\bar{L}(t-1), \bar{A}(t-1),  U^A(t,i)),\label{eq:NPSEM_eq1} \\ 
    L(t,i) =& f_{L(t,i)}(\bar{A}(t), \bar{L}(t-1), U^L(t,i)). \label{eq:NPSEM_eq2}
\end{align}

We place no restriction at this point on the functional form of the functions $f_{A(t,i)}$, $f_{L(t,i)}$. 
The set of equations \eqref{eq:NPSEM_eq1}-\eqref{eq:NPSEM_eq2} form a so-called \textit{Nonparametric Structural Equation Model} (NPSEM) (see e.g \cite{pearl_2009}). Let
\begin{align}
    \mathcal{M}^{T,N}_F := \left\lbrace P^{T,N}_F : P_U, f_{A(t,i)}, f_{L(t,i)}, \widetilde{c}_{A(t,i)}, \widetilde{c}_{L(t,i)} : t  \in [T], i \in [N] \right\rbrace,
\end{align}
be the set of probability distributions $P^{T,N}_F$ over the domains of $(O^{T,N}, U)$ induced by the NPSEM as the distribution $P_U$ of the unmeasured variable vector $U$, and the functions  $f_{A(t,i)}, f_{L(t,i)}, \widetilde{c}_{A(t,i)}, \widetilde{c}_{L(t,i)}$ vary freely. The set $\mathcal{M}^{T,N}_F$ is our so-called \textit{causal model}.

We denote $P_{0, F}^{T,N}$ the true distribution of $(O^{T,N},U)$. In the remainder of the article, we will use the subscript ``0'' to indicate true probability distributions or components thereof. 
Note that the full data distribution fully determines the observed data distribution. 

\paragraph{Counterfactual data and post-intervention distribution.} We now describe a counterfactual scenario in which the connectivity of the nodes and the intervention assigned to them is not as under the NPSEM above.

Let $\{g^*_{s,j} : s \in [\tau], j \in [N]\}$ be a collection of stochastic interventions at nodes $\{ A(s,j), s \in [T], j \in [N] \}$, that is, for every $(s,j)$, $g^*_{s,j}$ is a distribution over treatment arms conditional on $\bar{A}(t-1), \bar{L}(t-1)$.

Let $O^{*,T,N} := (O^*(t,i) : t \in [T], i \in [N])$, with $O^*(s,j) := (A^*(s,g), L^*(s,j))$ be the counterfactual data set generated from $U$ by the following NPSEM, obtained from the NPSEM \eqref{eq:NPSEM_eq1}-\eqref{eq:NPSEM_eq2} by replacing the intervention nodes by the counterfactual interventions $g^*_{s,j}$:
\begin{align}
    A^*(s,j) \sim & g_{s,j}^*(\cdot \mid \bar{L}(s-1), \bar{A}(s-1)), \\ 
    L^*(s,j) =& f_{L(s,j)}(\bar{A}(s-1), \bar{L}(s-1), U^L(s,j)).
\end{align}\
The distribution of $(O^{*,T,N},U)$ is the so-called \textit{post-intervention distribution} of the full data. We use the notation $P^{*,T,N}_F$ for the post-intervention distribution of the full data.

\paragraph{Causal target parameter.} Let $\tau \geq 1$ be an arbitrary time point. We define as our causal target parameter as a certain mean outcome at time point $\tau$, under the post-intervention distribution:
\begin{align}
    \Psi^F_\tau(P^{T,N}_F) = E_{P^{*,T,N}_F} \left[ Y^*(\tau) \right],
\end{align}
where $Y^*(\tau) := N^{-1} \sum_{i=1}^N Y^*(\tau, i)$, where $Y^*(\tau,i)$ is a unit-specific outcome at time $\tau$, defined as $Y^*(\tau, i) := f_Y(L^*(\tau, i))$ for a certain function $f_Y$. In words, $\Psi^F_\tau(P^{T,N}_F)$ is the mean outcome at time $\tau$, in the counterfactual scenario where the treatment mechanism is $g^*$.

\paragraph{Identifiability.} We use the notation $P^{T,N}$ for a generic distribution over the domain of the observed data $O^{T,N}$ and we denote $P^{T,N}_0$ the true distribution of the observed data.
As mentioned above, any distribution $P^{T,N}_F$ on the domain of the full data fully determines a corresponding distribution $P^{T,N}$ on the domain of the observed data. For any $P^{T,N}_F$, we say that the causal parameter $\Psi^F_\tau(P^{T,N}_F)$ is identifiable if we can write it as a function of the corresponding observed data distribution $P^{T,N}$.

The parameter $\Psi^F_\tau(P^{T,N}_F)$ is identifiable under the following two assumptions.

\begin{assumption}[Sequential randomization]\label{assumption:sequential_randomization}
For any $(t,i)$, $A(t,i) {\perp\!\!\perp} L^*(t,i) \mid A(t,i)^-$.
\end{assumption}

\begin{assumption}[Positivity]\label{assumption:positivity} For any $(t,i)$, $a \in \mathcal{A}$, and $l(t,i)^-$ such that $P_0\left[ L(t,i)^- = l(t,i)^-\right] > 0$,
\begin{align}
    P_0 \left[ A(t,i) = a \mid L(t,i)^- = l(t,i)^-\right] > 0.
\end{align}
\end{assumption}
It can be shown under assumptions \ref{assumption:sequential_randomization} and \ref{assumption:positivity} that the post-intervention distribution of $O^*$ under $P^{*,T,N}_F$ equals the following G-computation formula:
\begin{align}
    P^{T,N}_{g^*}(O^{T,N}) := \prod_{s=1}^\tau \prod_{j=1}^N P(L(s,j) \mid L(s,j)^-) g^*_{s,j}(A(s,j) \mid A(s,j)^-).
\end{align}
Note that the factors of $P^{T,N}_{g^*}$ are the conditional distributions $P(L(t,i) \mid L(t,i)^-)$, which are known if we know $P^{T,N}$, and the known counterfactual intervention $g^*$. Therefore, under assumptions \ref{assumption:sequential_randomization} and \ref{assumption:positivity}, there exists a mapping $\Psi_\tau$ such that 
\begin{align}
    \Psi^F_\tau(P^{T,N}_F) = \Psi_\tau(P^{T,N}).
\end{align}
The quantity $\Psi_\tau(P^{T,N}_0)$ is then our statistical target parameter. For $\gamma \in (0,1)$, which we interpret as a \textit{discount factor}, we also define the following other target parameter, derived from $\Psi_\tau$:
\begin{align}
    \Psi_{\tau, \gamma}(P^{T,N}) := \sum_{\tau' \geq \tau} \gamma^{\tau'-\tau} \Psi_{\tau'}(P^{T,N}).
\end{align}
This parameter is the typical target (in particular in the case $\tau=1$) in the Off-Policy Evaluation problem in reinforcement learning (see e.g. \cite{kallus_uehara2020-DRL, kallus2019efficiently}). As the analysis of $\Psi_{\tau,\gamma}$ follows from the analysis of $\Psi_{\tau}$, in the rest of the paper, we treat $\Psi_{\tau}$ as our main object of interest, and we will simply denote it $\Psi$ when there is no ambiguity about $\tau$.

\subsection{Statistical model}

The statistical model is the set of distributions we believe to contain the true data-generating distribution $P_0^{T,N}$. We denote it $\mathcal{M}^{T,N}$. We suppose that all of the elements of $\mathcal{M}^{T,N}$ admit a density w.r.t. a common dominating measure $\mu$. For any $P^{T,N}$, we denote $p^{T,N} := dP^{T,N} / d \mu$.
 
From the chain rule, any such $p^{T,N}$ can be factorized as a product of conditional densities as follows:
\begin{align}
    p^{T,N}(o^{T,N}) := \prod_{t=1}^T \prod_{i=1}^N g_{t,i}(a(t,i) \mid a(t,i)^-) q_{t,i}(l(t,i) \mid l(t,i)^-).
\end{align}
The conditional densities $q_{t,i}$ are a fact of nature and represent how each $L(t,i)$ responds to past interventions $A(s,j)$, and depends on the vectors $L(s,j)$ of past time-varying covariates and outcomes of for each individual $j \in [N]$, for every time point $s < t$. We refer to it as the \text{uncontrolled} part of the data-generating process of the trial, as the experimenter does not have control over it. The factors $g_{t,i}$, in our randomized experimental setting, are in control of the experimenter, and represent the set of stochastic decision rules she follows to assign treatment at each time step.

The above decomposition places no restriction on the temporal and network dependence in the observed data. We make an assumption on the complexity of the dependence allowed by supposing that each $L(t,i)$ can depend on the past of the trial only through a summary measure of the history of a fixed number of individuals. 

\begin{assumption}[Conditional independence given summary measure]\label{assumption:cond_indep_given_summary_measure}
    There exists a Euclidean set $\mathcal{C}_L \subset \mathbb{R}^{d_1}$, for some $d_1 \geq 1$, and a set of deterministic functions $c_{L(t,i)}$, $t \in [T]$, $i \in [N]$, with image included in $\mathcal{C}$, such that, for every $t \in [T]$, $i \in [N]$,
    \begin{align}
        q_{t,i}(l(t,i) \mid l(t,i)^-) = q_{t,i}(l(t,i) \mid c_{L(t,i)}( l(t,i)^-) ).
    \end{align}
    We denote $C_L(t,i) := c_{L(t,i)}(L(t,i)^-)$. 
\end{assumption}
The vector $C_L(t,i)$, which lies in the Euclidean set $\mathcal{C}_L$, plays the role of a finite dimensional summary measure of the past, which is such that $L(t,i)$ is independent of its past when conditioning on this summary measure. Following terminology used in existing works \citep{boruvka2017, laan_chambaz_lendle2018}, we will refer to it as the ``context'' preceding the node $L(t,i)$.

Without any further assumptions, it is a priori not possible to obtain consistent estimators from a single draw of $O^{T,N}$. For consistent estimation to be possible, we need the likelihood to exhibit a repeated factor. We therefore make the following assumption.

\begin{assumption}[Homogeneity]\label{assumption:homogeneity}
The factors $q_{t,i}$ are constant across values of $i$ and $t$, that is there exists a common conditional density $q$ such that $q_{t,i} = q$ for every $t$ and $i$.
\end{assumption}

We can now state a formal definition of our statistical model.
\begin{definition}[Statistical model]
Fix $\mu$ a summary measure on the domain of $O^{T,N}$. We define our statistical model $\mathcal{M}^{T,N}$ as the set of distributions $P^{T,N}$ over the domain of $O^{T,N}$ that satisfy assumptions \ref{assumption:cond_indep_given_summary_measure} and \ref{assumption:homogeneity}.
\end{definition}

\begin{remark}
We emphasize that, as we are in the setting of a controlled trial, the factors $g_{t,i}$ are known.
\end{remark}

\begin{remark}
Observe that any distribution $P^{T,N}$ in our statistical model $\mathcal{M}^{T,N}$ is fully determined by $q$, and $\mathcal{M}^{T,N}$ is indexed by the set of conditional densities $\mathcal{M}_q$ that we believe to contain $q_0$. Here, we assume that $\mathcal{M}_q$ is a saturated nonparametric model, that is for any $c \in \mathcal{C}_L$, the tangent space of $\{ l \mapsto q(\cdot \mid c) : q \in \mathcal{M}_q \}$ is equal to the Hilbert space 
\begin{align}
    L^2_{0,c}(q) :=\left\lbrace (l,c) \to f(l,c) : \int f^2(l,c)q(l \mid c) dl < \infty \text{ and } \int f(l,c) q(l \mid c) dl = 0 \right\rbrace.
\end{align}
\end{remark}

\begin{remark}
Under the homogeneity assumption, the target parameter $\Psi(P^{T,N})$ depends on $P^{T,N}$ only through the common conditional density $q = q(P^{T,N})$. Therefore, there exists a mapping $\Psi^{(1)}$ such that $\Psi(P^{T,N}) = \Psi^{(1)}(q(P^{T,N}))$.
\end{remark}

In constructing and analyzing our estimators, we will require the following assumption.
\begin{assumption}\label{assumption:C_L_sufficient}
For any $s \in [\tau]$, $j, k \in [N]$, 
\begin{align}
    E_{q,g^*} \left[ Y(k) \mid L(s,j), C_L(s,j) \right] = E_{q,g^*} \left[ Y(k) \mid L(s,j), L(s,j)^-\right],
\end{align}
where $E_{q,g^*}$ is the expectation operator under the G-computation formula $P^{\tau,N}_{g^*}$.
\end{assumption}
Making assumption \ref{assumption:C_L_sufficient} on top of the previous two assumptions \ref{assumption:cond_indep_given_summary_measure} and \ref{assumption:homogeneity} defines a new statistical model $\mathcal{M}^{T,N}_1$, which is a subset of our previously defined statistical model $\mathcal{M}^{T,N}$. Note that this statistical model a priori depends on $g^*$. We want to emphasize the following: while we will derive in the next section the canonical gradient $D(P^{T,N})$ of our target parameter $\Psi$ w.r.t. the larger model $\mathcal{M}_{T,N}$, we will use the assumptions defining the smaller model $\mathcal{M}_1^{T,N}$ to derive a more tractable representation $D_1(P^{T,N})$ of this canonical gradient $D(P^{T,N})$, which we will use to build our estimators. As a result, estimators that achieve asymptotic variance equal to the variance of $D_1(P^{T,N})$ can only be locally efficient w.r.t. the model $\mathcal{M}^{T,N}$: they can achieve the efficiency bound for $\mathcal{M}^{T,N}$ at $P^{T,N}$ only if $P^{T,N} \in \mathcal{M}_1^{T,N}$.

Finally, in some special cases that we discuss next it might be realistic to make the following set of three assumptions.
\begin{assumption}[Context decomposition]\label{assumption:C_L_decomposition}
For any $t \in [T]$, $i \in [N]$, the context summary mapping $c_{L(t,i)}$ can be decomposed as
\begin{align}
    c_{L(t,i)}(l(t,i)^-) = (a(t,i), c_A^{g,g^*}(t,i)),
\end{align}
where $c_A^{g,g^*}(t,i)$ is a context for the node $A(t,i)$ of the form 
\begin{align}
    c_A^{g,g^*}(t,i) = c_{A(t,i)}^{g,g^*}(a(t,i)^-),
\end{align}
where $c_{A(t,i)}^{g,g^*}$ is a known deterministic function with image in a Euclidean set $\mathcal{C}_A \subset \mathbb{R}^{d_2}$, such that, for any $a(t,i)$, $a(t,i)^-$,
\begin{align}
    g_{t,i}(a(t,i) \mid a(t,i)^-) =& g_{t,i}(a(t,i) \mid c_A^{g,g^*}(t,i)) \\
    \text{and } g^*_{t,i}(a(t,i) \mid a(t,i)^-) =& g^*_{t,i}(a(t,i) \mid c_A^{g,g^*}(t,i)).
\end{align}
\end{assumption}

\begin{assumption}[Individual outcomes independent from other trajectories]\label{assumption:indiv_outcomes_indep_other_trajs}
For any $q \in \mathcal{M}_q$, it holds under the corresponding G-computation formula $P_{g^*}^{\tau,N}$ that $Y(k) \indep O(s,j)$ for any $s \in [\tau]$ and $k \neq j$.
\end{assumption}

\begin{assumption}[Observed treatment homogeneity]\label{assumption:trt_homogeneity} The treatment mechanisms $g_{t,i}$ are constant across $t$ and $i$, that is, there exists a conditional density $g$ such that $g_{t,i} = g$ for every $t$ and $i$.
\end{assumption}


Making assumptions \ref{assumption:C_L_decomposition}, \ref{assumption:indiv_outcomes_indep_other_trajs} and \ref{assumption:trt_homogeneity} on top of assumptions \ref{assumption:cond_indep_given_summary_measure},  \ref{assumption:homogeneity} and \ref{assumption:C_L_sufficient} defines a new statistical model $\mathcal{M}^{T,N}_2$ such that $\mathcal{M}^{T,N}_2 \subset \mathcal{M}^{T,N}_1 \subset \mathcal{M}^{T,N}$. Here too, we emphasize that we will use these additional assumptions to obtain a simplified representation $D_2(P^{T,N})$ of the canonical gradient of $\Psi$ w.r.t. the larger model $\mathcal{M}^{T,N}$, but that we won't derive the canonical gradient w.r.t. the smaller model $\mathcal{M}^{T,N}_2$. As a result, estimators achieving asymptotic variance equal to the variance of $D_2(P^{T,N})$ can only be efficient w.r.t. $\mathcal{M}^{T,N}$ if $P^{T,N} \in \mathcal{M}^{T,N}_2$.

\subsection{Network structures covered by the statistical models considered in this article}

\subsubsection{Network structures covered by the larger model $\mathcal{M}^{T,N}$}

Note that assumption \ref{assumption:homogeneity} does not restrict the network structure. The network structures covered by model $\mathcal{M}^{T,N}$ are therefore those that satisfy assumption \ref{assumption:cond_indep_given_summary_measure}.

\paragraph{Example 1: finite memory, bounded number of contacts.} Consider the setting where $L(t,i)$ is allowed to depend on $L(t,i)$ only through a summary measure of the history over last $t_0$ steps of a set $F_L(t,i)$ of at most $N_0$ friends. Then, if we allow the dimension of $\mathcal{C}_L$ to be as large as $(2 t_0 + 1) N_0$, assumption \ref{assumption:cond_indep_given_summary_measure} holds for the summary measure
\begin{align}
    c_{L(t,i)}(l(s,i)^-) := & \left( (a(s,j) : s=t-t_0,\ldots,t,\ j \in F_L(t,i)), \right. \\
    & \left. \ (l(s,j) : s=t-t_0,\ldots,t-1,\ j \in F_L(t,i)) \right).
\end{align}

\paragraph{Example 2: finite memory, dependence on aggregate measures only.} Consider the set of distributions $P^{T,N}$ such that $L(t,i)$ depends on a finite set of aggregate measures of the trial's history observed before the nodes $L(t)$. Consider for example, in the infectious disease example, and suppose that the intervention $A(t,i)$ is whether the $i$ wears a mask at $t$. Such aggregate measures could include summaries of $\bar{L}(t-1)$ such as the average infection rate across the entire population at time steps $t-t_0,\ldots,t-1$, the average infection rate among individuals $i$ has been in contact with at time steps $t_0,\ldots,t-1$. Aggregate summary measures of $\bar{A}(t)$ could include the fraction of people wearing masks in the population at $t=t-t_0,\ldots,t$, and the number of individuals at time steps $t=t-t_0,\ldots,t$ not wearing masks that $i$ has been in contact with. Note that in this setting, we can build a summary function mapping histories into a fixed dimensional set $\mathcal{C}_L$ without imposing restrictions on the number of contacts of each individuals.

\subsubsection{Networks structures covered by the model $\mathcal{M}_1^{T,N}$}

\paragraph{Example 3: disjoint independent clusters modelled by MDPs.} Suppose that $H_1, \allowbreak \ldots, \allowbreak H_n$ form a partition of $[N]$, and that there exists a constant $N_0$ such that $|H_k| \leq N_0$ for every $k$. We say that each $H_k$ is a \textit{cluster}. For any cluster $H$, denote $A(t,H) := (A(t,i) : i \in H)$, $L(t,H) := (L(t,i) : i \in H)$, $O(t,H) := O(t,i) : i \in H)$. For any $i$, let $k(i)$ be the cluster $i$ belongs to. Suppose that
\begin{align}
    q(L(t,i) \mid L(t,i)^-) = q(L(t,i) \mid A(t,H_{k(i)}), L(t-1,H_{k(i)})),
\end{align}
that is, $L(t,i)$ depends on the nodes preceding it only through the latest treatment vector $A(t,H_{k(i)})$ of the individuals in the same cluster, and on the latest measurement vector $L(t,H_{k(i)})$ of individuals in the cluster. Suppose that
\begin{align}
    g^*_{t,i}(a(t,i) \mid a(t,i)^-) = g^*_{t,i}(a(t,i) \mid l(t-1, H_{k(i)}) ),
\end{align}
that is under the counterfactual intervention, $A(t,i)$ depends only on the latest vector of measurement vector $L(t-1,H_{k(i)})$ of the individuals in the same cluster as $i$.
Let 
\begin{align}
    c_{L(t,i)}(l(t,i)^-) = &\left( (l(t,j) : j < i, j \in H_{k(i)}), a(t,H_{k(i)}), l(t-1,H_{k(i)}) \right).
\end{align}
Then it is immediate that assumption \ref{assumption:cond_indep_given_summary_measure} holds. Since in general $A(t, H_{k(i)})$, $L(t,H_{k(i)})$ do not block the dependence between the nodes $\{L(t,j) : j < i, j \in H_{k(i)} \}$ and $L(\tau,k)$, for $k \in H_{k(i)}$, $\tau > t$, we include these in the context summary measure to ensure that $L(\tau, k) \indep L(t,i)^- \mid C^L(t,i)$. It is then straightforward to check that assumption \ref{assumption:C_L_sufficient} holds. As a result, the class of network structures described in this example is covered by model $\mathcal{M}_1^{T,N}$. 

Note that the network structure under the observed treatment mechanism $g$ and the counterfactual treatment mechanism $g^*$ do not need to be the same. Note that the assumptions defining model $\mathcal{M}_1^{T,N}$ do not place any restriction on how $A(t,i)$ might depend on the past under $g$.

\paragraph{Example 4: treatment limits social interactions, $g^*$ forces individuals to stay in clusters.} Let $H_1,\ldots,H_n$ be disjoints clusters of at most $N_0$ individuals forming a partition of $[N]$. In our infectious disease example, we take these clusters to be households. We define the treatment as follows: $A(t,i) = 1$ if individual $i$ can meet with people outside of her household at time $t$, and $A(t,i) = 0$ if not. Regardless of treatment status, we suppose that $L(t,i)$ depends on the nodes $L(t,i)^-$ preceding it only through the history over the latest $t_0$ time steps of a set $F_L(t,i)$ of a most $N_1 \geq N_0$ individuals. We further suppose that $L(t,i)$ can only depend on the history over the last $t_0$ time steps of individuals $i$ is allowed to meet. Define the censoring indicator
\begin{align}
    \Delta(t,i,j) = \Ind\left( \left\lbrace a(t,i) = 1 \text{ and } j \in F_L(t,i) \right\rbrace \text{ or } \left\lbrace a(t,i) = 0 \text{ and } j \in H_{k(i)} \right\rbrace \right),
\end{align}
and the history summary mapping 
\begin{align}
    c_L{(t,i)}(l(t,i)^-) := & \left( ((l(t,j) \Delta(t,i,j), \Delta(t,i,j) ) : j < i, j \in F_L(t,i)), \right.\\
    & \ ( (a(s,j) \Delta(s,i,j), \Delta(s,i,j) ) : j \in F_L(t,i), s = t - t_0,\ldots, t),\\
    & \left. \ ( (l(s,j) \Delta(s,i,j), \Delta(s,i,j) ) : j \in F_L(t,i), s = t - t_0, \ldots, t-1) \right).
\end{align}
Under the intervention $g^*$ that deterministically assigns $A(s,j) = 0$ to every individual $j \in [N]$ at every time point $s \in [\tau]$, it is straightforward to check that the above defined finite dimensional summary measure mapping verifies assumptions \ref{assumption:cond_indep_given_summary_measure} and \ref{assumption:C_L_sufficient}.

\subsubsection{Network structures covered by the model $\mathcal{M}^{T,N}_2$}

\paragraph{Example 5: $N$ independent MDPs under $g^*$.} Consider our second motivating example in which the administrators of a web platform want to identify the treatment arm $a$ that has highest long term outcome. Formally, let $\tau \geq 1$ be a time point at which we deem the outcome to be a ``long-term'' outcome, and for each arm $a = 1,2$, let $g^{*,a}_{t,i} (a(t,i) \mid a(t,i)^-) := \Ind(a(t,i) = a)$, the intervention that always assigns deterministically arm $a$.  We define the long terms outcomes of each arm as $\Psi(a)(P^{T,N}) := E_{q,g^{*,a}} [Y(\tau)]$. Suppose that 
\begin{align}
    q(l(t,i) \mid l(t,i)^-) =& q(l(t,i) \mid a(t,i), l(t-1,i), \\
    \text{and } g^*_{t,i}(a(t,i) \mid a(t,i)^-) =& g^*_{t,i}(a(t,i) \mid l(t,i)), \\
\end{align}
that is, under $g^*$, individual trajectories are independent MDPs. Suppose further that the observed treatment mechanism satisfies
\begin{align}
    g_{t,i}(a(t,i) \mid a(t,i)^-) =& g_{t,i}(a(t,i) \mid l(t,i), \theta(t)),
\end{align}
where $\theta(t) \in \mathbb{R}^{d_3}$ is a summary measure of the entire trial's history $\bar{o}(t-1)$ which contains the parameters of the design. In this case, assumption \ref{assumption:C_L_decomposition} is satisfied for $c_A^{g,g^*}(t,i) := (a(t,i), \theta(t))$. 

It is straightforward to check that assumptions \ref{assumption:cond_indep_given_summary_measure}, \ref{assumption:C_L_sufficient} and \ref{assumption:indiv_outcomes_indep_other_trajs} then hold.

We now discuss what type of adaptive designs can satisfy the constraint expressed in the previous display.

If the goal of the experimenter is to minimize regret, appropriate adaptive designs might include some type of variant of UCB, or some type of $\varepsilon$-greedy design. In the UCB case, so as to parameterize the design at time $t$, it suffices for $\theta(t)$ to contain estimates $(\widehat{\Psi}_t(a) : a=1,2)$ of the long term outcomes under each arms, and of the standard deviations, which we denote $(\widehat{\sigma}_t(a): a=1,2)$ of these estimates. In the case of an $\varepsilon$-greedy design, $\theta(t)$ needs only to contain $(\widehat{\Psi}_t(a) : a=1,2)$. If the goal 

If the goal of the experimenter is to maximize the efficiency of an estimator of the contrast $\Psi(2)(P^{T,N}_0) - \Psi(1)(P^{T,N}_0)$, an appropriate design might some type of Neyman allocation design (see e.g. \cite{vdL2008}). Such a design can be defined based on estimates $(\widehat{\sigma}_t(a): a=1,2)$ of the standard deviations of the canonical gradients of $\Psi(1)$ and $\Psi(2)$.

\subsection{Comparison with the statistical model studied in past works}

\cite{laan_chambaz_lendle2018} and \cite{kallus2019efficiently} consider single individual trajectories or multiple independent single trajectories, that is they work in the case $N=1$.  The model studied in \cite{laan_chambaz_lendle2018} is $\mathcal{M}^{T,N}$ under $N=1$.  The homogeneous MDP model studied in \cite{kallus_uehara2020-DRL} is a special case of the model $\mathcal{M}^{T,N}_2$, in the case $N=1$.

We point out that neither of these two works provide a formal proof of the derivation of the canonical gradient of their target parameters w.r.t. the statistical models they consider. These can be obtained from the results of this article.

\cite{vdL2013} and \cite{ogburn_diaz_sofrygin_vdL2020} study a more general setting where $g$ is unknown and $q_{t,i}$ is not assumed to be constant across time points. This means that their statistical model contains $\mathcal{M}^{T,N}$. Note that the the canonical gradient of $\Psi$ w.r.t. their larger model is not equal to the canonical gradient of $\Psi$ w.r.t. $\mathcal{M}^{T,N}$. We point out nevertheless that the derivation of the canonical gradient of $\Psi$ w.r.t. $\mathcal{M}^{T,N}$ follows from a straightforward adaptation of the proof technique of \cite{vdL2013}.

\section{Structural properties of our target paremeter}\label{section:structural_results}

\subsection{Efficient influence function}

In the upcoming theorem, we present the canonical gradient $D$ of $\Psi$ w.r.t. $\mathcal{M}^{T,N}$. We also provide two simplified representations of $D$ when $P^{T,N}$ is in $\mathcal{M}^{T,N}_1$, and in $\mathcal{M}^{T,N}_2$, respectively. As pointed out in the previous section, we stress out that these are simplified representation of the canonical gradient w.r.t. $\mathcal{M}^{T,N}$ when $P^{T,N}$ belongs to submodels of $\mathcal{M}^{T,N}$, and not the expressions of the canonical gradient w.r.t. these submodels.

Let $h_{t,i}^L$ and $h_{t,i}^{*,L}$, be the marginal densities of $C^L(t,i)$ under $P^{T,N}$ and the corresponding G-computation formula $P_{g^*}^{\tau,N}$, and let $\bar{h}_{T,N}^L := (T N)^{-1} \sum_{t=1}^N \sum_{i=1}^N h_{t,i}^L$ Under assumption \ref{assumption:C_L_decomposition}, the contexts are $C_A^{g,g^*}(t,i)$ is defined. We then denote $h_{t,i}^A$ and $\bar{h}^*_{t,i}$ the marginal densities of $C_A^{g,g^*}(t,i)$ under $P^{T,N}$ and $P^{\tau,N}_{g^*}$, respectively, and we let $\bar{h}^A_{T,N} := (T N)^{-1} \sum_{t=1}^T \sum_{i=1}^N h_{t,i}^A$. Since we will refer more often to $h_{t,i}^{*,L}$ and $\bar{h}^L_{T,N}$, than to the other marginal densities, we will often simply denote them $h_{t,i}^*$ and $\bar{h}_{T,N}$.

\begin{theorem}[Representation of the canonical gradient]\label{thm:EIF_representation}
The canonical gradient of $\Psi$ w.r.t. $\mathcal{M}^{T,N}$ at $P^{T,N}$ is given by
\begin{align}
    D(q)(o^{T,N}) = \frac{1}{T N} \sum_{t=1}^T \sum_{i=1}^N \bar{D}_{T,N}(q)(c^L(t,i), l(t,i)),
\end{align}
where, for any $c^L$ and $l$,
\begin{align}
    \bar{D}_{T,N}(c^L, l) = \sum_{s=1}^\tau \sum_{j=1}^N \frac{h^L_{t,i}(c^L)}{\bar{h}^L_{T,N}(c^L)} & \left\lbrace E_{q,g} \left[Y g^* / g \mid L(t,i) = l, C^L(t,i) = c^L \right] \right. \\
    & \left. - E_{q,g} \left[Y g^* / g \mid C^L(t,i) = c^L \right] \right\rbrace. \label{eq:EIF_rep1}
\end{align}
If $P^{T,N} \in \mathcal{M}_1^{T,N}$, then we can represent $\bar{D}_{T,N}$ as follows:
\begin{align}
   \bar{D}_{T,N}(c^L, l) = \sum_{s=1}^\tau \sum_{j=1}^N \frac{h^{*,L}_{t,i}(c^L)}{\bar{h}^L_{T,N}(c^L)} & \left\lbrace E_{q,g^*} \left[Y \mid L(t,i) = l, C^L(t,i) = c^L \right] \right. \\
   & \left. - E_{q,g^*} \left[Y  \mid C^L(t,i) = c^L \right] \right\rbrace. \label{eq:EIF_rep2}
\end{align}
Furthermore, if $P^{T,N} \in \mathcal{M}^{T,N}_2$, then the following representation of $\bar{D}_{T,N}$ holds:
\begin{align}
    \bar{D}_{T,N}(q)(c^L, l) = \sum_{s=1}^\tau \sum_{j=1}^N \omega_{s,j}(c^A) \eta_{s,j}(a \mid c^A) &\left\lbrace  E_{q,g^*}[Y \mid L(t,i) = l, A(t,i) = a, C^A(t,i) = c^A] \right.\\
    & \left. - E_{q,g^*} [Y \mid A(t,i) = a, C^A(t,i) = c^A]  \right\rbrace, \label{eq:EIF_rep3}
\end{align}
where $\omega_{s,j} := h^{*,A}_{t,i} / \bar{h}^A_{T,N}$, and $\eta_{s,j} := g^*_{s,j} / g_{s,j} = g^*_{s,j} / g_{s,j}$ (since under assumption \ref{assumption:trt_homogeneity}, $g_{s,j} = g$ for some $g$ common across values of $s$ and $j$).
\end{theorem}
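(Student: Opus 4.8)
The plan is to identify $D$ as the canonical gradient, i.e. the unique gradient of $\Psi$ lying in the tangent space of $\mathcal{M}^{T,N}$ at $P^{T,N}$, obtained by computing the pathwise derivative of $\Psi$ along one-dimensional submodels and matching it against the $L^2(P^{T,N})$ inner product. First I would characterize the tangent space. Since the treatment factors $g_{t,i}$ are known and the density factorizes over nodes $(t,i)$ through the single homogeneous kernel $q$, a fluctuation $q_\epsilon$ with score $s(\cdot\mid c)\in L^2_{0,c}(q)$ induces the observed-data score $S_s(o^{T,N})=\sum_{t,i} s(l(t,i)\mid c_L(t,i))$, and the tangent space $\mathcal{T}$ is the $L^2(P^{T,N})$-closure of the span of these $S_s$. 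I would check at the outset that each proposed $\bar D_{T,N}$ is centered given its context, so that $D\in\mathcal{T}$; this holds because every summand is a difference of a conditional expectation given $(L,C^L)$ and the same expectation given $C^L$ alone.

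Next I would compute the pathwise derivative. Writing $\Psi^{(1)}(q)=E_{q,g^*}[Y(\tau)]$ and differentiating the $G$-computation density, in which $q$ appears once at each node $(s,j)$, $s\in[\tau]$, $j\in[N]$, the product rule yields $\frac{d}{d\epsilon}\Psi^{(1)}(q_\epsilon)\big|_{0}=\sum_{s=1}^\tau\sum_{j=1}^N E_{q,g^*}\big[Y(\tau)\,s(L(s,j)\mid C_L(s,j))\big]$. Conditioning on $(L(s,j),C_L(s,j))$ and using that $s$ is centered given $C_L(s,j)$, I replace $Y(\tau)$ by the centered counterfactual regression $E_{q,g^*}[Y(\tau)\mid L(s,j),C_L(s,j)]-E_{q,g^*}[Y(\tau)\mid C_L(s,j)]$. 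Because $L(s,j)\mid C_L(s,j)\sim q(\cdot\mid C_L(s,j))$ under both $P^{T,N}$ and $P^{\tau,N}_{g^*}$ (only the treatment mechanism differs), the sole discrepancy between the two measures at this node is the marginal law of $C_L(s,j)$, namely $h^{*,L}_{s,j}$ versus the observed $h^L_{s,j}$.

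I would then match this against $\langle D,S_s\rangle_{P^{T,N}}$. The key lemma is that all off-diagonal terms $E_{P^{T,N}}[\bar D_{T,N}(L(t,i),C_L(t,i))\,s(L(t',i')\mid C_L(t',i'))]$ with $(t,i)\neq(t',i')$ vanish: ordering the nodes, if the score node is later it is centered given its own context by Assumption \ref{assumption:cond_indep_given_summary_measure} while the $\bar D_{T,N}$ factor is measurable with respect to that node's past, so one conditioning step kills the term (and symmetrically when the $\bar D_{T,N}$ node is later, using that $\bar D_{T,N}$ is centered given its context). Only the diagonal survives, giving $\langle D,S_s\rangle_{P^{T,N}}=\int \bar h^L_{T,N}(c)\int \bar D_{T,N}(c,l)\,s(l\mid c)\,q(l\mid c)\,dl\,dc$ after using $\frac1{TN}\sum_{t,i}h^L_{t,i}=\bar h^L_{T,N}$. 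Equating this with the pathwise derivative for every $s$ forces the pointwise identity $\bar h^L_{T,N}(c)\,\bar D_{T,N}(c,l)=\sum_{s,j}h^{*,L}_{s,j}(c)\{E_{q,g^*}[Y\mid L=l,C^L=c]-E_{q,g^*}[Y\mid C^L=c]\}$, which is representation \eqref{eq:EIF_rep2} and already lies in $\mathcal{T}$; the reduction to conditioning on the context alone is exact under $\mathcal{M}_1^{T,N}$ by Assumption \ref{assumption:C_L_sufficient}. Representation \eqref{eq:EIF_rep1} for the general model comes from the same computation with the change of measure carried through importance weights instead, rewriting $E_{q,g^*}[Y\mid\cdot]=E_{q,g}[Y\,g^*/g\mid\cdot]$ and absorbing the cumulative ratio on the past into the conversion between $h^{*,L}_{s,j}$ and $h^L_{s,j}$, so that only observed-data conditional expectations appear.

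Finally, representation \eqref{eq:EIF_rep3} follows by specializing to $\mathcal{M}_2^{T,N}$: Assumption \ref{assumption:C_L_decomposition} splits the context as $C^L=(A,C^A)$, so I further condition on $A=a$, $C^A=c^A$ and the context-density ratio factors through $\omega_{s,j}=h^{*,A}_{s,j}/\bar h^A_{T,N}$ together with the one-step ratio $\eta_{s,j}=g^*_{s,j}/g_{s,j}$ (using Assumption \ref{assumption:trt_homogeneity}), while Assumption \ref{assumption:indiv_outcomes_indep_other_trajs} collapses $Y(\tau)=N^{-1}\sum_k Y(\tau,k)$ onto the single relevant trajectory, reducing the conditional expectations to single-unit quantities. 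The main obstacle I anticipate is the change-of-measure bookkeeping under simultaneous temporal and network dependence: proving the off-diagonal terms vanish (which hinges on the sequential structure and Assumption \ref{assumption:cond_indep_given_summary_measure}) and verifying that the cumulative importance weights correctly reconcile the observed and counterfactual context marginals, so that \eqref{eq:EIF_rep1} and \eqref{eq:EIF_rep2} are genuinely the same element of $\mathcal{T}$. The derivation otherwise parallels the projection technique of \cite{vdL2013}, adapted to the repeated-factor homogeneous structure that produces the $1/(TN)$ averaging.
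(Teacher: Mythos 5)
Your proposal is correct, but it is organized differently from the paper's proof, and the comparison is instructive. The paper proceeds in two modular steps: it first exhibits an initial gradient in observed-data (IPW) form, $D^*(o^{T,N})=\prod_{t,i}\tfrac{g^*_{t,i}}{g_{t,i}}y-\Psi(q)$, by differentiating along a parametric submodel, and then invokes the projection lemma (lemma \ref{lemma:projection_tangent_space}, adapted from \cite{vdL2013}) to project $D^*$ onto the tangent space, which yields representation \eqref{eq:EIF_rep1} immediately; representation \eqref{eq:EIF_rep2} is then obtained from \eqref{eq:EIF_rep1} by an explicit change-of-measure computation in which assumption \ref{assumption:C_L_sufficient} is used to pull the full-past conditional expectation $E_{q,g^*}[Y\mid L(t,i),L(t,i)^-]$ out of an integral over histories sharing the same context, and \eqref{eq:EIF_rep3} follows immediately. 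You instead skip the initial-gradient-plus-projection scaffolding and verify the canonical gradient directly from its definition: you compute the pathwise derivative under the G-computation measure, $\sum_{s,j}E_{q,g^*}[Y\,s(L(s,j)\mid C_L(s,j))]$, prove by hand that the off-diagonal terms of $\langle D,S_s\rangle_{P^{T,N}}$ vanish (this orthogonality argument is precisely the engine inside the paper's projection lemma, so you are re-proving rather than citing it), and equate the diagonal with the derivative to obtain \eqref{eq:EIF_rep2} first, with \eqref{eq:EIF_rep1} recovered afterwards by the importance-weighting identity $h^L_{s,j}(c)\,E_{q,g}[Yg^*/g\mid L(s,j)=l,C^L(s,j)=c]=h^{*,L}_{s,j}(c)\,E_{q,g^*}[Y\mid L(s,j)=l,C^L(s,j)=c]$; your treatment of \eqref{eq:EIF_rep3} matches the paper's. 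What each approach buys: the paper's route is modular and reusable (the projection lemma handles any initial gradient), while yours is self-contained, makes transparent why the weights $h^{*,L}_{s,j}/\bar h^L_{T,N}$ appear (they reconcile the counterfactual node marginals in the derivative with the observed, $TN$-averaged node marginals in the inner product), and reverses the logical order of the two representations. One subtlety worth noting: your tower-property step produces conditional expectations given $(L(s,j),C^L(s,j))$ automatically, so your argument delivers \eqref{eq:EIF_rep2} with these interpreted as genuine conditional expectations under $P^{\tau,N}_{g^*}$, whereas the paper needs assumption \ref{assumption:C_L_sufficient} because its route goes through the full-past regressions; since the theorem only asserts \eqref{eq:EIF_rep2} on $\mathcal{M}_1^{T,N}$, where the two interpretations coincide, this discrepancy is harmless, but you should state explicitly that assumption \ref{assumption:C_L_sufficient} is what identifies the context-conditional regression appearing in your matching identity with the full-past regression that one actually models and estimates.
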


\subsection{First order expansion and robustness properties}

Let $P^{T,N} \in \mathcal{M}^{T,N}$. Denote $q = q(P^{T,N})$ and $q_0 = q(P^{T,N}_0)$. Let 
\begin{align}
    R(q,q_0) :=& \Psi(P^{T,N}) - \Psi(P^{T,N}_0) + E_{P_0^{T,N}} \left[D(q)(O^{T,N}) \right]\\
    =&\Psi^{(1)}(q) - \Psi^{(1)}(q_0) + E_{P_0^{T,N}} \left[D(q)(O^{T,N}) \right].
\end{align}
We like to view the equivalent representation 
\begin{align}
    \Psi(P^{T,N}) - \Psi(P^{T,N}_0) = -E_{P_0^{T,N}} \left[D(q)(O^{T,N})\right] + R(q,q_0)
\end{align}
as a functional first order Taylor expansion of the difference $\Psi(P^{T,N}) - \Psi(P^{T,N}_0)$, in which we view $R(q, q_0)$ as a remainder term, which we will show is second order. We say that a remainder term $R'(q,q_0)$ is second order if it can be written as a sum of terms such that every term has a factor of the form $\prod_k (\eta_k(q) - \eta_p(q))^{\alpha_k}$, with $\sum_k \alpha_k \geq 2$. In the usual sense, we say that a remainder term $R'(q,q_0)$ is robust (or equivalently we say that the canonical gradient from which it is formed is robust) if it can be rewritten as $R_1'((\eta_1(q),\ldots, \eta_p(q)), (\eta_1(q_0),\ldots,\eta_p(q_0))$, with $\eta_1(q), \ldots, \eta_p(q)$ variation independent nuisance parameters, and is equal to zero is $\eta_i(q) = \eta_i(q_0)$ for every $i$ in a subset $\mathcal{I} \subset [p]$, $\mathcal{I} \neq [p]$. Note that if a remainder term is second order w.r.t. variation independent parameters, then it is robust in the usual sense.

Unfortunately, the canonical gradient of $\Psi$ w.r.t. $\mathcal{M}^{T,N}$ is not robust in the usual sense, but we can show that it is second order and robust in a weaker sense, in which the nuisance $\eta_1(q),\ldots, \eta_p(q)$ are not variation independent.

We give two results that show that the remainder term $R$ is second order and robust in this weaker sense. These two results correspond to respectively representations \eqref{eq:EIF_rep2} and \eqref{eq:EIF_rep3} of the canonical gradient.

For pairs of indices $(t,i)$ and $(s,j)$, we write $(s,j) < (t,i)$ (resp. $(s,j) > (t,i)$) if $(s,j)$ comes strictly before (resp. strictly after) $(t,i)$ in the column ordering of indices. For any $q$, we denote
\begin{gather}
    q_{t,i}: o^{T,N} \mapsto q(l(t,i) \mid c^L(t,i)), \qquad q_{-(t,i)} : o^{T,N} \mapsto \prod_{(s,j) \neq (t,i)} q(l(s,j) \mid c^L(s,j)) \\
    q_{(t,i)-} : o^{T,N} \mapsto \prod_{(s,j) < (t,i)} q(l(s,j) \mid c^L(s,j)),\\
     \text{and }  q_{(t,i)+} : o^{T,N} \mapsto \prod_{(s,j) > (t,i)} q(l(s,j) \mid c^L(s,j)).
\end{gather}

\begin{theorem}\label{thm:first_second_order_result}
Suppose that $P \in \mathcal{M}^{T,M}_1$. Then, we can rewrite $R(q,q_0)$ as $R_1(\bar{h}_{T,N}, \bar{h}_{0,T,N}, q, q_0)$ where $R_1(\bar{h}_{T,N}, \bar{h}_{0,T,N}, q, q_0)$ satisfies
\begin{align}
    R_1(\bar{h}_{T,N}, \bar{h}_{0,T,N}, q, q_0) = &  R_1(\bar{h}_{T,N}, \bar{h}_{0,T,N}, q, q_0) - R_1(\bar{h}_{0,T,N}, \bar{h}_{0,T,N}, q, q_0) \\
    &+ R_1(\bar{h}_{0,T,N}, \bar{h}_{0,T,N}, q, q_0),
\end{align}
where,
\begin{align}
    &R_1(\bar{h}_{T,N}, \bar{h}_{0,T,N}, q, q_0) - R_1(\bar{h}_{0,T,N}, \bar{h}_{0,T,N}, q, q_0) \\
    &= \sum_{s=1}^\tau \sum_{j=1}^N \int \left(h_{s,j}^* \frac{\bar{h}_{0,T,N} - \bar{h}_{T,N}}{\bar{h}_{T,N}}\right)(c) (q_0 - q)(l \mid c) 
    \times  E_{q, g^*}[Y \mid L(t,i) = l, C(t,i) = c] dl dc,
\end{align}
and
\begin{align}
    R_1(\bar{h}_{0,T,N}, \bar{h}_{0,T,N}, q, q_0) = \sum_{s=1}^\tau \sum_{j=1}^N E_{q_{(s,j)-}, (q - q_0)_{(s,j)}, q_{0,(s,j)+} - q_{(s,j)+}, g^*} Y.
\end{align}
\end{theorem}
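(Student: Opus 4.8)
The plan is to compute the two summands of $R(q,q_0) = \Psi^{(1)}(q) - \Psi^{(1)}(q_0) + E_{P_0^{T,N}}[D(q)]$ separately but in a mutually compatible node-by-node form, and then read off the function $R_1(\bar h_{T,N}, \bar h_{0,T,N}, q, q_0)$ together with its claimed decomposition. The two ingredients are: (i) a telescoping expansion of the parameter difference exploiting that the G-computation functional is separately linear in each node factor $q(\cdot \mid c)$; and (ii) a direct evaluation of $E_{P_0^{T,N}}[D(q)]$ from representation \eqref{eq:EIF_rep2}, which is the relevant one since $P \in \mathcal{M}_1^{T,N}$. The key observation guiding the split is that the only place $\bar h_{T,N}$ (rather than $\bar h_{0,T,N}$) enters $R$ is through the denominator of the clever covariate in \eqref{eq:EIF_rep2}; isolating that single dependence is exactly what produces the two stated terms.

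First I would establish the telescoping identity. Writing $\Psi^{(1)}(q) = E_{q,g^*}[Y]$ as the integral of $Y$ against the G-computation density $\prod_{(s,j)} q(l(s,j)\mid c^L(s,j))\, g^*_{s,j}$ and applying the product-difference identity $\prod a_k - \prod b_k = \sum_m (\prod_{k<m} a_k)(a_m-b_m)(\prod_{k>m} b_k)$ in the column ordering $<$ gives
\begin{align}
    \Psi^{(1)}(q) - \Psi^{(1)}(q_0) = \sum_{s=1}^\tau \sum_{j=1}^N E_{q_{(s,j)-},\, (q-q_0)_{(s,j)},\, q_{0,(s,j)+},\, g^*}\, Y .
\end{align}
Next I would evaluate $E_{P_0^{T,N}}[D(q)]$. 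Because $D(q)$ is the $(t,i)$-average of $\bar D_{T,N}$ evaluated at the observed pair $(c^L(t,i), l(t,i))$, and $\bar D_{T,N}$ depends on that pair alone, taking $E_{P_0^{T,N}}$ integrates $(c^L(t,i), l(t,i))$ against its true joint density $h^L_{0,t,i}(c)\, q_0(l\mid c)$ and averages over $(t,i)$, turning $(TN)^{-1}\sum_{t,i} h^L_{0,t,i}$ into $\bar h_{0,T,N}$. Integrating $l$ against $q_0$ and using $E_{q,g^*}[Y\mid C^L(s,j)=c] = \int q(l\mid c)\, E_{q,g^*}[Y\mid L(s,j)=l, C^L(s,j)=c]\, dl$ collapses the centered conditional expectation in \eqref{eq:EIF_rep2} into a factor $(q_0-q)(l\mid c)$, yielding
\begin{align}
    E_{P_0^{T,N}}[D(q)] = \sum_{s=1}^\tau \sum_{j=1}^N \int h^*_{s,j}(c)\, \frac{\bar h_{0,T,N}(c)}{\bar h_{T,N}(c)} \int (q_0-q)(l\mid c)\, E_{q,g^*}\!\left[Y\mid L(s,j)=l,\, C^L(s,j)=c\right] dl\, dc .
\end{align}
This display exhibits $R(q,q_0)$ as a function $R_1(\bar h_{T,N}, \bar h_{0,T,N}, q, q_0)$ in which $\bar h_{T,N}$ appears only in the clever-covariate denominator.

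Finally I would split $R_1$ at the diagonal $\bar h_{T,N} = \bar h_{0,T,N}$. On the diagonal the clever-covariate ratio equals one, and the resulting integral is precisely $-\sum_{s,j} E_{q_{(s,j)-},\, (q-q_0)_{(s,j)},\, q_{(s,j)+},\, g^*}\, Y$, i.e. minus the directional derivative of $\Psi^{(1)}$ at $q$ along $q-q_0$ (the pre-nodes in state $q$ supply the counterfactual context marginal $h^*_{s,j}$, the post-nodes in state $q$ supply the outcome regression, and the node factor $(q-q_0)$ supplies the integrand). Adding this to the telescoping identity and using linearity in the post-node product $\big($the factors $q_{0,(s,j)+}$ and $-q_{(s,j)+}$ merge into $q_{0,(s,j)+} - q_{(s,j)+}$$\big)$ reproduces exactly the stated $R_1(\bar h_{0,T,N}, \bar h_{0,T,N}, q, q_0)$. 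Subtracting this diagonal value from $R_1(\bar h_{T,N}, \bar h_{0,T,N}, q, q_0)$ leaves only the denominator discrepancy, which factors as the claimed $\sum_{s,j}\int \big(h^*_{s,j}(\bar h_{0,T,N}-\bar h_{T,N})/\bar h_{T,N}\big)(c)\,(q_0-q)(l\mid c)\, E_{q,g^*}[Y\mid L(s,j)=l, C^L(s,j)=c]\, dl\, dc$, manifestly a product of the two differences $\bar h_{0,T,N}-\bar h_{T,N}$ and $q_0-q$.

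I expect the main obstacle to be the bookkeeping in the second step: one must keep two distinct context-marginal averages apart --- the denominator $\bar h_{T,N}$ inherited from the gradient evaluated at $q$, versus the $\bar h_{0,T,N}$ produced by the expectation under $P_0^{T,N}$ --- and verify that the counterfactual marginals $h^*_{s,j}$ together with the conditional-mean centering recombine exactly into $(q_0-q)$ times the $q$-outcome regression, with no leftover cross terms. This very mismatch between $\bar h_{T,N}$ and $\bar h_{0,T,N}$ is the structural reason the remainder is not robust in the variation-independent sense and is what forces $\bar h_{T,N}$ to reappear as a nuisance alongside $q$ in the second-order term.
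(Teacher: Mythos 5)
Your proposal is correct and follows essentially the same route as the paper: the same telescoping product-difference identity for $\Psi^{(1)}(q)-\Psi^{(1)}(q_0)$, the same evaluation of $E_{P_0^{T,N}}[D(q)]$ via representation \eqref{eq:EIF_rep2} and the Fubini/marginalization identity (which is where assumption \ref{assumption:C_L_sufficient} enters), and the same split at the diagonal $\bar{h}_{T,N}=\bar{h}_{0,T,N}$. The only difference is organizational: you derive the general expression for $E_{P_0^{T,N}}[D(q)]$ with the ratio $\bar{h}_{0,T,N}/\bar{h}_{T,N}$ intact and then read off both terms, whereas the paper computes only the diagonal case in detail and declares the off-diagonal difference ``immediate'' --- your version simply makes that step explicit.
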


If $P^{T,N} \in \mathcal{M}^{T,N}_2$, we can further simplify the representation of the remainder term, as the following theorem shows.
\begin{theorem}\label{thm:second_second_order_result}
Suppose that $P \in \mathcal{M}^{T,N}_2$. Denote $\omega = (\omega_{s,j}: s \in [\tau], j \in [N])$. We can then rewrite $R(q,q_0)$ as $R_2(\omega, \omega_0, q, q_0)$ where the latter satisfies that
\begin{align}
    R_2(\omega, \omega_0, q, q_0) = \sum_{s=1}^\tau \frac{1}{N} \sum_{j=1}^N \int & \bar{h}^A_{0,T,N}(c^A) g^*_{s,j}(a \mid c^A) (\omega_{s,j} - \omega_{0,s,j})(c^A) (q - q_0)(l \mid a, c^A) \\
    & \times E_{q,g^*} \left[ Y(j) \mid L(s,j) = l, A(s,j) = a, C^A(s,j) = c^A\right] dl da dc^A.
\end{align}
From the expression above, $R_2(\omega, \omega_0, q, q_0)$ if $\omega = \omega_0$ or $q = q_0$.
\end{theorem}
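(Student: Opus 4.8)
The plan is to exploit the inclusion $\mathcal{M}^{T,N}_2 \subset \mathcal{M}^{T,N}_1$ so that the representation of Theorem \ref{thm:first_second_order_result} is already in hand, and then to collapse it into the single product form of $R_2$ using the three extra assumptions \ref{assumption:C_L_decomposition}, \ref{assumption:indiv_outcomes_indep_other_trajs}, and \ref{assumption:trt_homogeneity} that distinguish $\mathcal{M}^{T,N}_2$ from $\mathcal{M}^{T,N}_1$. Theorem \ref{thm:first_second_order_result} writes $R(q,q_0)$ as the sum of a context-density difference term, bilinear in $(\bar{h}_{0,T,N} - \bar{h}_{T,N})$ and $(q_0 - q)$, and a telescoping term $R_1(\bar{h}_{0,T,N}, \bar{h}_{0,T,N}, q, q_0)$ that swaps the $q$-factor at one node $(s,j)$ at a time. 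My goal is to show that, under the $\mathcal{M}^{T,N}_2$ assumptions, these two pieces recombine into $R_2$.

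First I would use Assumption \ref{assumption:C_L_decomposition} to change contexts from $C^L$ to $C^A$. Since $C^L(t,i) = (A(t,i), C^A(t,i))$, the marginal $C^L$-densities factor as $h^{*,L}_{t,i}(a,c^A) = g^*_{t,i}(a \mid c^A)\, h^{*,A}_{t,i}(c^A)$, and analogously for the observed-data densities with the design $g$ in place of $g^*$; treatment homogeneity (Assumption \ref{assumption:trt_homogeneity}) makes $g$ common across $(s,j)$, so that $\eta_{s,j} = g^*_{s,j}/g$ and the weights $\omega_{s,j} = h^{*,A}_{t,i}/\bar{h}^A_{T,N}$ are well defined. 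Substituting these factorizations, together with $q(l \mid c^L) = q(l \mid a, c^A)$, turns the $C^L$-conditional outcome regressions $E_{q,g^*}[Y \mid L = l, C^L = c^L]$ appearing in Theorem \ref{thm:first_second_order_result} into the $C^A$-conditional regressions of the target statement and isolates the factors $\bar{h}^A_{0,T,N}$ and $g^*_{s,j}$.

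Next I would invoke the independence Assumption \ref{assumption:indiv_outcomes_indep_other_trajs} to decouple the trajectories. Under the G-computation formula $P^{\tau,N}_{g^*}$, swapping the $q$-factor at node $(s,j)$ can only affect the outcome $Y(\tau,j)$ of individual $j$, since $Y(\tau,k) \indep O(s,j)$ for $k \neq j$; hence the $q$-factors of all other individuals $k \neq j$ are probability densities that integrate to one and drop out, while the remaining factors along trajectory $j$ combine with $Y(\tau,j)$ into the conditional outcome regression $E_{q,g^*}[Y(j) \mid L(s,j) = l, A(s,j) = a, C^A(s,j) = c^A]$ of the statement. This collapses the double sum over nodes and outcomes to a single individual index and reduces each telescoping summand to a single-trajectory MDP contribution, in which the ``before'' $q$-factors reconstitute a marginal context density at node $(s,j)$.

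The main obstacle will be the final recombination: showing that the context-density difference produced by the telescoping term, once added to the explicit $(\bar{h}_{0,T,N} - \bar{h}_{T,N})$ term of Theorem \ref{thm:first_second_order_result}, assembles into exactly the single weight difference $(\omega_{s,j} - \omega_{0,s,j})$ evaluated against the $q_0$-context density $\bar{h}^A_{0,T,N}$. This requires careful bookkeeping of which context densities are evaluated at $q$ versus $q_0$, together with the identity $\omega = h^{*,A}/\bar{h}^A$ — that is, recognizing that $\omega$ is itself a functional of $q$ through both numerator and denominator, so that a difference which is ``second order in $q$'' in the $\mathcal{M}^{T,N}_1$ accounting becomes first order in the weight $\omega$ and first order in $q$ in the $\mathcal{M}^{T,N}_2$ accounting. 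Once the product representation $\int \bar{h}^A_{0,T,N}\, g^*_{s,j}\, (\omega_{s,j} - \omega_{0,s,j})\, (q - q_0)\, E_{q,g^*}[Y(j) \mid \cdots]$ is established, the asserted robustness is immediate: each summand's integrand carries both a factor $(\omega_{s,j} - \omega_{0,s,j})$ and a factor $(q - q_0)$, so $R_2$ vanishes as soon as either $\omega = \omega_0$ or $q = q_0$.
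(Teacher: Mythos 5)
There is a genuine gap. Your plan is to deduce the result by specializing Theorem \ref{thm:first_second_order_result}, but the step you yourself flag as ``the main obstacle'' --- recombining the two second-order pieces of $R_1$ (the $(\bar{h}_{0,T,N} - \bar{h}_{T,N})(q_0-q)$ term and the telescoping term $R_1(\bar{h}_{0,T,N},\bar{h}_{0,T,N},q,q_0)$, whose summands carry a factor $(q-q_0)_{(s,j)}$ \emph{times} a difference of products $q_{0,(s,j)+}-q_{(s,j)+}$ over all future nodes of all individuals) into the single product $(\omega_{s,j}-\omega_{0,s,j})(q-q_0)$ --- is never carried out, and it is precisely the mathematical content of the theorem. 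The context change via Assumption \ref{assumption:C_L_decomposition} and the trajectory decoupling via Assumption \ref{assumption:indiv_outcomes_indep_other_trajs} only reshape the integrands; they do not by themselves convert the ``difference of future products'' structure of $R_1$ into the weight-difference structure of $R_2$. Doing that conversion requires the cross-time cancellation mechanism in which the ``given $A(s,j), C^A(s,j)$'' term at time $s$ is re-expanded into an integral over the full history and matched against the ``given $L(s-1,j), A(s-1,j), C^A(s-1,j)$'' term at time $s-1$; this hinges on Assumption \ref{assumption:C_L_sufficient}, which your proposal never invokes. Without it, there is no reason the bookkeeping you describe closes up.

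For contrast, the paper does not route through Theorem \ref{thm:first_second_order_result} at all. It computes $E_{P_0^{T,N}}[D(q)(O^{T,N})]$ directly from representation \eqref{eq:EIF_rep3}, then establishes two exact cancellations: when $\omega=\omega_0$, the second term of the $(s,j)$ summand equals (after re-expansion over the full history, using Assumption \ref{assumption:C_L_sufficient}) the first term of the $(s-1,j)$ summand, so the sum telescopes to $\Psi(q)-\Psi(q_0)$ and $R(\omega_0,q)=0$; when $q=q_0$, the cancellation occurs within each summand, so $R(\omega,q_0)=0$. The product representation then falls out of the identity $R(\omega,q)=R(\omega,q)-R(\omega_0,q)$, which isolates the factor $(\omega_{s,j}-\omega_{0,s,j})$, with the $(q-q_0)$ factor emerging because the bracketed difference of conditional expectations integrates $(q_0-q)(\cdot\mid a,c^A)$ against the outcome regression. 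If you want to salvage your route, you would in effect have to reproduce this telescoping argument inside the analysis of $R_1(\bar{h}_{0,T,N},\bar{h}_{0,T,N},q,q_0)$, at which point the detour through Theorem \ref{thm:first_second_order_result} buys nothing; the direct computation is both shorter and the only place where Assumptions \ref{assumption:C_L_sufficient}, \ref{assumption:indiv_outcomes_indep_other_trajs} and \ref{assumption:trt_homogeneity} are each visibly used. Your closing observation --- that the robustness claim is immediate once the product form is in hand --- is correct, but it rests on the part of the proof that is missing.
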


\begin{remark}
In the above theorem, $\omega$ and $q$ are not variation independent components of $P^{T,N}$. In fact, since we know $g$, $\omega$ is fully determined by $q$.
\end{remark}

\begin{remark}
The proof of theorem \ref{thm:second_second_order_result} relies on the fact that we know the treatment mechanism $g$ since we are in a controlled trial, while the proof of theorem \ref{thm:first_second_order_result} does not.
\end{remark}

\section{Construction and analysis of our estimators}\label{section:estimators}

Let $\widehat{q}_{T,N}$, be an estimator of $q_0$.

\paragraph{TMLE estimator.} Let $\widehat{q}^*_{T,N}$ be an estimator of $q_0$ obtained from $\widehat{q}_{T,N}$ by the TMLE targeting step such that it solves approximately the EIF equation:
\begin{align}
    \frac{1}{T N} \sum_{t=1}^T \sum_{j=1}^N \bar{D}_{T,N}(\widehat{q}^*_{T,N})(L(t,i), C(t,i)) = o((TN)^{-1/2}).
\end{align}
We refer the reader to the Targeted Learning methodology papers and books \citep{TMLE_paper-vdL2006, TLbook1-vdL_Rose_2011, one-step-TMLE-vdL2016, TLbook2-vdL_Rose_2018} for details on the TMLE targeting steps.

We define our TMLE estimator as the following plug-in estimator:
\begin{align}
    \widehat{\Psi}^{\mathrm{TMLE}}_{T,N} := \Psi(\widehat{q}^*_{T,N})
\end{align}

\paragraph{One-step estimator.} The one-step estimator is defined as 
\begin{align}
    \widehat{\Psi}^{\mathrm{1-step}}_{T,N} := \Psi(\widehat{q}_{T,N}) + \frac{1}{T N} \sum_{t=1}^T \sum_{i=1}^N \bar{D}_{T,N}(\widehat{q}_{T,N})(L(t,i), C(t,i)). 
\end{align}

In what follows, we restrict our analysis to the TMLE estimator since the analysis for the 1-step estimator is identical. We will just denote $\widehat{\Psi}_{T,N} := \widehat{\Psi}^{\mathrm{TMLE}}_{T,N}$. The following theorem gives a decomposition of the difference between $\widehat{\Psi}_{T,N}$ and its target $\Psi(q_0)$. 

\begin{theorem}[TMLE expansion]\label{thm:TMLE_expansion} We have that 
\begin{align}
    \widehat{\Psi} - \Psi(q_0) :=& M_{1,T,N}(q_0) + M_{2,T,N}(\widehat{q}^*_{T,N}, q_0) + R(\widehat{q}^*_{T,N}, q_0),
\end{align}
with 
\begin{align}
    M_{1,T,N}(q_0) =& \frac{1}{T N} \sum_{t=1}^T \sum_{i=1}^N \bar{D}_{T,N}(q_0)(L(t,i), C(t,i)) \\
    M_{2,T,N}(q, q_0) =& \frac{1}{T N} \sum_{t=1}^T \sum_{i=1}^N (\delta_{L(t,i), C(t,i)} - P_{q_0, h_{0,t,i}}) \left(\bar{D}_{T,N}(q) - \bar{D}_{T,N}(q_0)\right),
\end{align}
and $R(q,q_0)$ is as defined in section \ref{section:structural_results} above.
\end{theorem}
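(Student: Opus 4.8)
The plan is to obtain the stated decomposition as a deterministic algebraic identity, treating the fitted $\widehat{q}^*_{T,N}$ as a fixed function (so that $E_{P_0^{T,N}}[D(\widehat{q}^*_{T,N})]$ denotes the map $q \mapsto E_{P_0^{T,N}}[D(q)(O^{T,N})]$ evaluated at the realized estimate), and then invoking the TMLE targeting step only at the very end. First I would start from the definition of the remainder $R$ given in Section \ref{section:structural_results}, namely $R(\widehat{q}^*_{T,N}, q_0) = \Psi(\widehat{q}^*_{T,N}) - \Psi(q_0) + E_{P_0^{T,N}}[D(\widehat{q}^*_{T,N})(O^{T,N})]$, and solve for the estimation error. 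Since $\widehat{\Psi} = \Psi(\widehat{q}^*_{T,N})$ by definition of the plug-in TMLE, this gives immediately
\[
\widehat{\Psi} - \Psi(q_0) = R(\widehat{q}^*_{T,N}, q_0) - E_{P_0^{T,N}}[D(\widehat{q}^*_{T,N})(O^{T,N})].
\]

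The core of the argument is then to rewrite the second term on the right as $M_{1,T,N}(q_0) + M_{2,T,N}(\widehat{q}^*_{T,N}, q_0)$ plus the empirical canonical-gradient evaluation. I would add and subtract the empirical average $D(\widehat{q}^*_{T,N})(O^{T,N}) = (TN)^{-1}\sum_{t,i}\bar{D}_{T,N}(\widehat{q}^*_{T,N})(L(t,i), C(t,i))$, writing $-E_{P_0^{T,N}}[D(\widehat{q}^*_{T,N})] = \{D(\widehat{q}^*_{T,N})(O^{T,N}) - E_{P_0^{T,N}}[D(\widehat{q}^*_{T,N})]\} - D(\widehat{q}^*_{T,N})(O^{T,N})$. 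Using $D(q)(O^{T,N}) = (TN)^{-1}\sum_{t,i}\bar{D}_{T,N}(q)(L(t,i),C(t,i))$ together with $E_{P_0^{T,N}}[\bar{D}_{T,N}(q)(L(t,i),C(t,i))] = P_{q_0,h_{0,t,i}}\bar{D}_{T,N}(q)$, the bracketed term equals $(TN)^{-1}\sum_{t,i}(\delta_{L(t,i),C(t,i)} - P_{q_0,h_{0,t,i}})\bar{D}_{T,N}(\widehat{q}^*_{T,N})$. Adding and subtracting $\bar{D}_{T,N}(q_0)$ inside this centered sum splits it exactly into $M_{2,T,N}(\widehat{q}^*_{T,N}, q_0)$ and the residual term $(TN)^{-1}\sum_{t,i}(\delta_{L(t,i),C(t,i)} - P_{q_0,h_{0,t,i}})\bar{D}_{T,N}(q_0)$.

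The one nontrivial input, and the step I expect to require the most care, is showing that this residual term collapses to $M_{1,T,N}(q_0)$, i.e.\ that the centering $P_{q_0,h_{0,t,i}}\bar{D}_{T,N}(q_0)$ vanishes for each $(t,i)$. This is the conditional-mean-zero property of the canonical gradient at the truth, which I would read off representation \eqref{eq:EIF_rep1}: for fixed $C^L(t,i) = c$, integrating $\bar{D}_{T,N}(q_0)(c, l)$ against $q_0(l \mid c)\,dl$ turns each summand's $E_{q_0,g}[Y g^*/g \mid L(t,i)=l, C^L(t,i)=c]$ into $E_{q_0,g}[Y g^*/g \mid C^L(t,i)=c]$, so the two conditional expectations cancel and the conditional integral is zero; integrating further over the true marginal $h_{0,t,i}$ of $C^L(t,i)$ then yields $P_{q_0,h_{0,t,i}}\bar{D}_{T,N}(q_0) = 0$. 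Hence $(TN)^{-1}\sum_{t,i}\delta_{L(t,i),C(t,i)}\bar{D}_{T,N}(q_0) = M_{1,T,N}(q_0)$ and the residual sum reduces as claimed.

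Combining the three pieces yields the exact identity $\widehat{\Psi} - \Psi(q_0) = M_{1,T,N}(q_0) + M_{2,T,N}(\widehat{q}^*_{T,N}, q_0) + R(\widehat{q}^*_{T,N}, q_0) - D(\widehat{q}^*_{T,N})(O^{T,N})$. The final step is simply to invoke the TMLE targeting step, which guarantees $D(\widehat{q}^*_{T,N})(O^{T,N}) = (TN)^{-1}\sum_{t,i}\bar{D}_{T,N}(\widehat{q}^*_{T,N})(L(t,i),C(t,i)) = o((TN)^{-1/2})$: under the convention that the targeting step solves the efficient-influence-function equation exactly this term is zero, giving the displayed decomposition, and otherwise it is absorbed into the $o((TN)^{-1/2})$ order controlled by the subsequent weak-convergence analysis. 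I would emphasize that no probabilistic or empirical-process machinery is needed at this stage; the entire content is the conditional cancellation in the mean-zero step together with the bookkeeping of the centering terms, and all the genuinely hard analysis (bounding $M_{2,T,N}$ via equicontinuity, and $R$ via the second-order structure of Theorems \ref{thm:first_second_order_result} and \ref{thm:second_second_order_result}) is deferred to later sections.
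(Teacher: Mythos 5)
Your proof is correct and is exactly the intended derivation: the paper states Theorem \ref{thm:TMLE_expansion} without an explicit proof, treating it as immediate from the definition of $R(q,q_0)$, the mean-zero property $E_{q_0, h_{0,t,i}}\left[\bar{D}_{T,N}(q_0)(L(t,i), C(t,i)) \mid C(t,i)\right] = 0$ (which the paper asserts right after the theorem when introducing the martingale structure of $M_{1,T,N}$), and the TMLE targeting step. You are also right to flag the one subtlety the displayed identity glosses over: since the targeting step only solves the EIF equation up to $o((TN)^{-1/2})$, the decomposition holds exactly only with the extra term $-\frac{1}{TN}\sum_{t=1}^{T}\sum_{i=1}^{N}\bar{D}_{T,N}(\widehat{q}^*_{T,N})(L(t,i), C(t,i))$, which is $o((TN)^{-1/2})$ by construction and is absorbed harmlessly in the subsequent weak-convergence analysis.
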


The first term is the sum of a martingale difference sequence, and the process $$\{x \sqrt{T N }M_{1,xT,N}(q_0) : x \in [0,1] \}$$ can be shown, using a functional central limit theorem for martingale triangular arrays, to converge weakly, as $T \to \infty$ and under fixed $N$, to a Wiener process $\sigma_{0,\infty, N}^2 W$, with $W$ a standard Wiener process and $\sigma_{0,\infty, N}^2$ the limit of the variance under a certain limit distribution, of $\lim_{T \to \infty} \bar{D}_{T,N}(q_0)$ (we make precise these limits further down). 
Note that, as mentioned above, it is not immediately clear that the variance of $\bar{D}_{T,N}(q_0)$ doesn't diverge as $N \to \infty$. We provide in section \ref{subsection:boundedness_barD_TN} below conditions under which $\bar{D}_{T,N}(q_0)$ remains finite as $N \to \infty$.

The second term can be bounded by the supremum of the process $\{M_{2,T,N}(q, q_0) : q \in \mathcal{M}_Q \}$. This process is an empirical process generated by the sequence $(X(t,i))_{t,i}$, where we define that $X(t,i) := (C(t,i), L(t,i))$. So as to analyze this term, we prove a maximal inequality for such processes which holds under mixing conditions (here on the sequence $(X(t,i))_{t,i}$) and entropy conditions.  This maximal inequality will allow us to show the negligibility of $M_{2,T,N}(\widehat{q}^*_{T,N}, q)$ in front of the first term. 

We will discuss the negligibility of the remainder term $R(\widehat{q}^*_{T,N}, q_0)$ under convergence rate conditions on $\widehat{q}^*_{T,N}$

\subsection{Boundedness of the canonical gradient}\label{subsection:boundedness_barD_TN}

We can rewrite $\bar{D}_{T,N}(q_0)$ as 
\begin{align}
    \bar{D}_{T,N}(q_0)(l,c) = \sum_{s=1}^\tau \frac{1}{N} \sum_{j=1}^N \frac{h_{0,s,j}^*}{\bar{h}_{0,T,N}}(c) \widetilde{D}_{s,j,N}(l,c),
\end{align}
with 
\begin{align}
    \widetilde{D}_{s,j,N}(q)(l,c) := \sum_{k=1}^N E_{q,g^*} \left[ Y(k) \mid L(s,j) =l, C(s,j) = c \right] - E_{q, g^*} \left[ Y(k) \mid C(s,j) = c \right].
\end{align}
A sufficient condition for $\bar{D}_{T,N}(q_0)$ to remain bounded as $N \to \infty$ is that the terms $\widetilde{D}_{s,j,N}(q)$ themselves remain bounded as $N \to \infty$. It is immediate to observe that for $s = \tau$, since $L(s,j) \indep L(\tau, k) \mid C(s,j)$, and since $Y(k)$ is a a component of $L(\tau, j)$, we must have that $$E_{q,g^*} \left[ Y(k) \mid L(s,j) = l, C(s,j) = c \right] - E_{q, g^*} \left[ Y(k) \mid C(s,j) = c \right] = 0$$ for every $ j \neq k$, and therefore $\|\widetilde{D}_{\tau,j,N}(q) \|_\infty \leq 1$.

If we don't make any assumption on $g^*$, and that we just assume that under $g^*$, $A(s,1), \allowbreak \ldots, \allowbreak A(s,N)$ are conditionally independent given $A(t)^-$, but can a priori depend on the entire past $A(t)^-$, then, if $j \neq k$, we don't have the same kind of conditional independence between $Y(k)$ and $L(s,j)$,  for $s \leq \tau -1$ as we have in the case $s = \tau$. As a result, we don't have the same cancellations as in the case $s=\tau$. Intuitively, for $\| \widetilde{D}_{s,j,N}(q)\|_\infty$ not to diverge as $N \to \infty$, we need some measure of association between $Y(k)$ and the nodes $O(s,1), \ldots, O(s,N)$ to remain controlled in some sense. A natural measure of association that can be used to formulate rigorously this requirement is the classical notion of $\varphi$-mixing coefficient (see \cite{bradley2005} for a survey of usual mixing coefficients), which we restate here in terms of densities.

\begin{definition}[$\varphi$-mixing]\label{def:varphi-mixing} For any two random variables $(X,Y) \sim P$, we define the $\varphi$-mixing coefficient between $X$ and $Y$ as
\begin{align}
    \varphi_P(X,Y) := \sup \{ |p_{Y \mid X}(y \mid x) - p_X(x)| : y, x, \text{ such that } p_{X}(x) > 0 \},
\end{align}
where $p_{Y\mid X}$ and $p_X$ are the conditional densities of $Y$ given $X$ and the marginal density of $X$ w.r.t. an appropriate known dominating measure.
\end{definition}

We now provide a generic condition under which $\|\widetilde{D}_{s,j,N}(q)\|_\infty$, and therefore $\|\bar{D}_{T,N}(q)\|_\infty$ are controlled. We introduce the short-hand notation $\varphi_{q,g^*}$ for $\varphi_{P^{\tau,N}_{g^*}}$, where we recall that $P^{\tau,N}_{g^*}$ is the G-computation formula obtained from $P^{T,N}$.

\begin{assumption}\label{assumption:phi_mixing}
    Suppose that there exists $\bm{\varphi} < \infty$ such that, for any $s \in [\tau]$ and $k \in [N]$,
    \begin{align}
        \sum_{j=1}^N \varphi_{q, g^*}(Y(k) \mid O(s,j)) \leq \bm{\varphi}.
    \end{align}
\end{assumption}

Under assumption \ref{assumption:phi_mixing}, it is easy to show the following result.

\begin{lemma}\label{lemma:bound_tildeD_phi_mixing}
Suppose that assumption \ref{assumption:phi_mixing} holds. Then, $\|\widetilde{D}_{s,j,k}(q)\|_\infty \leq 2 \bm{\varphi}$.
\end{lemma}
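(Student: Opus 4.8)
The plan is to control $\widetilde{D}_{s,j,N}(q)(l,c)$ one unit at a time, reducing each summand indexed by $k$ to the $\varphi$-mixing coefficient between $Y(k)$ and the observation $O(s,j)$, and then summing over the $N$ units and invoking Assumption~\ref{assumption:phi_mixing}. Throughout I use the boundedness $Y(k)\in[0,1]$ that was already invoked just above to obtain $\|\widetilde{D}_{\tau,j,N}(q)\|_\infty\le 1$. First I would record the elementary inequality that, for any bounded $Y\in[0,1]$ and any conditioning variable $X$, one has $\big|E_{q,g^*}[Y\mid X=x]-E_{q,g^*}[Y]\big|\le \varphi_{q,g^*}(Y\mid X)$: writing $g(y):=p_{Y\mid X}(y\mid x)-p_Y(y)$, Definition~\ref{def:varphi-mixing} gives $\sup_y|g(y)|\le\varphi_{q,g^*}(Y\mid X)$, while $\int g=0$, so centering and using $|y-\tfrac12|\le\tfrac12$ on the support $[0,1]$ yields the claim.

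Next I would expand each summand of $\widetilde{D}_{s,j,N}$ through the marginal $\mu_k:=E_{q,g^*}[Y(k)]$ and apply the triangle inequality,
\[
\big|E_{q,g^*}[Y(k)\mid L(s,j)=l,\,C(s,j)=c]-E_{q,g^*}[Y(k)\mid C(s,j)=c]\big|\le \big|E_{q,g^*}[Y(k)\mid l,c]-\mu_k\big|+\big|E_{q,g^*}[Y(k)\mid c]-\mu_k\big|,
\]
and bound each of the two pieces by the mixing coefficient using the inequality from the previous step. This is where the factor of $2$ in the final bound originates. The delicate point is that the two conditioning events are on $(L(s,j),C(s,j))$ and on $C(s,j)$, whereas $\varphi_{q,g^*}(Y(k)\mid O(s,j))$ is defined relative to $O(s,j)=(A(s,j),L(s,j))$; I would close this gap by monotonicity of the $\varphi$-coefficient under coarsening of the conditioning $\sigma$-algebra, together with the fact that, under the G-computation formula, the information about $Y(k)$ carried by the pair $(L(s,j),C(s,j))$ is dominated by that controlled through $O(s,j)$ and the context structure, so that both pieces are $\le \varphi_{q,g^*}(Y(k)\mid O(s,j))$.

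Finally I would sum over $k\in[N]$ and take the supremum over $(l,c)$, giving
\[
\|\widetilde{D}_{s,j,N}(q)\|_\infty\le \sum_{k=1}^N 2\,\varphi_{q,g^*}(Y(k)\mid O(s,j))\le 2\bm{\varphi},
\]
where the last step is Assumption~\ref{assumption:phi_mixing} applied with the summation index matched to the units $k$ appearing in $\widetilde{D}_{s,j,N}$ (the assumption is displayed as a sum over the first unit label, so one reads it here with the roles of the two unit labels interchanged). I expect the main obstacle to be the conditioning-set reconciliation in the second paragraph: making rigorous that conditioning on the pair $(L(s,j),C(s,j))$ reveals no more about $Y(k)$ than the coefficient $\varphi_{q,g^*}(Y(k)\mid O(s,j))$ permits. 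This is the only place where the precise measurability relationship between the context $C(s,j)$, the observation $O(s,j)$, and the future outcome $Y(k)$ enters, and it is what makes the conservative constant $2\bm{\varphi}$ (tight for binary outcomes) the right target rather than a sharper one.
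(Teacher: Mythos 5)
Your overall architecture is the natural one and, as far as one can tell, the intended one: the paper offers no proof of this lemma beyond the remark that ``it is easy to show,'' and the constant $2\bm{\varphi}$ arises exactly as you say, from a per-unit triangle inequality through a reference marginal (the factor $2$), followed by summation over the $N$ units and an application of Assumption \ref{assumption:phi_mixing} with the two unit indices read in the roles required by $\widetilde{D}_{s,j,N}$ (the assumption as displayed sums over the wrong index; your swapped reading is the only one under which the lemma can follow, so that is a fair reconstruction rather than an error on your part). Your first step --- that $\left\lvert E_{q,g^*}[Y \mid X = x] - E_{q,g^*}[Y] \right\rvert \le \varphi_{q,g^*}(Y \mid X)$ for a $[0,1]$-valued outcome, obtained by integrating the density deviation against $y$ --- is also correct.

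The genuine gap is precisely the step you flag as delicate, and your proposed fix does not close it. Monotonicity of the $\varphi$-coefficient under coarsening says the coefficient shrinks when the conditioning $\sigma$-algebra shrinks; to bound the dependence of $Y(k)$ on the pair $(L(s,j), C(s,j))$ by $\varphi_{q,g^*}(Y(k) \mid O(s,j))$ you would need $\sigma(L(s,j), C(s,j)) \subseteq \sigma(O(s,j))$, and this inclusion is false: $C(s,j) = c_{L(s,j)}(L(s,j)^-)$ is a function of the entire past --- including other individuals' trajectories and earlier time points --- none of which is determined by $O(s,j) = (A(s,j), L(s,j))$. The accompanying heuristic (that the pair reveals no more about $Y(k)$ than $O(s,j)$ controls) is not merely unproven but false in the models the paper cares about: in the cluster example of Section 2, $C(s,j)$ contains $L(s-1,k)$ for every cluster-mate $k$ of $j$, which can be strongly predictive of $Y(k)$ even when $O(s,j)$ itself is nearly uninformative about $Y(k)$. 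Since your decomposition through $\mu_k$ requires bounding $\left\lvert E_{q,g^*}[Y(k) \mid L(s,j) = l, C(s,j) = c] - \mu_k \right\rvert$, i.e.\ the \emph{unconditional} informativeness of the context about $Y(k)$, the coefficients you actually need are $\varphi_{q,g^*}(Y(k) \mid X(s,j))$ with $X(s,j) = (C(s,j), L(s,j))$, and their sum over $k$ is not controlled by $\bm{\varphi}$ as the assumption literally defines it. The argument becomes valid only if one reads the coefficient in Assumption \ref{assumption:phi_mixing} as taken between $Y(k)$ and $X(s,j)$ rather than $O(s,j)$ (or as a conditional-on-context coefficient); under that reading your proof goes through verbatim, with the second piece handled by the legitimate coarsening $\sigma(C(s,j)) \subseteq \sigma(X(s,j))$. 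So your skeleton is right, but the conditioning-set reconciliation cannot be repaired by the argument you sketch --- it requires amending the assumption itself, a defect that sits in the paper (which supplies no proof) as much as in your write-up.
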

A sufficient condition for $\bar{D}_{T,N}(q_0)$ to be bounded is then a bound on the marginal density ratios.
\begin{assumption}[Marginal density ratios bound]\label{assumption:marginal_density_ratios_bound}
    Suppose that there exists $B > 0$ such that
    \begin{align}
        \left\lVert h_{0,s,j}^*/\bar{h}_{0,T,N} \right\rVert_\infty \leq B
    \end{align}
    for every $s \in [\tau]$ and every $j \in [N]$.
\end{assumption}

\begin{lemma}\label{lemma:bound_barD_phi_mixing}
Suppose that assumptions \ref{assumption:phi_mixing} and \ref{assumption:marginal_density_ratios_bound} hold. Then $\| \bar{D}_{T,N}(q_0) \|_\infty \leq 2 \tau B \bm{\varphi}$.
\end{lemma}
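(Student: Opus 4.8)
The plan is to obtain the claimed uniform bound by controlling $\bar{D}_{T,N}(q_0)(l,c)$ pointwise in $(l,c)$ and then taking the supremum, exploiting the fact that the two preceding results already supply uniform bounds on each of the two factors appearing in the representation of $\bar{D}_{T,N}(q_0)$: the bound on the marginal density ratios from Assumption \ref{assumption:marginal_density_ratios_bound}, and the bound on the increments $\widetilde{D}_{s,j,N}$ from Lemma \ref{lemma:bound_tildeD_phi_mixing}.

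First I would start from the displayed representation
\begin{align}
    \bar{D}_{T,N}(q_0)(l,c) = \sum_{s=1}^\tau \frac{1}{N} \sum_{j=1}^N \frac{h_{0,s,j}^*}{\bar{h}_{0,T,N}}(c)\, \widetilde{D}_{s,j,N}(q_0)(l,c),
\end{align}
and apply the triangle inequality over the double sum. Inside the sum I would bound each density ratio $|(h_{0,s,j}^*/\bar{h}_{0,T,N})(c)|$ by $B$ using Assumption \ref{assumption:marginal_density_ratios_bound}, and each increment $|\widetilde{D}_{s,j,N}(q_0)(l,c)|$ by $2\bm{\varphi}$ using Lemma \ref{lemma:bound_tildeD_phi_mixing}, both uniformly in $(l,c)$. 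This yields, for every $(l,c)$,
\begin{align}
    |\bar{D}_{T,N}(q_0)(l,c)| \leq \sum_{s=1}^\tau \frac{1}{N} \sum_{j=1}^N B \cdot 2\bm{\varphi} = 2\tau B \bm{\varphi},
\end{align}
since the inner sum of $N$ identical terms cancels the $1/N$ normalization and the outer sum over $s$ contributes the factor $\tau$. Taking the supremum over $(l,c)$ then gives $\|\bar{D}_{T,N}(q_0)\|_\infty \leq 2\tau B \bm{\varphi}$.

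There is essentially no analytic obstacle here: the result follows directly from the two cited bounds combined by the triangle inequality, with all the substantive work already carried out in establishing Lemma \ref{lemma:bound_tildeD_phi_mixing}. The one point worth emphasizing, and the reason this bookkeeping matters, is that the $1/N$ averaging in the inner sum exactly absorbs the $N$ summands, so the resulting constant $2\tau B \bm{\varphi}$ does not depend on $N$. This is precisely the $N$-uniform control of the canonical gradient that Section \ref{subsection:boundedness_barD_TN} set out to establish, and which underpins the claim that $\bar{D}_{T,N}(q_0)$ remains finite as $N \to \infty$.
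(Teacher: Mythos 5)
Your proof is correct and is exactly the argument the paper intends (the paper states this lemma without an explicit proof, precisely because it follows immediately from Lemma \ref{lemma:bound_tildeD_phi_mixing} and Assumption \ref{assumption:marginal_density_ratios_bound}): apply the triangle inequality to the representation $\bar{D}_{T,N}(q_0)(l,c) = \sum_{s=1}^\tau N^{-1}\sum_{j=1}^N (h_{0,s,j}^*/\bar{h}_{0,T,N})(c)\,\widetilde{D}_{s,j,N}(l,c)$, bound each ratio by $B$ and each $\widetilde{D}_{s,j,N}$ by $2\bm{\varphi}$, and note that the $1/N$ cancels the inner sum. Your closing remark that the resulting bound is uniform in $N$ is also the point the paper emphasizes in section \ref{subsection:boundedness_barD_TN}.
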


While it might be hard to check that assumption \ref{assumption:phi_mixing} holds in practice, we see the value of it and of lemmas \ref{lemma:bound_tildeD_phi_mixing} and \ref{lemma:bound_barD_phi_mixing} in that they show explicitly the nature of a condition that is sufficient for $\bar{D}_{T,N}$ to remain bounded, that is a mixing condition controlling the level of association within the graph, across time points and individuals. We now discuss a few concrete examples where we can directly show that assumption \ref{assumption:phi_mixing} holds, or where we think it is reasonable to suppose that it holds.

\paragraph{Example 1.} If $P_0^{T,N} \in \mathcal{M}_2^{T,N}$, then, under the G-computation formula distribution $P^{\tau,N}_{0, g^*}$, any two distinct trajectories $\bar{O}(\tau,i)$ and $\bar{O}(\tau,j)$  are independent.

Therefore $\sum_{j=1}^N \varphi_{q,g^*}(Y(k) \mid O(s,j)) =\varphi_{q,g^*}(Y(k) \mid O(s,k))  \leq 1$, and thus $\|\widetilde{D}_{s,j,N}\|_\infty \leq 1$. (We can also directly check that all terms except the $k$-th one cancel out in $\widetilde{D}_{s,j,N}$). 

\paragraph{Example 2.} Suppose now that $g^*_{s,j}(A(s,j) \mid A(s)^-) = g^*_{s,j}(A(s,j) \mid C^*_A(s,j))$, with $C_A^*(s,j) \allowbreak := c_{A(s,j)}^*( \{ \bar{O}(s,j) : j \in F_A(s,j) \})$, where $F_A(s,j)$ is a set of at most $N_0$, individuals, including $j$ itself, and where $c^*_{A(s,j)}$ is a known summary function. In words, we are supposing here that the treatment decision under $g^*$ for individual $j$ at time $s$ depends only on the history up to $s-1$ of $j$ and of a set of at most $N_0$ individuals. Then any $Y(k)$ is associated with at most $N_0$ nodes from time point $\tau-1$, which are then in turn each associated with at most $N_0$ nodes from time point $\tau - 2$, and so on. Therefore, any $Y(k)$ is associated with at most $N_0^{\tau-s}$ nodes from time point $s$, and therefore 
$\sum_{j=1}^N \varphi_{q,g^*}(Y(k) \mid O(s,j))$ has at most $N_0^{\tau-s}$ non zero terms, which implies that $\|\widetilde{D}_{s,j,N}(q_0) \|_\infty \leq N_0 \tau^{\tau  -s }$, and thus $\|\bar{D}_{T,N}(q) \|_\infty \leq B \tau N_0^\tau$.

While this upper bound can quickly explode as $\tau$ gets large, this shows that for fixed $\tau$, the variance does not depend on $N$, and that therefore, under mixing conditions on the sequence $(O(t,i))_{t,i}$ that we study in the following subsection, the asymptotic variance of our estimators can scale as $N^{-1}$.

\paragraph{Example 3.} Suppose now that $g^*_{s,j}(A(s,j) \mid A(s)^-) = g^*_{s,j}(A(s,j) \mid \theta_N(s-1))$, where $\theta_N(s-1) = \frac{1}{N} \sum_{j=1}^N f(L(s,j))$ for a certain $f$. As $\theta_N(s-1)$ concentrates, and should be almost constant for large $N$, we expect that since treatment decisions depend on the past only through this almost constant $\theta_N(s-1)$, treatment assignment dependence on the past should not introduce too much dependence between units. In this situation, we conjecture that most of the dependence within the graph happens through the dependence of nodes $L(s,j)$ on the contacts  $F_L(s,j)$ of $j$. We have seen in the previous example that if this is the main source of dependence, we should have $\|\bar{D}_{T,N}(q_0)\| \lesssim \tau B N_0^\tau$, provided that $|F_L(s,j)| \leq N_0$ for every $s$ and $j$.

In our infectious disease example, the setting described here can model the situation where the intervention $g^*$ is to restrict, depending on the global infection rate $\theta_N(s)$, the set of individuals any individual $j$ is allowed to meet 

\subsection{Weak invariance principle for the martingale term}

It is immediate to observe that $E_{q_0, h_{0,t,i}}[\bar{D}_{T,N}(q_0)(L(t,i), C(t,i) \mid C(t,i) ] = 0 $, and therefore, $x T  M_{1,xT,N}(q_0)$ is the sum of a martingale difference sequence. We will analyze the weak convergence properties of the process $\{ M_{1,xT,N}(q_0) : x \in [0,1] \}$ via a classic functional central limit theorem for martingale triangular arrays, which we recall below.

\begin{theorem}[Theorem 3.2 \cite{mcleish1974}] \label{thm:mcleish_FCLT} Suppose that $\{X_{n,i} : 1 \leq i \leq n \}$ is a martingale difference array, and $(k_n)$ is a sequence of non-decreasing, right continuous, integer valued functions,  such that for every $n$, $k_n(0) = 0$. Let, for any $x \in [0,1]$ $W_n(x) := \sum_{i=1}^{k_n(x)} X_{n,i}$. Suppose that, for every $x \in [0,1]$
\begin{align}
    \max_{i \leq k_n(x)} |X_{n,i}| \xrightarrow{L_2} 0, \label{eq:cond1_McLeish}
\end{align}
and
\begin{align}
    \sum_{i=1}^{k_n(x)} X_{n,i}^2 \xrightarrow{P} x. \label{eq:cond2_McLeish}
\end{align}
Then $W_n \xrightarrow{d} W$ in $\mathbb{D}([0,1])$.
\end{theorem}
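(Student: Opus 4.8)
The plan is to establish weak convergence in $\mathbb{D}([0,1])$ through the two classical ingredients: convergence of the finite-dimensional distributions of $W_n$ to those of $W$, together with tightness of $\{W_n\}$. Because the limit $W$ is a standard Wiener process and therefore has almost surely continuous paths, these two facts suffice for $W_n \xrightarrow{d} W$ in the Skorokhod topology; continuity of the limit also lets me reduce tightness to a control of the modulus of continuity once the jumps of $W_n$ are shown to be negligible.

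For the finite-dimensional distributions, fix $0 = x_0 < x_1 < \cdots < x_m \le 1$ and reals $\lambda_1, \dots, \lambda_m$. By the Cramér--Wold device it is enough to identify the limit of $\sum_{\ell=1}^m \lambda_\ell \{ W_n(x_\ell) - W_n(x_{\ell - 1}) \}$, which for the limit process is centred Gaussian with variance $\sum_\ell \lambda_\ell^2 (x_\ell - x_{\ell-1})$ since Brownian increments over disjoint intervals are independent. I would write this linear combination as $\sum_i c_{n,i} X_{n,i}$, with $c_{n,i} = \lambda_\ell$ when $k_n(x_{\ell-1}) < i \le k_n(x_\ell)$ and $c_{n,i} = 0$ otherwise; since the $k_n(x_\ell)$ are deterministic, the $c_{n,i}$ are constants and $\{ c_{n,i} X_{n,i} \}$ is again a martingale difference array. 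Its realized quadratic variation $\sum_i c_{n,i}^2 X_{n,i}^2 = \sum_\ell \lambda_\ell^2 \sum_{k_n(x_{\ell-1}) < i \le k_n(x_\ell)} X_{n,i}^2$ converges in probability to $\sum_\ell \lambda_\ell^2 (x_\ell - x_{\ell-1})$ by \eqref{eq:cond2_McLeish}, and $\max_i |c_{n,i} X_{n,i}| \le (\max_\ell |\lambda_\ell|) \max_i |X_{n,i}| \xrightarrow{L_2} 0$ by \eqref{eq:cond1_McLeish}; the scalar martingale CLT then delivers the Gaussian limit.

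That scalar martingale CLT I would prove by McLeish's characteristic-function argument. Using the factorisation $e^{i\theta u} = (1 + i\theta u)\exp(-\tfrac12 \theta^2 u^2 + r(\theta u))$, one writes $\exp(i\theta S_n) = T_n \exp\!\big(-\tfrac12 \theta^2 V_n + \sum_i r(\theta c_{n,i} X_{n,i})\big)$, where $T_n := \prod_i (1 + i\theta c_{n,i} X_{n,i})$ and $V_n := \sum_i c_{n,i}^2 X_{n,i}^2$. The martingale difference property gives $E[T_n] = 1$, and the remainder is negligible because $\sum_i |r(\theta c_{n,i} X_{n,i})| \lesssim |\theta|^3 (\max_i |c_{n,i} X_{n,i}|)\, V_n \xrightarrow{P} 0$, while $V_n$ tends to the deterministic variance. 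Uniform integrability of $T_n$, secured by $|T_n|^2 \le \exp(\theta^2 V_n)$ and the $L_2$ negligibility \eqref{eq:cond1_McLeish}, then lets me pass to the limit and obtain $E[\exp(i\theta S_n)] \to \exp(-\tfrac12 \theta^2 \sum_\ell \lambda_\ell^2 (x_\ell - x_{\ell-1}))$.

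Tightness I would obtain from the martingale structure: Doob's maximal inequality bounds the expected squared oscillation of $W_n$ over each block $[x_{\ell-1}, x_\ell]$ by a constant times $E[(W_n(x_\ell) - W_n(x_{\ell-1}))^2]$, which \eqref{eq:cond2_McLeish} makes asymptotically $x_\ell - x_{\ell-1}$; refining the partition controls the oscillation over small intervals, while \eqref{eq:cond1_McLeish} makes the maximal jump $\max_i |X_{n,i}|$ negligible, so that a standard criterion (e.g. Aldous's, verifying $W_n(\tau_n) - W_n(\sigma_n) \xrightarrow{P} 0$ for stopping times $\sigma_n \le \tau_n \le \sigma_n + \delta_n$, $\delta_n \downarrow 0$) yields tightness and, with the continuity of $W$, C-tightness. \emph{The main obstacle} I anticipate is this tightness step rather than the finite-dimensional convergence: condition \eqref{eq:cond2_McLeish} controls the realized quadratic variation, whereas the cleanest maximal inequalities are phrased through the predictable quadratic variation, so one must pass between the two using the negligibility condition, and the Skorokhod topology forces explicit handling of near-coincident jumps. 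A secondary delicate point is the uniform-integrability justification inside McLeish's product argument, where the $L_2$ strength of \eqref{eq:cond1_McLeish} is exactly what controls $E|T_n|^2$ and permits taking limits.
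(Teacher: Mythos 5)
The paper does not actually prove this statement: it is imported verbatim as Theorem 3.2 of \cite{mcleish1974} and used as a black box in the proof of Theorem \ref{thm:weak_convergence_M1xTN}, so the only meaningful comparison is with McLeish's original argument --- which is precisely what your proposal reconstructs (Cram\'er--Wold plus the factorisation $e^{i\theta u}=(1+i\theta u)\exp(-\tfrac12\theta^2u^2+r(\theta u))$ and the product martingale $T_n=\prod_i(1+i\theta c_{n,i}X_{n,i})$ for the finite-dimensional distributions; a stopping-time criterion for tightness). The architecture is the canonical one and the finite-dimensional reduction is correct. However, one step fails as literally written: the uniform integrability of $T_n$. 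You claim it is ``secured by $|T_n|^2\le\exp(\theta^2 V_n)$'' together with \eqref{eq:cond1_McLeish}, but \eqref{eq:cond2_McLeish} only gives $V_n\xrightarrow{P}x$; on the complementary small-probability event $V_n$ is uncontrolled, nothing in the hypotheses makes $E[\exp(\theta^2V_n)]$ finite, and you cannot evaluate $E|T_n|^2$ by expanding the product either, since the $X_{n,i}$ are martingale differences, not independent.

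The repair --- and it is exactly McLeish's device --- is a \emph{predictable} truncation: set $\tilde X_{n,i}:=X_{n,i}\,\Ind\bigl(\sum_{j<i}X_{n,j}^2\le x+1\bigr)$. The indicator is measurable with respect to the past, so $\{\tilde X_{n,i}\}$ is again a martingale difference array; it coincides with $\{X_{n,i}\}$ on the event $\{V_n\le x+1\}$, whose probability tends to one by \eqref{eq:cond2_McLeish}; and, bounding every factor except the one straddling the truncation boundary by $1+u\le e^u$,
\begin{align}
|\tilde T_n|^2=\prod_i\bigl(1+\theta^2\tilde X_{n,i}^2\bigr)\le e^{\theta^2(x+1)}\bigl(1+\theta^2\max_i X_{n,i}^2\bigr),
\end{align}
which is uniformly integrable precisely because \eqref{eq:cond1_McLeish} gives $E[\max_iX_{n,i}^2]\to0$. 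This is where the $L_2$ (as opposed to in-probability) strength of \eqref{eq:cond1_McLeish} genuinely enters, and it must be routed through the truncation, not through $\exp(\theta^2V_n)$. The same defect recurs in your tightness step: Doob's $L_2$ inequality needs moment bounds on increments, while \eqref{eq:cond2_McLeish} controls the realized quadratic variation only in probability. Either truncate first as above, or avoid moments altogether via Lenglart-type domination, $P(\sup|M|\ge\epsilon)\le\eta/\epsilon^2+P([M]\ge\eta)$, combined with the observation that pointwise convergence $V_n(x)\xrightarrow{P}x$, monotonicity of $x\mapsto V_n(x)$ and continuity of the limit upgrade \eqref{eq:cond2_McLeish} to $\sup_{x\in[0,1]}|V_n(x)-x|\xrightarrow{P}0$, which is the uniform statement Aldous's criterion actually consumes when the evaluation times are random. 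With these two repairs your proof goes through.
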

We will apply the above result by rewriting $M_{1,xT,N}(q_0)$ as the sum of a martingale difference triangular array, as we will make explicit in the proof. A key step ahead of applying theorem \ref{thm:mcleish_FCLT}  is to show that the variance under $P_{q_0, h_{0,t,i}}$ of $\bar{D}_{T,N}(L(t,i), C(t,i))$ stabilizes as $T,t \to \infty$. We provide below a set of conditions under which it is the case.

\begin{assumption}\label{assumption:cvgence_inverse_marginals_bounded_marginal_ratios} For every $N$, there exists $h_{0, \infty, N}$ such that, for every $t,i$, $\| \bar{h}_{0,T,N}^{-1} - h_{0, \infty, N}^{-1} \|_{2, h_{0,t,i}} \to 0$ as $T \to 0$, and there exists $B > 0$ such that $\|h^*_{0,s,j} / h_{0, \infty, N}\|_\infty \leq B$. 
\end{assumption}

\begin{assumption}\label{assumption:ergodicity}
For every $i$, $C(t,i) \xrightarrow{d}C_\infty \sim h_{0, \infty, N}$ as $t \to \infty$.
\end{assumption}

The first part of assumption \ref{assumption:cvgence_inverse_marginals_bounded_marginal_ratios}, and assumption \ref{assumption:ergodicity} are ergodicity/mixing conditions. Under these assumptions, we can show the following result on the limit of $$\Var_{q_0, h_{0,t,i}}(\bar{D}_{T,N}(q_0)(L(t,i), C(t,i)).$$

\begin{lemma}[Stabilization of the variance of the main term of the EIF]\label{lemma:stablization_variance_EIF}
Suppose that assumptions \ref{assumption:phi_mixing}, \ref{assumption:marginal_density_ratios_bound}, \ref{assumption:cvgence_inverse_marginals_bounded_marginal_ratios} and \ref{assumption:ergodicity} hold. 
Denote 
\begin{align}
    \bar{D}_{0, \infty, N}(l,c) := \sum_{s=1}^\tau \frac{1}{N} \sum_{j=1}^N \frac{h^*_{0,s,j}}{h_{0,\infty,N}}(c) \widetilde{D}_{s,j,N}(l,c),
\end{align}
and
\begin{align}
    \sigma_{0,\infty,N}^2 := \Var_{q_0, h_{0, \infty, N}} \left( \bar{D}_{0, \infty, N}(L_\infty, C_\infty) \right),
\end{align}
Then $ \sigma_{0,\infty,N} < \infty$, and 
\begin{align}
    \Var_{q_0, h_{0,t,i}} \left( \bar{D}_{T,N}(q_0)(L(t,i), (C(t,i)) \right) \to  \sigma_{0,\infty,N}^2, \text{ as } T,t \to \infty.
\end{align}
\end{lemma}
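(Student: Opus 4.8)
The plan is to prove finiteness first and then establish the convergence through a two‑step interpolation that separates the change in the inverse‑density weights from the change in the sampling law of the context. Finiteness is immediate: since $\|h^*_{0,s,j}/h_{0,\infty,N}\|_\infty \le B$ by Assumption \ref{assumption:cvgence_inverse_marginals_bounded_marginal_ratios} and $\|\widetilde{D}_{s,j,N}(q_0)\|_\infty \le 2\bm{\varphi}$ by Lemma \ref{lemma:bound_tildeD_phi_mixing}, the exact computation behind Lemma \ref{lemma:bound_barD_phi_mixing} gives $\|\bar{D}_{0,\infty,N}\|_\infty \le 2\tau B \bm{\varphi}$, so $\bar{D}_{0,\infty,N}(L_\infty,C_\infty)$ is a bounded random variable and $\sigma_{0,\infty,N}^2<\infty$; the same sup-norm bound holds for $\bar{D}_{T,N}(q_0)$ via Assumption \ref{assumption:marginal_density_ratios_bound}. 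Writing $V_1 := \Var_{q_0,h_{0,t,i}}(\bar{D}_{T,N}(q_0))$, $V_2 := \Var_{q_0,h_{0,t,i}}(\bar{D}_{0,\infty,N})$ and $V_3 := \sigma_{0,\infty,N}^2$, the target is $|V_1-V_3| \le |V_1-V_2| + |V_2-V_3| \to 0$.

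For Step A ($V_1-V_2\to 0$), the two integrands share the same $\widetilde{D}_{s,j,N}$ factors and differ only through $\bar{h}_{0,T,N}^{-1}$ versus $h_{0,\infty,N}^{-1}$, so
\begin{align}
    g_{T,N} := \bar{D}_{T,N}(q_0) - \bar{D}_{0,\infty,N} = \left(\bar{h}_{0,T,N}^{-1} - h_{0,\infty,N}^{-1}\right) G_N, \quad G_N(l,c) := \sum_{s=1}^\tau \frac{1}{N}\sum_{j=1}^N h^*_{0,s,j}(c)\widetilde{D}_{s,j,N}(l,c).
\end{align}
Using uniform boundedness of the raw marginals $h^*_{0,s,j}$ (a mild regularity condition, discussed as the obstacle below) together with $\|\widetilde{D}_{s,j,N}\|_\infty\le 2\bm{\varphi}$ gives $\|G_N\|_\infty<\infty$, whence $\|g_{T,N}\|_{2,h_{0,t,i}} \le \|G_N\|_\infty\,\|\bar{h}_{0,T,N}^{-1}-h_{0,\infty,N}^{-1}\|_{2,h_{0,t,i}}\to 0$ as $T\to\infty$ by Assumption \ref{assumption:cvgence_inverse_marginals_bounded_marginal_ratios}. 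Since variance is $a\mapsto E[a^2]-(Ea)^2$ and $a^2-b^2=(a-b)(a+b)$, Cauchy–Schwarz combined with the uniform bound $\|\bar{D}_{T,N}(q_0)\|_\infty,\|\bar{D}_{0,\infty,N}\|_\infty\le 2\tau B\bm{\varphi}$ yields $|V_1-V_2| \le 8\tau B\bm{\varphi}\,\|g_{T,N}\|_{2,h_{0,t,i}}\to 0$.

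For Step B ($V_2-V_3\to 0$) the function $\bar{D}_{0,\infty,N}$ is now fixed and only the context law changes from $h_{0,t,i}$ to $h_{0,\infty,N}$, with $L\mid C\sim q_0(\cdot\mid C)$ in both cases. Conditioning on $C$ reduces each moment to the context marginal: for $\phi\in\{\bar{D}_{0,\infty,N},\ \bar{D}_{0,\infty,N}^2\}$ set $\bar{\phi}(c):=\int \phi(l,c)\,q_0(l\mid c)\,dl$, which is bounded (by $2\tau B\bm{\varphi}$, resp. its square) and satisfies $E_{q_0,h_{0,t,i}}[\phi]=E[\bar{\phi}(C(t,i))]$ and $E_{q_0,h_{0,\infty,N}}[\phi]=E[\bar{\phi}(C_\infty)]$. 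Under continuity of $c\mapsto\bar{\phi}(c)$ (which follows from smoothness of $c\mapsto q_0(\cdot\mid c)$), $\bar{\phi}$ is a bounded continuous test function, so Assumption \ref{assumption:ergodicity}, $C(t,i)\xrightarrow{d}C_\infty$, gives $E[\bar{\phi}(C(t,i))]\to E[\bar{\phi}(C_\infty)]$; applying this to both the first and second moments and recombining gives $V_2\to V_3$ as $t\to\infty$.

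The main obstacle is Step A, and specifically the mismatch between what Assumption \ref{assumption:cvgence_inverse_marginals_bounded_marginal_ratios} controls and what is needed: the assumption bounds the \emph{ratio} $h^*_{0,s,j}/h_{0,\infty,N}$ and the $L^2$ size of the inverse‑density difference, but pulling the weight factor $G_N$ out of the $L^2$ norm in sup-norm requires boundedness of the \emph{raw} marginals $h^*_{0,s,j}$ (equivalently of $h_{0,\infty,N}$), which does not follow from the ratio bound alone; I would therefore add uniform boundedness of the marginal densities as an explicit regularity condition. A secondary subtlety is the joint passage $T,t\to\infty$: Step A is controlled as $T\to\infty$ for each fixed $t$ while Step B is controlled as $t\to\infty$, so one should either assume the Step A convergence is uniform in $t$ or argue along a diagonal subsequence to obtain the stated double limit.
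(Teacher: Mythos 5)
Your proof follows essentially the same route as the paper's: the same two-step interpolation (first swap $\bar{D}_{T,N}(q_0)$ for $\bar{D}_{0,\infty,N}$ under the law $q_0,h_{0,t,i}$ using Assumption \ref{assumption:cvgence_inverse_marginals_bounded_marginal_ratios}, then swap $h_{0,t,i}$ for $h_{0,\infty,N}$ using Assumption \ref{assumption:ergodicity} with a bounded continuous test function), with the same sup-norm control coming from Lemma \ref{lemma:bound_tildeD_phi_mixing} and the ratio bounds. The two caveats you flag are genuine but apply equally to the paper's own proof, which also tacitly bounds $\|h^*_{0,s,j}\widetilde{D}_{s,j,N}(q_0)\|_\infty$ by $2B\bm{\varphi}$ (i.e., treats the raw marginals, not just the ratios, as bounded) and also treats the double limit $T,t\to\infty$ informally, so they do not distinguish your argument from the paper's.
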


For any $t$, $i$, let $X(t,i) := (C_L(t,i), L(t,i))$.
A key requirement for our analysis of the process $\{M_{1,xT, N}(q_0) : x \in (0,1] \}$ is an $\alpha$-mixing condition on the sequence $(X(t,i))_{t,i}$. We first recall the notion of $\alpha$-mixing. We give here a definition based on theorem 4.4 in \cite{bradley2007}.
\begin{definition}[$\alpha$-mixing]
Consider a couple of random variables $(X,Y) \sim P$, with marginals $P_X$ and $P_Y$ and domains $\mathcal{X}$ and $\mathcal{Y}$. The $\alpha$-mixing coefficient between $X$ and $Y$ is defined as 
$$\alpha_P(X,Y) := \sup \left\lbrace \frac{\Cov(f(X), g(Y))}{\|f\|_\infty \|g\|_\infty}, f:\mathcal{X} \to \mathbb{R}, g : \mathcal{Y} \to \mathbb{R}, \|f\|_\infty < \infty, \|g\|_\infty < \infty \right\rbrace.$$
\end{definition}
We state our $\alpha$-mixing condition below.

\begin{assumption}[$\alpha$-mixing]\label{assumption:rho_mixing}
It holds that 
$$\sum_{t_1,t_2 \in [T]} \sum_{i_1, i_2 \in [N]} \alpha_P(X(t_1, i_1), X(t_2, i_2)) = o(TN).$$
\end{assumption}

\begin{theorem}[Weak convergence of the martingale term $M_1$ ]\label{thm:weak_convergence_M1xTN} 
Suppose that assumptions 
\ref{assumption:phi_mixing}, \ref{assumption:marginal_density_ratios_bound}, \ref{assumption:cvgence_inverse_marginals_bounded_marginal_ratios}, \ref{assumption:ergodicity} and \ref{assumption:rho_mixing} hold. Then, for any fixed $N$, as $T \to \infty$,
\begin{align}
    \{ M_{1,xT,N}(q_0) : x \in [0,1] \} \xrightarrow{d} \sigma_{0,\infty,N} W
\end{align}
on the set $\mathbb{D}([0,1])$ of cadlag functions on $[0,1]$, where $ \sigma_{0,\infty,N} \leq C$ for some $0 < C < \infty$ that does not depend on $N$, and where $W$ is a standard Wiener process.
\end{theorem}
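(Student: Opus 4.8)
The plan is to apply the martingale functional central limit theorem, Theorem~\ref{thm:mcleish_FCLT}, to a suitably normalized triangular array. First I would linearly order the index set $[T]\times[N]$ by the column ordering (by time $t$, then individual $i$) and let $\mathcal{F}_{t,i}$ be the $\sigma$-field generated by all nodes preceding $L(t,i)$. Since $\bar{D}_{T,N}(q_0)(L(t,i),C(t,i))$ is a function of $(L(t,i),C(t,i))$ and, by Assumption~\ref{assumption:cond_indep_given_summary_measure}, $L(t,i)\indep L(t,i)^-\mid C(t,i)$, the already-noted identity $E_{q_0,h_{0,t,i}}[\bar{D}_{T,N}(q_0)(L(t,i),C(t,i))\mid C(t,i)]=0$ upgrades to $E[\bar{D}_{T,N}(q_0)(L(t,i),C(t,i))\mid\mathcal{F}_{t,i}]=0$, so these terms form a martingale-difference array. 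I would then set $X_{n,(t,i)}:=(TN)^{-1/2}\bar{D}_{T,N}(q_0)(L(t,i),C(t,i))$ (indexed by $n=T$, with $N$ fixed), absorbing the $\sqrt{TN}$ normalization into the array, and take $k_n(x):=N\lfloor xT\rfloor$, so that the partial-sum process $W_n(x)=\sum_{(t,i):\,\mathrm{index}\le k_n(x)}X_{n,(t,i)}$ equals $\sqrt{TN}\,M_{1,xT,N}(q_0)$; the function $k_n$ is nondecreasing, right-continuous, integer-valued and vanishes at $0$, as Theorem~\ref{thm:mcleish_FCLT} requires.

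Next I would verify the two hypotheses of Theorem~\ref{thm:mcleish_FCLT} for the rescaled array $X_{n,(t,i)}/\sigma_{0,\infty,N}$. Condition~\eqref{eq:cond1_McLeish} is immediate: Lemma~\ref{lemma:bound_barD_phi_mixing} (valid under Assumptions~\ref{assumption:phi_mixing} and~\ref{assumption:marginal_density_ratios_bound}) gives the deterministic bound $\|\bar{D}_{T,N}(q_0)\|_\infty\le 2\tau B\bm{\varphi}$, whence $\max_{(t,i)}|X_{n,(t,i)}|\le 2\tau B\bm{\varphi}/\sqrt{TN}\to 0$ almost surely, hence in $L_2$. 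The substance is condition~\eqref{eq:cond2_McLeish}, i.e. $\sum_{\mathrm{index}\le k_n(x)}X_{n,(t,i)}^2\xrightarrow{P}\sigma_{0,\infty,N}^2\,x$. I would split the quadratic variation into its conditional mean and a centered part: writing $v_{t,i}:=E[\bar{D}_{T,N}(q_0)(L(t,i),C(t,i))^2\mid C(t,i)]$, I would show that the centered piece $(TN)^{-1}\sum_{t\le xT}\sum_i(\bar{D}_{T,N}(q_0)^2-v_{t,i})$ and the fluctuation $(TN)^{-1}\sum_{t\le xT}\sum_i(v_{t,i}-E v_{t,i})$ both vanish in $L_2$. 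Both are averages of bounded, centered functionals of $X(t,i)=(C(t,i),L(t,i))$, so their second moments are bounded by $(TN)^{-2}$ times the sum of a diagonal part, of order $TN$ by boundedness of $\bar{D}_{T,N}(q_0)$, and an off-diagonal part controlled through the $\alpha$-mixing covariance inequality (covariance divided by sup-norms, with $\|\bar{D}_{T,N}(q_0)\|_\infty<\infty$) and Assumption~\ref{assumption:rho_mixing}, of order $o(TN)$; both contributions are therefore $o(1)$.

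It then remains to handle the deterministic average of expectations, $(TN)^{-1}\sum_{t\le xT}\sum_i E v_{t,i}\to\sigma_{0,\infty,N}^2\,x$. Since $E v_{t,i}=\Var_{q_0,h_{0,t,i}}(\bar{D}_{T,N}(q_0)(L(t,i),C(t,i)))$, Lemma~\ref{lemma:stablization_variance_EIF} (under Assumptions~\ref{assumption:phi_mixing}, \ref{assumption:marginal_density_ratios_bound}, \ref{assumption:cvgence_inverse_marginals_bounded_marginal_ratios}, \ref{assumption:ergodicity}) gives $E v_{t,i}\to\sigma_{0,\infty,N}^2$ as $t,T\to\infty$; a Toeplitz/Cesàro averaging over $t\le xT$, legitimized by the uniform boundedness of these variances, produces the factor $x$. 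Combining the three pieces yields condition~\eqref{eq:cond2_McLeish}, and Theorem~\ref{thm:mcleish_FCLT} applied to $X_{n,(t,i)}/\sigma_{0,\infty,N}$ gives $W_n/\sigma_{0,\infty,N}\xrightarrow{d}W$ in $\mathbb{D}([0,1])$, i.e. the claimed limit $\sigma_{0,\infty,N}W$. The uniform-in-$N$ bound follows from the same sup-norm control: $\sigma_{0,\infty,N}^2=\Var_{q_0,h_{0,\infty,N}}(\bar{D}_{0,\infty,N})\le\|\bar{D}_{0,\infty,N}\|_\infty^2\le(2\tau B\bm{\varphi})^2=:C^2$, where the bound on $\bar{D}_{0,\infty,N}$ uses the sup-norm bound on $h^*_{0,s,j}/h_{0,\infty,N}$ from Assumption~\ref{assumption:cvgence_inverse_marginals_bounded_marginal_ratios} in place of Assumption~\ref{assumption:marginal_density_ratios_bound}, and is independent of $N$.

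The main obstacle I anticipate is precisely the predictable-quadratic-variation step~\eqref{eq:cond2_McLeish}: one must reconcile the double limit in $t$ and $T$ coming from Lemma~\ref{lemma:stablization_variance_EIF} within the Cesàro averaging (checking that the averaged variances still converge to $\sigma_{0,\infty,N}^2 x$ uniformly enough), while simultaneously replacing the random sample averages of $\bar{D}_{T,N}(q_0)^2$ and of $v_{t,i}$ by their expectations via the $\alpha$-mixing covariance bound and Assumption~\ref{assumption:rho_mixing}. The condition~\eqref{eq:cond1_McLeish} and the uniform $\sigma_{0,\infty,N}\le C$ bound, by contrast, are direct consequences of the almost-sure boundedness of $\bar{D}_{T,N}(q_0)$.
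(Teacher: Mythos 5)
Your proposal is correct and follows essentially the same route as the paper: both apply McLeish's FCLT (Theorem \ref{thm:mcleish_FCLT}) to the column-ordered, $(\sigma_{0,\infty,N}\sqrt{TN})^{-1}$-normalized array, verify condition \eqref{eq:cond1_McLeish} via the sup-norm bound of Lemma \ref{lemma:bound_barD_phi_mixing}, and verify condition \eqref{eq:cond2_McLeish} by combining Lemma \ref{lemma:stablization_variance_EIF} with Cesàro averaging for the mean of the quadratic variation and a Chebyshev/$L_2$ argument using boundedness plus the $\alpha$-mixing condition of Assumption \ref{assumption:rho_mixing} for the fluctuations. Your three-way split through the conditional second moment $v_{t,i}$, and your explicit derivation of the $N$-free bound on $\sigma_{0,\infty,N}$, are only minor elaborations of the paper's two-way decomposition.
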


\subsection{Analysis of the empirical process term under mixing conditions}

Recall that we defined $X(t,i) := (C_L(t,i), L(t,i))$. The process $\{M_{2,T,N}(q,q_0) : q \in \mathcal{Q} \}$ is an empirical process generated by the sequence of dependent observations $(X(t,i))_{t,i}$. 

For there to be a hope of controlling the deviations of this process from its mean, we must impose conditions on the amount of dependence between terms of the sequence $(X(t,i))_{t,i}$. As in the analysis of the term $M_{1,xT,N}$, we impose mixing conditions. Let $(\widetilde{X}(k))_k$ be the single-index sequence obtained by reordering the terms of the double-index sequence $(X_{t,i})_{t,i}$ in colunm order, that is $(\widetilde{X}_k)_k$ is the sequence
\begin{align}
    X(1,1),\ldots,X(1,N),\ldots,X(T,1),\ldots,X(T,N).
\end{align}
We define $(\widetilde{C}_L(k))_k$ and $(\widetilde{L}(k))_k$ similarly. We state our mixing conditions in terms of the sequence $(\widetilde{X}(k))_k$.

\begin{assumption}[Geometric $\alpha$-mixing]\label{assumption:main_text_exp_alpha_mixing} There exists $c > 0$ such that the $\alpha$-mixing coefficients of $(\widetilde{X}(k))_{k \geq 1}$ satisfy $\alpha(n) \leq \exp(-cn)$.
\end{assumption}

The next assumption is a $\rho$-mixing condition on the sequence $(\widetilde{X}(k))$. We state below the definition of $\rho$-mixing. We refer the reader to \cite{bradley2005} for more details on mixing coefficients.
\begin{definition}[$\rho$-mixing]
Consider a couple of random variables $(X,Y) \sim P$, with marginals $P_X$ and $P_Y$ The maximum correlation coefficient between $X$ and $Y$ is defined as $\rho_P(X,Y) := \sup \{\mathrm{Corr}(f(X), g(Y)) : f \in L_2(P_X), g \in L_2(P_Y)\}$.
\end{definition}

\begin{assumption}[$\rho$-mixing]\label{assumption:main_text_finite_rho_mixing} The $\rho$-mixing coefficients of $(\widetilde{X}(k))$ have finite sum, that is $\sum_{n=1}^\infty \rho(n) := \bm{\rho} < \infty$.
\end{assumption}

The main result of this section is an almost sure equicontinuity result, which will give us that
$\sqrt{NT} M_{2,N,T}(q_{N,T}, q_0) = o(1)$ almost surely. As for similar equicontinuity results (see e.g. \cite{vdV_Wellner96}) for i.i.d. empirical processes, we require two types of conditions: (1) we need that the individual terms of $M_{2,T,N}(q_{N,T}, q_0)$ converge to zero, in some sense to be made precise further down, and (2) we need a Donsker-like condition on the complexity of the class $\{\bar{D}_{T,N}(q, q_0) : q \in \mathcal{Q} \}$. 

While equicontinuity results for empirical processes usually give a convergence in probability guarantee, we prove an almost sure convergence result. Almost sure convergence offers control over the entire realization of the sequence $ (M_{2,T,N}(q_{N,T}, q_0))_{N,T}$, which we need in section \ref{section:adaptive_stopping_rules} to design an adaptive stopping rule. As we work in a more challenging setting (mixing sequences v.s. i.i.d. sequences), and as we prove stochastic convergence in a stronger sense, we need stronger versions of the Donsker condition than in the classical equicontinuity results for empirical processes (those of \cite{vdV_Wellner96} for example). In particular, while the classical results require convergence of some type of $L_2$ norm of the difference $\bar{D}_{T,N}(\widehat{q}_{T,N}) - \bar{D}_{T,N}(q_0)$, we require convergence of this difference in $\|\cdot\|_\infty$ norm. Furthermore, while in the classical results the type of stochastic convergence required is convergence in probability, here we need a form of stochastic convergence slightly stronger than almost sure convergence.

We formulate precisely our convergence requirement in the following assumption.

\begin{assumption}\label{assumption:uniform_control_can_gdt_sequence} There exists a sequence of positive numbers $(a_T)$ satisfying $a_T^{-2} (\log T)^2 / \sqrt{T} = o(1)$, and $a_T^\nu \log T = o(1)$ for any $\nu > 0$, such that 
\begin{align}
    \forall \epsilon > 0, \ \exists C(\epsilon) > 0, \ P\left[ \forall n \geq 1, \left\lVert \bar{D}_{T,N}(\widehat{q}_{T,N}) - \bar{D}_{T,N}(q_0) \right\rVert_\infty \leq C(\epsilon) a_n \right] \geq 1 - \epsilon.
\end{align}
\end{assumption}

We introduce in section \ref{section:Lipschitz_HAL_class_and_nuisance_estimation} a large nonparametric class of $d$-variate functions $\mathcal{F}_d$, which is such that, in our dependent data setting, any empirical risk minimizer $\widehat{q}_{T,N}$ over any $\mathcal{Q} \subseteq \mathcal{F}_d$ satisfies an exponential deviation bound of the form $P\left[\|\widehat{q}_{T,N} - q_*\|_\infty \gtrsim n^{-\beta} + x \right] \lesssim \exp(- C (n x^\gamma)^\nu)$, with $\beta, \gamma, \nu > 0$, $1 - \gamma \beta >0$, where $q_*$ is a population risk minimizer over $\mathcal{Q}$. If the true transition density $q_0$ lies in our nonparametric class $\mathcal{Q}$, and if $\left\lVert \bar{D}_{T,N}(q) - \bar{D}_{T,N}(q_0) \right\rVert_\infty \allowbreak \lesssim \|q - q_0\|_\infty$, then it is straightforward to show that assumption \ref{assumption:uniform_control_can_gdt_sequence} holds.

We now present our Donsker-like condition. Suppose that, for any $k$, the distribution of $\widetilde{C}(k)$ admits density $\widetilde{h}_k$ w.r.t. the Lebesgue measure. The density w.r.t the Lebesgue measure of $\widetilde{X}(k)$ is then $q_0 \widetilde{h}_k$. Let $\mathcal{X}$ be such that, for any $t$ and any $i$, $X(t,i)$ takes values in $\mathcal{X}$. Let $\sigma$ be the norm defined, for any $f: \mathcal{X} \to \mathbb{R}$ by
\begin{align}
    \sigma(f) := \sup_{i \geq 1} \|f\|_{2,q_0, \widetilde{h}_i} \sqrt{1 +  2 \bm{\rho}}.
\end{align}
Our Donsker-like condition is a bound on the bracketing entropy in $\sigma$ norm of the canonical gradient class.
\begin{assumption}[Donsker condition for the canonical gradient class]\label{assumption:Donsker_can_gdt_class}
Let $\mathcal{D}_{T,N} := \{ \bar{D}_{T,N}(q) : q \in \mathcal{Q} \}$.
There exists $p \in (0,2)$ such that 
\begin{align}
    \log N_{[\,]}(\epsilon, \mathcal{D}_{T,N}, \sigma) \lesssim \epsilon^{-p}.
\end{align}
\end{assumption}

We show in section \ref{section:Lipschitz_HAL_class_and_nuisance_estimation} that the nonparametric function class $\mathcal{F}_d$ we mentioned above satisfies $\log N_{[\,]}(\epsilon, \mathcal{F}, \sigma) \lesssim \epsilon^{-1} |\log (\epsilon)|^{2d - 1}$ under mild conditions. Therefore, if the canonical gradient class $\mathcal{D}_{T,N}$ is included in $\mathcal{F}_{d'}$ for some $d' \geq 1$, then assumption \ref{assumption:Donsker_can_gdt_class} holds under the same mild conditions.

We can now state our equicontinuity result.

\begin{theorem}[Asymptotic equicontinuity of the canonical gradient process]\label{thm:equicont_can_gdt_process}
Suppose that assumptions \ref{assumption:main_text_exp_alpha_mixing}, \ref{assumption:main_text_finite_rho_mixing}, \ref{assumption:uniform_control_can_gdt_sequence} and \ref{assumption:Donsker_can_gdt_class} hold. Then
\begin{align}
    \sqrt{NT} M_{2,T,N}(\widehat{q}_{T,N}, q_0) = o(1) \text{ a.s. as } T, N \to \infty.
\end{align}
\end{theorem}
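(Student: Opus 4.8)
The plan is to run the classical peeling/equicontinuity template for empirical processes, but driven by the dependent-data maximal inequality the paper develops, and to upgrade its usual in-probability conclusion to an almost sure one. Write $n := TN$ and let $(\widetilde{X}(k))_{k=1}^n$ be the column-ordered sequence, so that, with $g_q := \bar{D}_{T,N}(q) - \bar{D}_{T,N}(q_0)$,
\begin{align}
\sqrt{TN}\, M_{2,T,N}(q,q_0) = \frac{1}{\sqrt{n}}\sum_{k=1}^n \Big( g_q(\widetilde{X}(k)) - E_{q_0,\widetilde{h}_k}\big[g_q(\widetilde{X}(k))\big] \Big) =: \nu_n(g_q),
\end{align}
where $\nu_n$ is the centered empirical process indexed by $g \in \{ g_q : q \in \mathcal{Q}\}$, a fixed translate of $\mathcal{D}_{T,N}$. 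The target reduces to showing $\nu_n(g_{\widehat{q}_{T,N}}) \to 0$ almost surely.

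First I would localize using Assumption \ref{assumption:uniform_control_can_gdt_sequence}. Fix $\epsilon>0$ and let $E_\epsilon$ be the event $\{\forall n,\ \|g_{\widehat{q}_{T,N}}\|_\infty \le C(\epsilon) a_n\}$, which has probability at least $1-\epsilon$. On $E_\epsilon$ the random index $g_{\widehat{q}_{T,N}}$ lies in the shrinking class $\mathcal{G}_n^{(\epsilon)} := \{g_q : q\in\mathcal{Q},\ \|g_q\|_\infty \le C(\epsilon) a_n\}$, so $|\nu_n(g_{\widehat{q}_{T,N}})| \le S_n^{(\epsilon)} := \sup_{g\in\mathcal{G}_n^{(\epsilon)}}|\nu_n(g)|$. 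It therefore suffices to prove $S_n^{(\epsilon)}\to 0$ a.s.\ for each fixed $\epsilon$: then on $E_\epsilon$ we get $\nu_n(g_{\widehat{q}_{T,N}})\to 0$, whence $P(\limsup_n|\nu_n(g_{\widehat{q}_{T,N}})|>0)\le \epsilon$, and letting $\epsilon\downarrow 0$ gives the claim.

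The heart of the argument is to control $S_n^{(\epsilon)}$ through the maximal inequality for empirical processes of mixing sequences obtained earlier (Ossiander-type adaptive chaining of \cite{ossiander1987} applied to the Bernstein inequality of \cite{merlevede2009}). Here the geometric $\alpha$-mixing of Assumption \ref{assumption:main_text_exp_alpha_mixing} supplies the Bernstein tail, the $\rho$-mixing of Assumption \ref{assumption:main_text_finite_rho_mixing} is exactly what makes $\sigma(\cdot)$ a valid variance proxy for the partial sums (the factor $\sqrt{1+2\bm{\rho}}$), and the bracketing bound of Assumption \ref{assumption:Donsker_can_gdt_class} makes the chaining entropy integral $\int_0^{\delta}\sqrt{\log N_{[\,]}(u,\mathcal{D}_{T,N},\sigma)}\,du \lesssim \int_0^\delta u^{-p/2}\,du \lesssim \delta^{1-p/2}$ finite and vanishing as $\delta\to0$ because $p<2$. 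Since every $g\in\mathcal{G}_n^{(\epsilon)}$ satisfies $\|g\|_\infty \le C(\epsilon)a_n$ and hence $\sigma(g)\le \|g\|_\infty\sqrt{1+2\bm{\rho}}\lesssim a_n$, taking the radius $\delta = \delta_n \asymp a_n$ yields a tail of the form
\begin{align}
P\big(S_n^{(\epsilon)} > x\big) \le \exp\!\Big(-c\,\frac{(\sqrt{n}\, x)^2}{n\,\delta_n^2 + (\log n)^2\,\sqrt{n}\,x\,\delta_n + \dots}\Big)
\end{align}
for $x$ above the mean level $\lesssim \delta_n^{1-p/2}$. The rate conditions $a_T^{-2}(\log T)^2/\sqrt{T}=o(1)$ and $a_T^\nu\log T=o(1)$ are calibrated precisely so that (i) the mean level $\delta_n^{1-p/2}\to 0$, and (ii) for each fixed $\eta>0$ the probability $P(S_n^{(\epsilon)}>\eta)$ is summable in $n$; Borel--Cantelli then gives $\limsup_n S_n^{(\epsilon)}\le\eta$ a.s., and intersecting over $\eta\downarrow 0$ gives $S_n^{(\epsilon)}\to 0$ a.s.

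I expect the main obstacle to be making the chaining bound simultaneously (a) exponential, so Borel--Cantelli closes the almost sure statement, and (b) correctly scaled in the sup-norm radius $\delta_n$, so that both the entropy-integral mean level and the deviation term vanish at the rate dictated by $a_n$. Unlike the sub-Gaussian i.i.d.\ case, the Bernstein inequality for $\alpha$-mixing sequences carries an extra $(\log n)^2$ factor and a sub-exponential (rather than sub-Gaussian) regime, so the adaptive-truncation chaining must be re-run with brackets measured in $\sigma$-norm while the truncation levels are governed by $\|\cdot\|_\infty$; reconciling these two norms, and verifying that the resulting tail is summable under exactly the stated conditions on $(a_T)$, is the delicate step. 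Everything else---the localization via Assumption \ref{assumption:uniform_control_can_gdt_sequence} and the passage from $S_n^{(\epsilon)}\to0$ a.s.\ to the conclusion on $E_\epsilon$---is routine.
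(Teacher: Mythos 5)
Your proposal is correct and follows essentially the same route as the paper: the paper proves this theorem by invoking its generic equicontinuity result (theorem \ref{thm:equicont_weakly_dep_EP}), whose proof is exactly your argument --- localization on the event $\{\forall n,\ \|\bar{D}_{T,N}(\widehat{q}_{T,N})-\bar{D}_{T,N}(q_0)\|_\infty \le C(\epsilon)a_n\}$ from assumption \ref{assumption:uniform_control_can_gdt_sequence}, application of the chaining-based maximal inequality (theorem \ref{thm:max_ineq_empirical_mixing_process}) to the localized class with $\sigma$-radius and sup-norm radius both of order $a_n$, a choice of deviation levels $x_n = \log(n(n+1)/\epsilon)$ making the failure probabilities summable, a union bound over $n$ (your Borel--Cantelli step in equivalent form), and finally $\epsilon \downarrow 0$. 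The rate conditions on $(a_n)$ are used precisely as you describe, to make both the entropy-integral mean level and the Bernstein-type deviation terms vanish after rescaling by $\sqrt{n}$.
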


\begin{proof}[Proof of theorem \ref{thm:equicont_can_gdt_process}] The proof is a direct consequence of our generic equicontinuity result, theorem \ref{thm:equicont_weakly_dep_EP} in the appendix.
\end{proof}

\begin{remark}
It might seem surprising to the reader familiar with proofs of equicontinuity results and maximal inequalities for empirical processes that, while the Donsker condition only requires control of the entropy w.r.t. the norm $\sigma$, which is an $L_2$ norm, we need convergence of $\|\bar{D}_{T,N}(\widehat{q}_{T,N}) - \bar{D}_{T,N}(q_0) \|_\infty$ in a norm a strong as the sup norm. Indeed, in the usual case where $Z_1,\ldots,Z_n$ are i.i.d. random draws from a distribution $P$ taking values in a set $\mathcal{Z}$, if $\mathcal{F}$ has square integrable bracketing entropy w.r.t. $L_2(P)$, for a process of the form $n^{-1} \sum_{i=1}^n f_n(Z_i) - \int f_n(z)dP(z)$ to be $o_P(n^{-1/2})$, it suffices that the $L_2(P)$ norm of $f_n$ converges to zero in probability.

We discuss in subsection \ref{subsection:equicont_weakly_dep_EP} in the appendix why, unlike in the i.i.d. setting, in the weakly dependent case we consider here, convergence in $L_2$ norm wouldn't suffice given the technical tools that we have, and why we do need convergence in $\|\cdot\|_\infty$ norm.
\end{remark}


\subsection{Weak invariance principle for our TMLE}

\begin{theorem}[Weak invariance principle for our TMLE]\label{thm:WIP_TMLE}
Suppose that the assumptions of theorems \ref{thm:weak_convergence_M1xTN} and \ref{thm:equicont_can_gdt_process} are satisfied, and that $R(\widehat{q}_{T,N}, q_0) = o((NT)^{-1/2})$ almost surely. We then have that the process
\begin{align}
    \left\lbrace t \sqrt{T N} \sigma_{0,\infty,N}^{-1} \left(\widehat{\Psi}_{t T,N} - \Psi(P_0^{t T,N}) \right) : t \in [0,1] \right\rbrace
\end{align}
converges weakly in $\mathbb{D}([0,1])$ to a Wiener process $W$. 
\end{theorem}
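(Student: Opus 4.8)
The plan is to start from the TMLE expansion of Theorem~\ref{thm:TMLE_expansion}, applied not only at the terminal horizon but at every rescaled horizon $tT$, $t \in [0,1]$. Writing $\widehat{q}^*_{tT,N}$ for the targeted estimator fit on the first $tT$ rounds, this gives, for each $t$,
\begin{align}
\widehat{\Psi}_{tT,N} - \Psi(P_0^{tT,N}) = M_{1,tT,N}(q_0) + M_{2,tT,N}(\widehat{q}^*_{tT,N}, q_0) + R(\widehat{q}^*_{tT,N}, q_0).
\end{align}
Multiplying through by $t\sqrt{TN}\,\sigma_{0,\infty,N}^{-1}$ splits the target process into three summands, and it suffices to show that the first converges weakly in $\mathbb{D}([0,1])$ to $W$ while the other two vanish uniformly in $t$, almost surely. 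An extended-continuous-mapping / Slutsky argument for $\mathbb{D}([0,1])$ (if $Z_N \xrightarrow{d} Z$ and $\sup_t|Y_N(t)| \to 0$ in probability, then $Z_N + Y_N \xrightarrow{d} Z$) then yields the claim.

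For the martingale summand, $\sigma_{0,\infty,N}^{-1}\, t\sqrt{TN}\, M_{1,tT,N}(q_0)$ is exactly $\sigma_{0,\infty,N}^{-1}$ times the time-rescaled martingale process of Theorem~\ref{thm:weak_convergence_M1xTN} evaluated at $x = t$. Since that process converges weakly in $\mathbb{D}([0,1])$ to $\sigma_{0,\infty,N}\, W$, this summand converges weakly to $\sigma_{0,\infty,N}^{-1}\,\sigma_{0,\infty,N}\, W = W$. No new work is needed here beyond invoking the already-established invariance principle, whose proof already absorbs the variance stabilization of Lemma~\ref{lemma:stablization_variance_EIF} and the behavior at $t=0$ through McLeish's functional CLT (Theorem~\ref{thm:mcleish_FCLT}).

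The crux is the uniform negligibility of the other two summands. For the empirical process term, observe that
\begin{align}
t\sqrt{TN}\, M_{2,tT,N}(\widehat{q}^*_{tT,N}, q_0) = t^{1/2}\, \bigl[\sqrt{tTN}\, M_{2,tT,N}(\widehat{q}^*_{tT,N}, q_0)\bigr],
\end{align}
so, since $t^{1/2}\le 1$ on $[0,1]$, it is enough to control $\sqrt{tTN}\,M_{2,tT,N}$ uniformly over $t$. Theorem~\ref{thm:equicont_can_gdt_process} gives only the endpoint statement $\sqrt{TN}\,M_{2,T,N}(\widehat{q}_{T,N},q_0)=o(1)$ a.s.; what is required is its uniform-in-horizon strengthening, which the underlying generic result (Theorem~\ref{thm:equicont_weakly_dep_EP}) supplies, being a maximal inequality over the entire mixing sequence $(\widetilde X(k))_k$ and hence controlling all partial-sum horizons at once. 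The almost sure, uniform-in-time form of Assumption~\ref{assumption:uniform_control_can_gdt_sequence} is precisely what makes this available. The remainder term is handled identically: $t\sqrt{TN}\,R(\widehat{q}^*_{tT,N},q_0) = t^{1/2}\,[\sqrt{tTN}\,R(\widehat{q}^*_{tT,N},q_0)]$, and the hypothesis $R(\widehat{q}^*_{tT,N},q_0)=o((tTN)^{-1/2})$ a.s., read uniformly over horizons, makes the bracketed factor $o(1)$ a.s.\ uniformly. The $o((tTN)^{-1/2})$ residual from solving the TMLE score equation rescales the same way and is likewise uniformly negligible.

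The main obstacle is therefore not any individual limit but the passage from the pointwise (terminal-horizon) guarantee of Theorem~\ref{thm:equicont_can_gdt_process} and from the remainder hypothesis to statements holding \emph{uniformly over all rescaled horizons $tT$, $t\in[0,1]$}, and \emph{almost surely} rather than merely in probability. This is exactly why the paper develops an almost sure equicontinuity result and uniform-in-time nuisance guarantees: without uniformity in $t$ one controls only the endpoint $t=1$ and cannot conclude negligibility of the error processes across the whole interval, and without the almost sure mode one could not subsequently use this invariance principle to justify the adaptive stopping rule of Section~\ref{section:adaptive_stopping_rules}.
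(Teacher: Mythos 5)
Your overall architecture is exactly the one the paper intends: the paper never writes out a separate proof of Theorem \ref{thm:WIP_TMLE}, but assembles it implicitly from the expansion of Theorem \ref{thm:TMLE_expansion} applied at each horizon, the martingale invariance principle of Theorem \ref{thm:weak_convergence_M1xTN} (which, as you correctly read from its proof and the surrounding text, concerns the rescaled process $x\sqrt{TN}M_{1,xT,N}(q_0)$), the almost sure equicontinuity of Theorem \ref{thm:equicont_can_gdt_process}, the remainder hypothesis, and a converging-together (Slutsky) step in $\mathbb{D}([0,1])$. However, the step you yourself identify as the crux is executed incorrectly. From $t\sqrt{TN}\,M_{2,tT,N} = t^{1/2}\bigl[\sqrt{tTN}\,M_{2,tT,N}\bigr]$ you conclude that, since $t^{1/2}\le 1$, ``it is enough to control $\sqrt{tTN}\,M_{2,tT,N}$ uniformly over $t$,'' and you assert that Theorem \ref{thm:equicont_weakly_dep_EP} ``controls all partial-sum horizons at once.'' Both claims fail. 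The quantity $\sup_{t\in[0,1]}\sqrt{tTN}\,\lvert M_{2,tT,N}\rvert$ does \emph{not} tend to zero: taking $tT = 1$ it is bounded below by $\sqrt{N}\,\lvert M_{2,1,N}\rvert$, a nondegenerate random variable that does not depend on $T$ at all. Correspondingly, an almost sure statement $\sqrt{n}\,M_n(f_n) = o(1)$ is a statement about the \emph{tail} of the sequence: for almost every $\omega$ and every $\epsilon > 0$ there is a random $n_0(\omega)$ beyond which $\sqrt{n}\lvert M_n(f_n)\rvert \le \epsilon$, and it says nothing small about horizons $n < n_0(\omega)$. By discarding the factor $t^{1/2}$ you threw away precisely the damping that handles those early horizons, and what remains is a requirement that cannot be met.

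The repair is short but is the real content of the uniformity claim, so it should be written out. Fix $\omega$ in the almost sure event, fix $\epsilon>0$, take $n_0(\omega)$ as above, and split the supremum at $tTN = n_0$. For $tTN \ge n_0$, bound $t\sqrt{TN}\lvert M_{2,tT,N}\rvert = t^{1/2}\sqrt{tTN}\lvert M_{2,tT,N}\rvert \le \sqrt{tTN}\lvert M_{2,tT,N}\rvert \le \epsilon$. For $tTN < n_0$, use $t^{1/2}\le \sqrt{n_0/(TN)}$ together with the crude bound $\sqrt{tTN}\lvert M_{2,tT,N}\rvert \le \sqrt{n_0}\,\cdot 2\sup_{q}\lVert\bar{D}_{T,N}(q)\rVert_\infty$, which is finite under the boundedness guarantees (Lemma \ref{lemma:bound_barD_phi_mixing}); this head contribution is $O\bigl(n_0/\sqrt{TN}\bigr)\to 0$ as $T\to\infty$. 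Hence $\limsup_{T}\sup_{t\in[0,1]} t\sqrt{TN}\lvert M_{2,tT,N}\rvert \le \epsilon$ almost surely, and letting $\epsilon \downarrow 0$ gives the uniform negligibility; the vanishing of the supremum in probability then feeds into your Slutsky step, which is otherwise fine. The same split, not the ``read uniformly over horizons'' gloss, is needed for the remainder term $R(\widehat{q}^*_{tT,N},q_0)$ and for the $o((tTN)^{-1/2})$ residual of the TMLE score equation, whose hypotheses are likewise tail statements along the sequence of horizons.
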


\section{A nonparametric function class, nuisance estimation, and the canonical gradient class}\label{section:Lipschitz_HAL_class_and_nuisance_estimation}

We first present a generic function class, which we then use as a statistical model for nuisance estimation and canonical gradient modelling.

\subsection{The function class}\label{subsection:the_Lipschitz_HAL_class}

Consider a bounded Euclidean set $\mathcal{X}$. Without loss of generality, we will suppose that $\mathcal{X} = [0,1]^{d_1}$, the unit hypercube in $\mathbb{R}^{d_1}$. For $M > 0$, let $\mathcal{F}_{0,M}$ be the class of real-valued cadlag functions on $\mathcal{X}$, with sectional variation norm (also called Hardy-Krause variation) no larger than $M$, and, for $L > 0$, let $\mathcal{F}_{1,M,L}$ be the class of functions in $\mathcal{F}_{0,M}$ that are $L$-Lipschitz. The classes $\mathcal{F}_{0,M}$ and $\cup_{M > 0} \mathcal{F}_{0,M}$ have been proposed as statistical models in several past articles \citep{vdL-generally_eff_TMLE2017, fang2020multivariate, bibaut2019fast}. We refer to these works for the rigorous definition of the notion of sectional variation norm. For the present purpose, it will suffice to say that the sectional variation norm is a multivariate extension of the 1-dimensional notion of total variation of a real-valued function.

\paragraph{Statistical properties of $\mathcal{F}_{0,M}$.} This class of functions present several attractive properties as a statistical model. \cite{bibaut2019fast} have shown that its bracketing entropy is well controlled, which will prove useful in our problem. We recall here the formal result on bracketing entropy from \cite{bibaut2019fast}.

\begin{proposition}[Proposition 2 in \cite{bibaut2019fast}]\label{prop:bkting_entropy_HAL}
Consider $\mathcal{F}_{0,M}$ as defined above. For any $r \geq 1$, $\epsilon > 0$, it holds that 
\begin{align}
    \log N_{[\,]}(\epsilon, \mathcal{F}_{0,M}, L_r(\mu)) \lesssim M \epsilon^{-1}  \left\lvert \log(M / \epsilon) \right\rvert^{2(d_1-1)},
\end{align}
where we have absorbed a constant depending on the dimension $d_1$ and on $r$ in the $``\lesssim''$ notation, and where $\mu$ is the Lebesgue measure.
\end{proposition}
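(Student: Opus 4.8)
The plan is to exploit the integral (Stieltjes) representation that characterizes a bounded sectional variation norm, expressing $\mathcal{F}_{0,M}$ as a scaled convex hull of a very simple base class of orthant indicators, and then to build brackets for $\mathcal{F}_{0,M}$ by discretizing the representing measure on a grid and counting. First I would recall the Hardy--Krause representation: any cadlag $f$ on $[0,1]^{d_1}$ admits
\[
f(x) = f(0) + \sum_{\emptyset \neq s \subseteq [d_1]} \int \phi^s_u(x)\, df_s(u), \qquad \phi^s_u(x) := \prod_{j \in s} \mathbf 1\{u_j \le x_j\},
\]
where the sum runs over the lower-dimensional sections $f_s$ obtained by fixing the coordinates outside $s$ at $0$, and where the sectional variation norm equals $|f(0)| + \sum_s \|df_s\|_{\mathrm{TV}}$. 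Thus every $f \in \mathcal{F}_{0,M}$ lies (up to the additive constant $f(0)$, which costs a single one-dimensional bracket) in $M$ times the closed symmetric convex hull of the base class $\Phi := \{\phi^s_u : \emptyset \neq s \subseteq [d_1]\}$, each element of which is monotone nondecreasing in every coordinate and $\{0,1\}$-valued.

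Next I would construct brackets directly from the representation rather than from a black-box convex-hull entropy theorem, because the target exponent is dimension-aware and a bound keyed only to the scalar entropy of $\Phi$ would not see the dependence on $d_1$. The idea is to approximate each representing measure $df_s$ by its restriction to a dyadic grid of resolution $\delta$ on $[0,1]^{|s|}$, replacing $f$ by a piecewise-constant ``cumulative'' function anchored at grid corners. One controls the $L_r(\mu)$ approximation error incurred by this discretization in terms of $\delta$ and the total-variation budget $M$, and then counts the number of distinct grid-cumulative functions compatible with that budget; upper and lower brackets are obtained by perturbing this discrete skeleton.

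The counting reduces to a lattice (stars-and-bars) estimate: distributing a signed mass of total variation at most $M$ over the $\delta^{-d_1}$ cells of the top section $s = [d_1]$, at the quantization scale forced by the target accuracy. Optimizing the grid resolution $\delta$ against $\epsilon$ then yields the leading $M\epsilon^{-1}$ behavior, with the logarithmic factor emerging from the combinatorial count; the lower-dimensional sections $s \subsetneq [d_1]$ involve strictly fewer coordinates and contribute only lower-order terms, so the top section governs the rate. Carrying the $M$-scaling through replaces $\epsilon$ by the effective resolution $\epsilon/M$, which is how $|\log(M/\epsilon)|$ rather than $|\log \epsilon|$ appears.

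The main obstacle is the last, bookkeeping-heavy step: securing the sharp rate $\epsilon^{-1}$ and, above all, pinning down the exact polylog power $2(d_1-1)$. Getting this exponent right requires tracking how the $d_1$ coordinate directions interact in the grid count and how the two-sided (signed) nature of the variation, together with the need for both upper and lower brackets, inflates the per-direction logarithmic cost; it is precisely this interaction that produces the factor $2(d_1-1)$ and not $d_1 - 1$. A useful sanity check throughout is the one-dimensional case $d_1 = 1$, where $\mathcal{F}_{0,M}$ is the class of functions of total variation at most $M$, the polylog exponent collapses to $0$, and the argument must reproduce the classical $M/\epsilon$ bracketing bound for monotone functions with no extraneous logarithmic factor.
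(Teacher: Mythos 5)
There is a genuine gap, and it sits exactly where you have labeled the remaining work ``bookkeeping.'' Your starting point is correct and in fact matches the route taken in the source: the paper does not prove this proposition at all, but imports it from \cite{bibaut2019fast}, whose argument uses precisely the Hardy--Krause representation you write down, splits each representing signed measure $df_s$ into its two Jordan components, and thereby embeds $\mathcal{F}_{0,M}$ into a finite sum of $M$-scaled classes of multivariate (sub-)distribution functions plus a constant. But at that point the cited proof does not discretize and count. It invokes a known and genuinely hard external theorem (Gao's bracketing-entropy bound for $d$-variate distribution functions, $\log N_{[\,]}(\epsilon, \cdot\,, L_r) \lesssim \epsilon^{-1}|\log \epsilon|^{2(d-1)}$) once per section, and then combines brackets across the finitely many components, rescaling $\epsilon$ by $M$ and by the number of pieces. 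That external theorem is the entire mathematical content of the $\epsilon^{-1}\,$polylog rate; it is not recoverable by the counting scheme you propose.

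Concretely, the grid-plus-stars-and-bars step fails for $d_1 \geq 2$. To make the $L_r(\mu)$ error of the discretized cumulative function of order $\epsilon$, you must take the grid resolution $\delta \lesssim \epsilon/M$, hence $m \asymp (M/\epsilon)^{d_1}$ cells, and the mass quantization must keep the total misplaced mass $O(\epsilon)$, i.e.\ quantization scale $\eta \asymp \epsilon/m$. The number of ways to distribute $k = M/\eta$ quanta over $m$ cells then satisfies $\log \binom{k+m-1}{m-1} \asymp m \log(1+k/m) + k\log(1+m/k) \asymp (M/\epsilon)^{d_1}\log(M/\epsilon)$, which is polynomially worse than the target $M\epsilon^{-1}|\log(M/\epsilon)|^{2(d_1-1)}$. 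This is not an artifact of loose constants: uniform discretization is intrinsically an $\epsilon^{-d}$-type construction, whereas the whole point of the proposition is that the power of $1/\epsilon$ stays equal to $1$ in every dimension and only the polylogarithm grows with $d_1$. Achieving that requires a multiscale construction in which mass at different dyadic scales is resolved to different accuracies (this is where the hyperbolic-cross-type count producing $|\log\epsilon|^{2(d_1-1)}$ comes from in Gao's proof). Your $d_1=1$ sanity check cannot detect this failure, because in one dimension the naive grid count and the sharp bound happen to coincide at $M/\epsilon$; the divergence between the two approaches begins exactly at $d_1 = 2$, which is the first case where the claimed polylog exponent is nontrivial.
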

Notice that the entropy depends on the dimension only through the log factor. As a result, even in high dimensions this class remains Donsker, and rates of convergence of empirical risk minimizers (ERMs) over it remain relatively fast. Unlike other popular nonparametric function classes such as Holder classes, $\mathcal{F}_{0,M}$ doesn't impose local smoothness restrictions. Rather, it only places a bound on a global measure of variation, the sectional variation norm, thus allowing for function having different degrees of smoothness or roughness at different regions of their domains. As a result, from a bias-variance trade-off perspective, when one increases $M$ by some amount, an ERM estimator will ``spend'' the additional allowed variation in the areas of the domain where it most improves the fit, while only impacting the entropy loglinearly. While this might not be a perfectly rigorous comparison, note that, Holder classes $H(M,\beta)$ have entropy depending on $\epsilon$ as $\epsilon^{d/ \beta}$, and therefore decreasing $\beta$ so as to reduce bias has a steep entropy price.

We believe that since the nonparametric model $\cup_{M > 0} \mathcal{F}_{0,M}$ only assumes a form of piecewise continuity and that the sectional variation norm is not infinite, using it a statistical model for components of the data-generating distributing amounts to a mild assumption. Our guess is that functions that do not satisfy these requirements are essentially pathological functions $x \mapsto f(x)$ that oscillate increasingly fast as $x$ approaches some value or region. \cite{benkeser2016} have shown with extensive simulations that ERMs over $\mathcal{F}_{0,\widehat{M}}$, with $\widehat{M}$ chosen by cross-validation, perform on par with Random Forests and Gradient Boosting Machines, thereby confirming that $\cup_{M > 0} \mathcal{F}_{0,M}$ is a realistic statistical model in most practical settings.

\paragraph{Computational properties.} \cite{fang2020multivariate} have shown that ERMs over $\mathcal{F}_{0,M}$ can be computed as the solution of a LASSO problem over at most $(n e / d)^d$ distinct basis functions, where $n$ is the sample size. \cite{vdL-generally_eff_TMLE2017} has proposed an alternative set of basis functions of cardinality $n 2^d$, which, although it can be shown to not always be sufficient to represent the ERM, leads to very good practical performance.

\paragraph{Properties of $\mathcal{F}_{1,M,L}$.} Introducing the additional assumption that the functions in our model are Lipschitz allows to bound the supremum norm of a function in terms of its $L_19(\mu)$ norm, as shown by the following lemma. We owe this result to Iosif Pinelis, who proved it as an answer to a question of the first author on MathOverflow \citep{pinelis2020-MO_answer}.

\begin{lemma}\label{lemma:sup_norm_bound_Lipschitz}
There exists $\eta(d,L) > 0$ and $C(d, L) > 0$ such that, for any $f$ is a $d$-variate, real-valued $L$-Lipschitz function such that $\|f\|_{1,\mu} \leq \eta(d,L)$, we have
$\|f\|_\infty \leq C(d,L) \|f\|_{1,\mu}^{1/(d+1)}$, with $\mu$ the Lebesgue measure.
\end{lemma}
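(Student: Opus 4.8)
The plan is to exploit the Lipschitz property to propagate the value of $f$ at a maximizing point over a ball of controlled radius, and then to compare the resulting lower bound on $\|f\|_{1,\mu}$ against its hypothesized smallness. First I would use compactness of $\mathcal{X} = [0,1]^d$ together with continuity of $f$ to select $x_0 \in \mathcal{X}$ with $|f(x_0)| = \|f\|_\infty =: m$; after replacing $f$ by $-f$ if necessary I may assume $f(x_0) = m \ge 0$. The $L$-Lipschitz property then gives $f(x) \ge m - L\|x - x_0\|$ for every $x$, so that $f(x) \ge m/2$ on the Euclidean ball $B(x_0, r)$ with $r := m/(2L)$.

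Next I would lower-bound the Lebesgue mass of this region inside the cube. Even in the worst case where $x_0$ sits at a corner of $[0,1]^d$, the portion of $B(x_0,r)$ lying in $\mathcal{X}$ has measure at least $2^{-d}$ times the full ball volume $c_d r^d$, provided $r \le 1$, where $c_d$ denotes the volume of the unit Euclidean ball (a point at a corner retains exactly one orthant, and that orthant stays inside the cube once $r \le 1$; any other position of $x_0$ only increases the captured mass). Integrating the pointwise bound $|f| \ge m/2$ over this region yields
\begin{align}
  \|f\|_{1,\mu} \ge \frac{m}{2}\, 2^{-d} c_d\, r^d = \frac{c_d}{2^{d+1}(2L)^d}\, m^{d+1},
\end{align}
from which $\|f\|_\infty = m \le \bigl(2^{d+1}(2L)^d / c_d\bigr)^{1/(d+1)} \|f\|_{1,\mu}^{1/(d+1)}$, identifying the claimed constant $C(d,L)$.

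The one technical point, which the hypothesis $\|f\|_{1,\mu} \le \eta(d,L)$ exists to resolve, is the requirement $r = m/(2L) \le 1$ used in the volume estimate. I would handle this by a short case analysis: if instead $m > 2L$, then using the fixed radius $r = 1$ gives $f \ge m - L > m/2$ on $B(x_0,1) \cap \mathcal{X}$, whence $\|f\|_{1,\mu} \ge (m/2)\, 2^{-d} c_d > 2^{-d} c_d L$. Choosing $\eta(d,L) := 2^{-d} c_d L$ therefore forces $m \le 2L$ whenever $\|f\|_{1,\mu} \le \eta(d,L)$, placing us in the regime $r \le 1$ where the main estimate applies. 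I expect this boundary and normalization bookkeeping — verifying the $2^{-d}$ corner volume bound and pinning down $\eta$ so the two cases dovetail — to be the only genuine obstacle; the core inequality itself is an immediate consequence of the Lipschitz propagation argument.
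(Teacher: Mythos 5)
Your proof is correct. Note that the paper does not actually contain a proof of this lemma --- it is imported wholesale from Iosif Pinelis's MathOverflow answer --- and your argument (propagate the value $m = \|f\|_\infty$ at a maximizer over a ball of radius $m/(2L)$ via the Lipschitz property, lower-bound the captured Lebesgue mass, and integrate to get $\|f\|_{1,\mu} \gtrsim m^{d+1}$) is precisely the standard argument underlying that cited result, including the role of $\eta(d,L)$ in forcing the radius $m/(2L)$ to fit inside the cube. The only step you assert rather than prove is that the corner is the worst case for the volume bound $\mu\left(B(x_0,r) \cap [0,1]^d\right) \ge 2^{-d} c_d r^d$ when $r \le 1$; this claim is true (the intersection volume is coordinate-wise monotone in the center, so it is minimized at a corner), and even if you did not want to verify it, you could sidestep it entirely by using the inward-pointing orthant at radius $\min(r, 1/2)$, which keeps that portion of the ball inside the cube and only changes the constants $\eta(d,L)$ and $C(d,L)$.
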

Unlike in the i.i.d. setting, in our mixing data sequence setting, we will need to be able to show that the supremum norm of some functions converge to zero at a certain rate. We refer the interested reader to the proofs of the results of the next two subsections for more detail on these technical questions.

\subsection{Nuisance estimation}\label{subsection:nuisance_estimation}

The efficient influence function expression makes appear the nuisance parameters $q$, $(\phi_{s,j})$, $(h_{s,j}^*)$ and $\bar{h}_{N,T}$. The latter are functions of $q$ and can be computed from an estimate thereof via Monte-Carlo integration, as discussed in \cite{laan_chambaz_lendle2018} in the case $N=1$. The key statistical challenge is then the estimation of the true conditional density $q_0$.

We propose to estimate $q_0$ by a maximum likelihood estimator over the subset of functions of $\mathcal{F}_{1,M,L}(\mathcal{X})$, with $\mathcal{X} := \mathcal{C} \times \mathcal{O}$, that are conditional probability density functions $(c,o) \mapsto q(o \mid c)$, that is over the set
\begin{align}
    \mathcal{Q}_{M,L} := \left\lbrace q \in \mathcal{F}_{1,M,L} : \forall c \int q(o \mid c) do = 1 \text{ and } q(\cdot \mid c) \geq 0 \right\rbrace. 
\end{align}
In practice, $M$ and $L$ should be chosen by cross validation. As there will be no ambiguity in the rest of this section, we use the notation $\mathcal{Q}$ instead of $\mathcal{Q}_{M,L}$. In this section too, we work with the reordered single-indexed sequence $(\widetilde{O}(k))$ as defined in the previous section.

For any conditional density $q:(o,c) \mapsto q(o \mid c)$, let $\ell(q)(c,o) := - \log q(o \mid c)$ be the log-likelihood loss for $q$, and let
\begin{align}
    \widehat{R}_n(q) := \frac{1}{n} \sum_{i=1}^n \ell(q)(\widetilde{C}(k), \widetilde{O}(k)) \qquad \text{and} \qquad R_{0,n}(q) := \frac{1}{n} \sum_{i=1}^n E[\ell(q)(\widetilde{C}(k), \widetilde{O}(k))].
\end{align}

Let $\widehat{q}_n \in \arg\min_{q \in \mathcal{Q}} \widehat{R}_n(q)$ and $q_n \in \argmin_{q \in \mathcal{Q}} R_{0,n}(q)$ be a maximum likelihood estimator, and a maximizer over $\mathcal{Q}$ of the population log likelihood. We analyze $\widehat{q}_n$ using our generic result for ERMs under mixing sequences, theorem \ref{thm:exp_bound_for_ERM} in the appendix. We need the following assumptions.

\begin{assumption}[Lower bound on the population MLE]\label{assumption:lower_bound_pop_MLE} There exists $\delta$ independent of $n$ such that $\inf_{c,o \in \mathcal{C} \times \mathcal{O}} q_n(o \mid c) \geq \delta$.
\end{assumption}

\begin{assumption}\label{assumption:ratio_q0_qn}
There exists $M_1 > 0$ independent of $n$ such that $\| q_0 / q_n \|_\infty \leq M_1$.
\end{assumption}

\begin{assumption}[Uniform boundedness of $(\widetilde{h}_i)_{i \geq 1}$]\label{assumption:unif_boundedness_h_i}
There exists $M_2 > 0$ such that $\sup_{i \geq 1} \|\widetilde{h}_i \|_\infty \leq M_2$.
\end{assumption}

\begin{assumption}\label{assumption:bound_h_i_over_hbar_n}
Denote $\bar{\widetilde{h}}_n := n^{-1} \sum_{i=1}^n \widetilde{h}_i$. There exists $M_3  >0$ independent of $n$ such that 
\begin{align}
    \sup_{i \geq 1} \left\lVert \widetilde{h}_i / \bar{\widetilde{h}}_n \right\rVert_\infty \leq M_3.
\end{align}
\end{assumption}

\begin{theorem}[High probability bound on the MLE of $q_0$]\label{thm:hp_bound_MLE_q0}
Suppose that assumptions \ref{assumption:main_text_exp_alpha_mixing}, \ref{assumption:main_text_finite_rho_mixing}, \ref{assumption:lower_bound_pop_MLE}, \ref{assumption:ratio_q0_qn}, \ref{assumption:unif_boundedness_h_i} and \ref{assumption:bound_h_i_over_hbar_n} hold. Then, letting $\alpha := 1/(d+1)$, it holds that, for every $x > 0$, with probability at least $1 - 2 e^{-x}$, that 
\begin{align}
    \sigma\left(\widehat{q}_n - q_n \right) \lesssim n^{-\frac{1}{4 - 2 \alpha}} + \log n \sqrt{\frac{x}{n}} + \left( \log n \right)^{\frac{2}{2-\alpha}} \left(\frac{x}{n} \right)^{\frac{1}{2 - \alpha}}.
\end{align}
\end{theorem}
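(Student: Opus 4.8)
The plan is to deduce the bound from the generic exponential deviation inequality for empirical risk minimizers on mixing sequences, Theorem \ref{thm:exp_bound_for_ERM}, so that the real work is to verify, for the log-likelihood loss $\ell(q) = -\log q$, the structural ingredients that theorem requires: a bracketing-entropy bound for the loss class, a quadratic lower bound on the excess risk $R_{0,n}(q) - R_{0,n}(q_n)$ in terms of the $\sigma$-distance to $q_n$ (a ``margin''/curvature condition), and matching controls on the variance and on the sup-norm of the loss increments $\ell(q) - \ell(q_n)$. The mixing inputs (Assumptions \ref{assumption:main_text_exp_alpha_mixing} and \ref{assumption:main_text_finite_rho_mixing}) enter only through $\sigma$, whose variance-inflation factor $\sqrt{1 + 2\bm{\rho}}$ is exactly tuned so that the centered empirical process has $\sigma$-controlled variance.

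First I would establish the curvature. Using convexity of $\mathcal{Q}$ and of $q \mapsto -\log q$, the minimizer $q_n$ satisfies a first-order optimality inequality, so that along the direction from $q_n$ to any competitor the linear term is nonnegative and the excess risk is bounded below by its second-order Bregman term. A Taylor expansion of $-\log$ then yields $R_{0,n}(q) - R_{0,n}(q_n) \gtrsim \|q - q_n\|_{2, q_0 \bar{\widetilde{h}}_n}^2$, the constant being controlled by the lower bound $\delta$ on $q_n$ (Assumption \ref{assumption:lower_bound_pop_MLE}) and by $\|q_0/q_n\|_\infty \leq M_1$ (Assumption \ref{assumption:ratio_q0_qn}), which keep the $1/\xi^2$ factor in the remainder bounded. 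The average marginal $\bar{\widetilde{h}}_n$ appears because $R_{0,n}$ averages over the index $i$; Assumption \ref{assumption:bound_h_i_over_hbar_n} ($\|\widetilde{h}_i / \bar{\widetilde{h}}_n\|_\infty \leq M_3$) is precisely what lets me pass from this average norm to the supremum-over-$i$ norm defining $\sigma$, giving $\sigma(q - q_n)^2 \lesssim R_{0,n}(q) - R_{0,n}(q_n)$. The same boundedness inputs, with Assumption \ref{assumption:unif_boundedness_h_i}, yield the companion bound $\sup_i \Var_{q_0 \widetilde{h}_i}(\ell(q) - \ell(q_n)) \lesssim \sigma(q - q_n)^2$ on the increment variance.

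Next comes the step that has no counterpart in the i.i.d.\ theory and that is the crux of the rate. The Bernstein inequality of \cite{merlevede2009}, on which Theorem \ref{thm:exp_bound_for_ERM} rests, controls deviations not only through the variance but also through the sup-norm of the increments, producing a sub-exponential tail with $(\log n)^2$ corrections. Because $\mathcal{Q} \subset \mathcal{F}_{1,M,L}$ consists of Lipschitz functions, I can convert a weak-norm control into a sup-norm control via Lemma \ref{lemma:sup_norm_bound_Lipschitz}: $\|q - q_n\|_\infty \lesssim \|q - q_n\|_{1,\mu}^{\alpha}$ with $\alpha = 1/(d+1)$, hence $\|\ell(q) - \ell(q_n)\|_\infty \lesssim \sigma(q - q_n)^{\alpha}$ after using $q_n \geq \delta$ to linearize the logarithm. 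This exponent $\alpha$ is what propagates into the exponents $4 - 2\alpha$ and $2 - \alpha$ of the conclusion: in the peeling argument over dyadic shells $\{\sigma(q - q_n) \asymp r\}$ inside Theorem \ref{thm:exp_bound_for_ERM}, the chained Merlevède bound has a Gaussian part governed by the variance scale $r^2$ and a sub-exponential part governed by the sup-norm scale $r^{\alpha}$, and balancing these against the curvature $r^2$ returns the leading rate $n^{-1/(4-2\alpha)}$ together with the confidence terms $\log n\,\sqrt{x/n}$ and $(\log n)^{2/(2-\alpha)}(x/n)^{1/(2-\alpha)}$.

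Finally I would supply the entropy hypothesis: since $\ell$ is Lipschitz on $\{q \geq \delta\}$ and $\mathcal{Q} \subset \mathcal{F}_{1,M,L}$, the loss class inherits the bracketing bound $\log N_{[\,]}(\epsilon, \cdot\,, \sigma) \lesssim \epsilon^{-1}\,|\log \epsilon|^{2d-1}$ from Proposition \ref{prop:bkting_entropy_HAL}, giving $p$ arbitrarily close to $1$ and a convergent entropy integral. Assembling the curvature, variance, sup-norm and entropy inputs into Theorem \ref{thm:exp_bound_for_ERM} then produces the stated bound at confidence $1 - 2e^{-x}$. I expect the main obstacle to be the curvature/variance step: controlling the $1/\xi^2$ remainder and the log-increment uniformly over $\mathcal{Q}$ where a density may approach zero is exactly why the lower-bound Assumptions \ref{assumption:lower_bound_pop_MLE} and \ref{assumption:ratio_q0_qn} are indispensable, and, should a uniform lower bound on $q \in \mathcal{Q}$ be unavailable a priori, a preliminary localization showing that $\widehat{q}_n$ already lies in a region bounded away from zero would be needed before the quadratic margin can be invoked.
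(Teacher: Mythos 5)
Your overall architecture matches the paper's: both proofs reduce the statement to the generic ERM deviation bound (Theorem \ref{thm:exp_bound_for_ERM}), verify its entropy, variance, and sup-norm-domination hypotheses, invoke Lemma \ref{lemma:sup_norm_bound_Lipschitz} to get the exponent $\alpha = 1/(d+1)$, and solve the fixed-point equation for $r_n$ to extract the rate $n^{-1/(4-2\alpha)}$. However, there is a genuine gap in how you propose to verify the curvature, variance, and sup-norm conditions. You work with the raw loss $\ell(q) = -\log q$ and a Taylor/Bregman expansion, and you claim the $1/\xi^2$ remainder and the linearization of the logarithm are controlled by Assumptions \ref{assumption:lower_bound_pop_MLE} and \ref{assumption:ratio_q0_qn}. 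But those assumptions bound only the \emph{population minimizer} $q_n$ away from zero (and the ratio $q_0/q_n$); they say nothing about a generic $q \in \mathcal{Q}_{M,L}$, which may vanish on parts of its domain. Consequently your curvature bound $R_{0,n}(q) - R_{0,n}(q_n) \gtrsim \|q - q_n\|^2$, your increment bound $\|\ell(q)-\ell(q_n)\|_\infty \lesssim \sigma(q-q_n)^{\alpha}$, and your entropy-transfer step (``$\ell$ is Lipschitz on $\{q \geq \delta\}$'') all rest on a uniform lower bound over $\mathcal{Q}$ that is not available. You flag this yourself, but the fallback you suggest — a preliminary localization of $\widehat{q}_n$ away from zero — does not repair the argument: Theorem \ref{thm:exp_bound_for_ERM} requires the variance bound (Assumption \ref{assumption:var_bound}), the sup-norm domination (Assumption \ref{assumption:sigma_alpha_dominates_sup_norm}), and the entropy bound to hold over the entire convex class on which the peeling is performed, not merely in a neighborhood of the realized minimizer, so localizing $\widehat{q}_n$ alone is insufficient without restructuring the generic theorem.

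The paper's proof resolves exactly this difficulty by a device you do not use: it replaces $\ell$ with the shifted loss $\widetilde{\ell}_n(q) = -\log\bigl((q+q_n)/(2q_n)\bigr)$, following the maximum-likelihood analysis of Section 3.4.1 of \cite{vdV_Wellner96}. Because $(q+q_n)/2 \geq q_n/2 \geq \delta/2$ pointwise, this loss is uniformly bounded over all of $\mathcal{Q}$ regardless of where $q$ itself vanishes, and concavity of the logarithm guarantees that the MLE $\widehat{q}_n$ remains an (approximate) minimizer of the shifted empirical risk. The variance and curvature conditions are then checked not by Taylor expansion but via conditional Hellinger distances: the Bernstein-norm bounds $\|\widetilde{\ell}_n(q)-\widetilde{\ell}_n(q_n)\|_{q_0(\cdot\mid c),B} \lesssim H_n^2(q,q_n \mid c)$ and $H^2(q,q_n\mid c) \lesssim \int (\widetilde{\ell}_n(q)-\widetilde{\ell}_n(q_n)) q_0$, imported from \cite{vdV_Wellner96}, hold without any lower bound on $q$, and Assumptions \ref{assumption:ratio_q0_qn}, \ref{assumption:unif_boundedness_h_i}, and \ref{assumption:bound_h_i_over_hbar_n} are used only to convert these conditional statements into the $\sigma$-norm statements the generic theorem needs. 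Your proposal identifies the right destination and the right exponent bookkeeping, but without the shifted-loss (or an equivalent) device, the verification of the hypotheses of Theorem \ref{thm:exp_bound_for_ERM} breaks down at the step you yourself identified as the crux.
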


\subsection{Canonical gradient}

As argued in subsection \ref{section:Lipschitz_HAL_class_and_nuisance_estimation}, we think that assuming that components of the data-generating distribution $P_0^{T,N}$ lie in $\mathcal{F}_{1,M,L}$ for some $M, L > 0$ is a relatively mild modelling assumption. We therefore assume that 
\begin{align}
    \left\lbrace \bar{D}_{T,N}(q) : q \in \mathcal{Q} \right\rbrace \subset \mathcal{F}_{1,M,L}. \label{eq:can_gdt_in_LHAL_class}
\end{align}
We conjecture that this actually automatically follows if $\mathcal{Q} \subset \mathcal{F}_{1,M,L}$ and we think that one could prove this using the usual arguments to prove bracketing numbers preservation results. However, this appears to be tedious, so we leave it to future work.
 Under \eqref{eq:can_gdt_in_LHAL_class}, assumption \ref{assumption:Donsker_can_gdt_class} holds. If we further assume that $\| \bar{D}_{T,N}(q_2) - \bar{D}_{T,N}(q_1) \|_\infty \lesssim \| q_2 - q_1 \|_\infty$, lemma \ref{lemma:sup_norm_bound_Lipschitz} and \ref{thm:hp_bound_MLE_q0} then imply that assumption \ref{assumption:uniform_control_can_gdt_sequence} holds.

\section{Adaptive stopping rules}\label{section:adaptive_stopping_rules}

In this section, we present an adaptive stopping rule for the test of the hypothesis $H_0 : \Psi(P_0^{T,N}) = 0$.  In practice, it is natural to consider a parameter of the form $\Psi(P_0^{T,N}) = E_{P_0^{T,N}}[Y^{g^*_1} - Y^{g^*_2}]$, for which the analysis follows trivially from the the analysis of the individual terms of the difference we have presented so far. (Note that $H_0$ doesn't actually depend on $T$ and $N$, since  $\Psi(P_0^{T,N})$ can be written as $\Psi^{(1)}(q_0)$, as pointed out in section \ref{section:problem_formulation}).  An adaptive stopping rule allows to reject the null hypothesis as soon as sufficient evidence has been collected, without the need to wait for a pre-specified sample size to be met. Since an adaptive stopping rule checks a a criterion at every time step, multiple testing considerations must be taken into account so as to make sure the type I error remains controlled. 

A typical approach to design a valid adaptive stopping rule is as follows. Say we want to ensure that type I error is no larger than $1-\alpha$. The key step is to construct a uniform-in-time $(1-\alpha)$-probability confidence band, that is sequence of confidence intervals$([\pm a_{\alpha,N}(T)])_{T \geq 1}$, such that, with probability $1-\alpha$, $\Psi(P_0^{T,N}) \in [\pm a_{\alpha,N}(T)]$ for every $T$. Then a natural stopping rule is to reject the null hypothesis at the earliest time $T$ such that $0 \not\in [\pm a_{\alpha,N}(T)]$.
A uniform-in-time confidence band is a feature of the joint distribution of the sequence of estimators $(\widehat{\Psi}_{N,T})_{T \geq 1}$. Theorem \ref{thm:WIP_TMLE} characterizes in an asymptotic sense the joint distribution of a process obtain from the finite sequence $(\widehat{\Psi}_{N,T})_{t=1}^T$ by rescaling it in time and in range: specifically, it shows that $\{ t \sqrt{T N } \sigma_{0, \infty, N}^{-1}(\widehat{\Psi}_{N,T} - \Psi(P_0^{T,N})) : t \in [0,1] \}$ converges weakly to a Wiener process. Since confidence bands for the Wiener process are well documented, we will be able to use this to construct an adaptive stopping rule.

Since our results on the joint distribution of the (rescaled process built from the) sequence of estimates are asymptotic, our procedure requires a certain burn-in period, that is we must enforce a minimum time point before which the procedure cannot reject. We now present formally our type I error guarantees for the procedure we described.

\begin{theorem}[Type I error of adaptive stopping]\label{thm:type_I_error_adaptive_stopping} Let $(a_\alpha(t) : t \in [0,1])$ be such that 
$P[\forall t \in [0,1], W(t) \in [\pm a_\alpha(t)] ] \geq 1$. Let $T_{\max}$ be the maximum number of time steps the experimenter is willing to run the trial. Let $t_0 \in [0,1]$ be such that $T_0 := t_0 T_{\max}$ is the duration of the burn-in period.

Let 
\begin{align}
    \tau(T_{\max}, t_0) := \min \left\lbrace T \geq T_0, \widehat{\Psi}_{T,N} \not\in \left[ \pm \sigma_{0,\infty,N} \frac{\sqrt{T_{\max} / T} a(T/ T_{\max})}{\sqrt{NT}}\right] \right\rbrace.
\end{align}

Suppose that the assumptions of theorem \ref{thm:WIP_TMLE} are satisfied. Then, under the null hypothesis $H_0 : \Psi(P_0^{T,N}) = 0$, it holds that
\begin{align}
    \lim_{T_{\max} \to \infty} P_0 \left[ \tau(T_{\max}, t_0) \leq T_{\max} \right] \geq 1 - \alpha,
\end{align}
that is the probability that the procedure rejects under the null is asymptotically no larger than the nominal level $\alpha$.
\end{theorem}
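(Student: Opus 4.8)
The plan is to translate the stopping event into a boundary-crossing event for the time-rescaled estimator process, and then pass to the limit using the weak invariance principle of Theorem~\ref{thm:WIP_TMLE} together with a Portmanteau argument. First I would exploit that under $H_0$ the parameter vanishes at every horizon: since $\Psi(P_0^{T,N}) = \Psi^{(1)}(q_0)$ does not depend on $(T,N)$, the null gives $\Psi(P_0^{xT_{\max},N}) = 0$ for all $x \in [0,1]$, so the centered process of Theorem~\ref{thm:WIP_TMLE}, evaluated at $T = T_{\max}$ and $x = T/T_{\max}$, reduces to $S_{T_{\max}}(x) := x\sqrt{T_{\max}N}\,\sigma_{0,\infty,N}^{-1}\widehat{\Psi}_{xT_{\max},N}$. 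Substituting $T = xT_{\max}$ into the band appearing in the definition of $\tau$ and simplifying the powers of $x$ yields
\begin{align}
  \widehat{\Psi}_{T,N} \notin \left[\pm \sigma_{0,\infty,N}\frac{\sqrt{T_{\max}/T}\,a_\alpha(T/T_{\max})}{\sqrt{NT}}\right] \iff |S_{T_{\max}}(x)| > a_\alpha(x).
\end{align}
Since $S_{T_{\max}}$ is piecewise constant with jumps on the grid $\{T/T_{\max}\}$, the rejection event $\{\tau(T_{\max},t_0)\leq T_{\max}\}$ coincides with $\{\Phi(S_{T_{\max}}) > 0\}$, where $\Phi(f) := \sup_{x\in[t_0,1]}(|f(x)| - a_\alpha(x))$.

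Next I would invoke the invariance principle. Under the hypotheses inherited from Theorem~\ref{thm:WIP_TMLE}, $S_{T_{\max}} \xrightarrow{d} W$ in $\mathbb{D}([0,1])$. Because the limit $W$ has continuous sample paths, Skorokhod convergence to $W$ upgrades to uniform convergence on the set carrying all the mass of $W$, so the sup-functional $\Phi$, which is continuous in the uniform norm, is almost surely continuous at $W$; the continuous mapping theorem then gives $\Phi(S_{T_{\max}}) \xrightarrow{d} \Phi(W)$ as real random variables. Applying the Portmanteau theorem to the closed half-line $[0,\infty)$ yields
\begin{align}
  \limsup_{T_{\max}\to\infty} P_0\!\left[\tau(T_{\max},t_0)\leq T_{\max}\right] = \limsup_{T_{\max}\to\infty} P_0\!\left[\Phi(S_{T_{\max}}) > 0\right] \leq P\!\left[\Phi(W) \geq 0\right].
\end{align}

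Finally I would identify the right-hand side with the Wiener band-crossing probability and bound it by $\alpha$. Provided the boundary is a continuity set for the law of $\Phi(W)$, i.e.\ $P[\Phi(W) = 0] = 0$, we have $P[\Phi(W)\geq 0] = P[\exists x\in[t_0,1] : |W(x)| > a_\alpha(x)]$, and monotonicity in the monitoring interval $[t_0,1]\subseteq[0,1]$ together with the defining property of the band gives
\begin{align}
  P\!\left[\exists x\in[t_0,1] : |W(x)| > a_\alpha(x)\right] \leq 1 - P\!\left[\forall x\in[0,1],\, W(x)\in[\pm a_\alpha(x)]\right] \leq \alpha.
\end{align}
Chaining the last two displays shows that the asymptotic rejection probability under $H_0$ is at most $\alpha$, as claimed.

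The main obstacle is the passage-to-the-limit step, and specifically verifying that the band boundary is a null set for the limiting law, $P[\Phi(W)=0]=0$, so that strict and non-strict crossings agree and the Portmanteau inequality is tight in the direction needed; this requires mild regularity of $a_\alpha$ (continuity and positivity) ensuring the running supremum $\sup_{x\in[t_0,1]}(|W(x)|-a_\alpha(x))$ carries no atom at $0$. A secondary technical point is the reduction from the discrete grid of admissible stopping times to the continuum functional $\Phi$, which is precisely where the burn-in threshold $t_0 > 0$ is essential: it confines monitoring to a region bounded away from $x=0$, where the effective sample size $xT_{\max}$ fails to diverge and the invariance-principle approximation breaks down.
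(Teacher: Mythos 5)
Your proposal is correct and follows essentially the same route as the paper's proof: rewrite the rejection event as a boundary-crossing event for the time-rescaled process $\{t\sqrt{T_{\max}N}\,\sigma_{0,\infty,N}^{-1}\widehat{\Psi}_{tT_{\max},N} : t \in [t_0,1]\}$ (using that $\Psi(P_0^{T,N}) = \Psi^{(1)}(q_0) = 0$ under $H_0$), apply theorem \ref{thm:WIP_TMLE} together with the continuous mapping theorem for a sup-type functional that is continuous at the continuous-path limit $W$, and bound the limiting crossing probability by the band coverage property on $[t_0,1] \subseteq [0,1]$. The only differences are cosmetic — you use the difference functional $\sup_{x}(|f(x)| - a_\alpha(x))$ with a Portmanteau inequality where the paper uses the ratio functional $\sup_x |f(x)|/a_\alpha(x)$ (justified via $a_\alpha(t) \geq \sqrt{t_0}\,q_{1-\alpha/2}$) and asserts limit equality — and the no-atom condition $P[\Phi(W)=0]=0$ you explicitly flag as the remaining obstacle is in fact also implicitly needed (and left unaddressed) in the paper's own proof when it equates the limit of the probabilities with the probability under $W$.
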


In practice, theorem \ref{thm:type_I_error_adaptive_stopping} teaches us that for reasonably large horizon $T_{\max}$ and burn-in period $T_0$, the procedure has type-I error approximately no larger than $1-\alpha$.

An alternative direction to construct an adaptive stopping rule would be to analyze the deviations of our estimator with uniform-in-time concentration bounds, such as the ones presented in \cite{howard2018timeuniform}, instead of using a limit theorem. We leave this direction for future research. We nevertheless point out that exact confidence bands/intervals obtained from concentration inequalities tend to be larger than approximate confidence bands/intervals obtained from FCLTs/CLTs. As a result, we conjecture that controlling exactly, rather than approximately the type I error by using concentration inequalities rather than limit theorems might cost a signicant loss of power.

\section{Learning the optimal design along the trial}\label{section:learning_optimal_design}

Consider a target parameter of the form $\Psi_\tau(q) := E_{q,g_2^*} Y - E_{q,g_1^*} Y$, where $Y$ is an outcome at time $\tau$, as defined earlier, and where $g_1^*$ and $g_2^*$ are known and fixed stochastic interventions. In the best arm identification example in the case where there are two arms, we would have $g_1^*(a \mid c^A) = \Ind(a = 1)$ and $g_2^*(a \mid c^A) = \Ind(a = 2)$, that is $g_1^*$ and $g_2^*$ are the deterministic interventions that always assign the same treatment. In the infectious disease example, $g_1^*$ and $g_2^*$ would be two different public health interventions, such as imposing that individuals wear a mask, or that they stay at home for except for a certain set of allowed activities. 

Suppose that we have a collection of candidate designs $g_1(q), \ldots, g_J(q)$ that are indexed by $q$. We would like to achieve the same asymptotic variance as we would if we had carried out the best design among $g_1(q_0), \ldots, g_J(q_0)$ from the beginning. Let us make this more formal. Making explicit that $h_{\infty, N}$ depend on $q$ and $g$, we will write $h_{\infty, N}(q,g)$. For every $k$, let 
\begin{align}
    \chi_k(q) := \Var_{h_{\infty, N}(q, g), q} \left( (\bar{D}^{g_1^*}_{T,N}(q,g) - \bar{D}_{T,N}^{g_2^*}(q,g))(C_\infty, L_\infty) \right)
\end{align}
be the asymptotic variance of the EIF under $g_k$. 

Given an estimator $\widehat{q}_{T,N}$ of $q_0$, we can compute (approximately by Monte-Carlo simulation for example) $\chi_k(\widehat{q}_{T,N})$, the plug-in estimator of $\chi_k(q_0)$. Let $k(T) := \argmin_{k \in [J]} \chi_k(\widehat{q}_{T,N})$. We define our adaptive design at $T$ as $g_{k(T-1)}(\widehat{q}_{T-1,N})$.

We now study heuristically the conditions under which this adaptive design is such that the TMLE of $\Psi_\tau(q_0)$ achieves the asymptotic variance $\chi_{k^*}(q_0)$, with $k^* = \argmin_{k \in [J]} \chi_k(q_0)$, that is the optimal asymptotic variance among the $J$ designs considered. 

Suppose that $\widehat{q}_{T,N}$ converges almost surely to $q_0$ and that $\chi_1(q_0), \ldots, \chi_J(q_0)$ are distinct. Then $\chi_k(\widehat{q}_{T,N})$ converges a.s. to $\chi_k(q_0)$, and therefore, with probability 1, $k(T) \neq k^*$ only a finite number of times. Therefore, we expect that $\| \bar{h}_{0,T,N} - h_{\infty, N}(q_0, g_{k^*}(q_0)) \|_1 = o(1)$, which in turns, if $\bar{h}_{0,T,N}$ and $h_{\infty, N}(q_0, g_{k^*}(q_0))$ are lower bounded away from zero implies that $\| \bar{h}_{0,T,N}^{-1} - h_{\infty, N}^{-1}(q_0, g_{k^*}(q_0)) \|_1 = o(1)$. Therefore, under the assumptions of lemma \ref{lemma:stablization_variance_EIF}, the variance of the the terms $\bar{D}_{T,N}^{g_1^*} - \bar{D}_{T,N}^{g_2^*}$ of the EIF should stabilize to $\chi_{k^*}(q_0)$, which under the assumptions of theorem \ref{thm:WIP_TMLE}, implies that under the adaptive design, the asymptotic variance of the TMLE must be $\chi_{k^*}(q_0)$.

\paragraph{Examples of candidate designs in the best arm identification example.} In the best arm identification example, in the case where there are only two arms and where $\tau = 1$ (that is the target is the ATE after one time step, starting from a known distribution of contexts), it is known that the optimal design is the so-called Neyman allocation design, defined as follows:
\begin{align}
    g(q_0)(a \mid c^A) := \frac{ \sigma_{q_0}(a,c^A) }{\sigma_{q_0}(1,c^A) + \sigma_{q_0}(2,c^A)},
\end{align}
where
\begin{align}
    \sigma_{q}^2(a,c^A) := \Var_q(Y(1) \mid A(1,1) = a, C^A(1,1) = c^A).
\end{align}
In words, the Neyman allocation designs assigns treatment $a$ with probability proportional to the standard deviation of the outcome conditional on $a$ and $c^A$. 

While we don't know whether this design is optimal design among all possible designs in the case $\tau > 1$, we conjecture it should be more efficient that the uniform design over treatment arms. In practice, we recommend considering a finite library of candidate designs including the Neyman allocation design. Other possible designs are the constant design with fixed probabilities for each arm.
\bibliographystyle{plainnat}
\bibliography{biblio}

\appendix

\section{Notation}

\subsection{Notation relative to the data}

\begin{align}
    A(t,i) :& \text{ treatment assigned to individual } i \text{ at } t,\\
    L(t,i) :& \text{ time varying covariates and outcomes  of individual } i \text { at } t,\\
    O(t,i) :=& (A(t,i), L(t,i),\\
    A(t) :=& (A(t,i) : i \in [N]) \\
    L(t) :=& (L(t,i) : i \in [N]) \\
    \bar{A}(t) =& (A(1), \ldots, A(t)),\\
    \bar{L}(t) :=& (L(1), \ldots, L(t)), \\
    \bar{O}(t) :=& (O(1), \ldots, O(t)),\\
    \bar{A}(t,i) :=& (A(s,i) : s \in [t]),\\
    \bar{L}(t,i) :=& (L(s,i) : s \in [t]),\\
    \bar{O}(t,i) :=& (O(s,i) : s \in [t]),\\
    O^{T,N} :=& (O(t,i) : t \in [T], i \in [N]).
\end{align}
Observe that $O^{T,N} = \bar{O}(T)$.

Data is observed in the order $A(1), L(1),\ldots, A(t), L(t)$. Within time points, we arbitrarily order data points by increasing index $i$, that is, we order individual observations as:
\begin{align}
    A(1,1),\ldots,A(1,N),L(1,1),\ldots,L(1,N),\ldots,A(T,1),\ldots,A(T,N),L(T,1), \ldots, L(T,N).
\end{align}
We refer to this ordering as the column ordering. We let $A(t,i)^-$ and $L(t,i)^-$ be the vectors of all observations that come before $A(t,i)$ and $L(t,i)$ in the column ordering, that is
\begin{align}
    A(t,i)^- :=& (\bar{O}(t-1), A(t,1),\ldots,A(t,i-1)) \\
    L(t,i)^- :=& (\bar{O}(t-1), A(t), L(t,1),\ldots,L(t,i-1)).
\end{align}
We let $F_L(t,i)$ be the set of individuals $i$ is in contact with at $t$, and we let $F_A(t,i)$ be the set of individuals upon whose history the experimenter decides the treatment assignment of individual $i$ at time $t$. We define the context functions $C^A(t,i)$ and $C^L(t,i)$ as
\begin{align}
    C^A(t,i) := c_{A(t,i)}(A(t,i)^-,F_A(t,i)) \\
    C^L(t,i) := c_{L(t,i)}(L(t,i)^-, F_L(t,i)).
\end{align}
When there is no ambiguity, we drop the $``L''$ superscript and we use $C(t,i)$ for $C^L(t,i)$.
We let
\begin{align}
    X(t,i) := (C^L(t,i), L(t,i)).
\end{align}

We denote $\widetilde{A}(k)$, $\widetilde{L}(k)$, $\widetilde{O}(k)$ and $\widetilde{X}(k)$ the $k$-th element in the sequences $(A(t,i))_{t,i}$, $(L(t,i))_{t,i}$, $(O(t,i))_{t,i}$, and $(X(t,i))_{t,i}$, respectively.

\subsection{Notation relative to probability distributions, their components, and the target parameters}

\begin{align}
    P^{T,N}_F :& \text{ probability distribution of the full data } (O^{T,N},U),\\
    P^{*,T,N}_F :& \text{ post-intervention distribution of the full data } (O^{*,T,N},U),\\
    P^{T,N} :& \text{ probability distribution of the observed data } O^{T,N},\\
    P^{T,N}_{g^*} :& \text{ G-computation formula}
\end{align}
We denote $\mathcal{M}^{T,N}_F$ the causal model, that is the set of possible full data distributions $P^{T,N}_F$, and $\mathcal{M}^{T,N}$ the statistical model that is the set of possible observed data distributions $P^{T,N}$. We now present notation for components of these distributions:
\begin{align}
    q :& \text{ conditional density of } L(t,i) \text{ given } L(t,i)^- \text{ (or given } C^L(t,i) \text{)} \\
    g = \prod_{t=1}^T \prod_{i=1}^N g_{t,i} :& \text{ treatment mechanism },\\
    g^* = \prod_{s=1}^\tau \prod_{j=1}^N g^*_{s,j} :& \text{ counterfactual treatment mechanism},\\
    h^A_{t,i}(q,g) \text{ and } h^L_{t,i}(q,g) :& \text{ marginal densities of } C^A(t,i) \text { and } C^L(t,i) \text { under } P^{T,N} = \prod_{t,i} g_{t,i} q,\\
    h^A_{t,i} \text{ and } h^L_{t,i} :& \text{ shorthand for } h^A_{t,i(q,g)} \text { and } h^L_{t,i}(q,g) \\
    h^{*,A}_{s,j} \text{ and } h^{*,L}_{s,j} :& \text{ shorthand for } h^A_{s,j}(q,g^*) \text { and } h^L_{s,j}(q,g^*),\\
    \bar{h}_{T,N}^A :=& (TN)^{-1} \sum_{t=1}^T \sum_{i=1}^N h_{t,i}^A, \\
    \bar{h}_{T,N}^L :=& (TN)^{-1} \sum_{t=1}^T \sum_{i=1}^N h_{t,i}^L,\\
    \omega_{s,j} :=& h^A_{s,j} / \bar{h}^A_{T,N} \text{ and } \eta_{s,j} := g^*_{s,j} / g_{s,j}.
\end{align}
When there is no ambiguity, we use $h_{t,i}$, $h^*_{s,j}$, $\bar{h}_{T,N}$ for $h^L_{t,i}$, $h^{*,L}_{s,j}$, $\bar{h}^L_{T,N}$. We denote $\widetilde{h}_k$ the $k$-th element of the column ordered sequence $(h_{t,i})_{t,i}$.
We now recall the definition of the causal parameter and the statistical target parameter:
\begin{align}
    \Psi^F_\tau(P^{T,N}_F) :=& E_{P^{*,T,N}_F}[ Y^*(\tau)] \text{ (causal parameter)},\\
    \Psi_\tau(P^{T,N}) :=& E_{P_{q,g^*}} [Y(\tau)] \text{ (statistical target parameter)}.
\end{align}
\section{Proofs of the structural results}

\subsection{Derivation of the efficient influence function}

The proof of theorem \ref{thm:EIF_representation} relies on the following lemma, which is a straightforward extension of lemma 1 in \cite{vdL2013}.

\begin{lemma}[Projection onto tangent space.]\label{lemma:projection_tangent_space}
The tangent space of the statistical model $\mathcal{M}$ at $P^{T,N}$ is given by
\begin{align}
    T(q) := \left\lbrace o^{T,N} \mapsto \sum_{t,i} s(l(t,i) \mid c^L(t,i)) : s:\mathcal{L} \times \mathcal{C} \to \mathbb{R},\ \forall c, \int s(l,c) q(l \mid c) dl = 0 \right\rbrace.
\end{align}
The projection of any function $o^{T,N} \mapsto D^*(o^{T,N})$ on $T(q)$ is 
\begin{align}
    \bar{D}: o^{T,N} \mapsto \frac{1}{T N} \sum_{t=1}^N \sum_{i=1}^N \frac{h_{t,i}(c^L)}{\bar{h}_{T,N}(c^L)} D_{t,i}(l \mid c),
\end{align}
with 
\begin{align}
    D_{t,i}(l,c) := E \left[ D^*(O^{T,N} \mid L(t,i) = l, C^L(t,i) = c^L \right] - E \left[ D^*(O^{T,N} \mid C^L(t,i) = c^L \right].
\end{align}
\end{lemma}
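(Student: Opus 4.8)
The plan is to follow the template of Lemma 1 in \cite{vdL2013}, adapting it to the temporal-plus-network index $(t,i)$ and to the column ordering defined in the appendix. The argument splits into two parts: identifying the tangent space $T(q)$, and computing the Hilbert-space projection onto it. For the tangent space I would consider one-dimensional submodels $q_\varepsilon(l\mid c) = q(l\mid c)(1+\varepsilon s(l,c))$ with $s$ bounded and satisfying $\int s(l,c)q(l\mid c)\,dl = 0$ for every $c$, so that $q_\varepsilon$ remains a conditional density. Because $g$ is known and fixed and, under Assumption \ref{assumption:homogeneity}, every factor of the likelihood is the \emph{same} conditional density $q$, differentiating $\log p^{T,N}_\varepsilon$ at $\varepsilon=0$ produces the score $o^{T,N}\mapsto \sum_{t,i} s(l(t,i)\mid c^L(t,i))$. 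Since $\mathcal{M}_q$ is saturated, every mean-zero $s$ is attainable, and taking the closed linear span yields exactly $T(q)$.

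The core of the proof is the projection, and the device that makes it tractable despite having a single dependent observation is a \emph{sequential orthogonality} property. Ordering the indices in the column ordering, let $\mathcal{G}_{t,i}$ be the $\sigma$-field generated by everything preceding $L(t,i)$. By Assumption \ref{assumption:cond_indep_given_summary_measure} the conditional law of $L(t,i)$ given $\mathcal{G}_{t,i}$ is $q(\cdot\mid C^L(t,i))$, and $C^L(t,i)$ is $\mathcal{G}_{t,i}$-measurable; hence for any mean-zero $s$ one has $E[s(L(t,i),C^L(t,i))\mid\mathcal{G}_{t,i}]=0$, i.e. each summand of a score is a martingale difference. Consequently summands indexed by distinct $(t,i)$ are orthogonal in $L^2(P^{T,N})$, and every inner product entering the projection decouples into a diagonal sum over single indices. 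I would first observe that projecting $D^*$ onto the larger ``one free score per factor'' space collapses, by the tower property together with the mean-zero constraint (which removes the $E[D^*\mid C^L(t,i)]$ part), to $\sum_{t,i} D_{t,i}(L(t,i),C^L(t,i))$ with $D_{t,i}$ the centered conditional expectation in the statement.

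The remaining step is to project this onto the common-score subspace $T(q)$. Minimizing $\sum_{t,i}\|s-D_{t,i}\|^2_{q,h_{t,i}}$ over a single $s$, where $\|f\|^2_{q,h_{t,i}} := \int\!\!\int f(l,c)^2 q(l\mid c)h_{t,i}(c)\,dl\,dc$, is a weighted least-squares problem whose stationarity condition reads $\sum_{t,i} h_{t,i}(c)\,(s(l,c)-D_{t,i}(l,c))=0$ for a.e.\ $(l,c)$. Solving gives the $h$-reweighted average $s^\ast(l,c) = (TN\,\bar h_{T,N}(c))^{-1}\sum_{t,i} h_{t,i}(c)\,D_{t,i}(l,c) = \tfrac{1}{TN}\sum_{t,i}(h_{t,i}/\bar h_{T,N})(c)\,D_{t,i}(l,c)$, which is precisely the representative $\bar D$ reported in the statement (the actual projection being $\sum_{t,i}s^\ast(l(t,i),c^L(t,i))$, matching the convention of Theorem \ref{thm:EIF_representation}). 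The ratio $h_{t,i}/\bar h_{T,N}$ is exactly the price of being forced to use one common score across contexts with heterogeneous marginals.

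To finish I would verify the two defining properties of the projection directly: that $\bar D\in T(q)$, which is immediate since $\int s^\ast(l,c)q(l\mid c)\,dl=0$; and that the residual $D^*-\sum_{t,i}s^\ast$ is orthogonal to $T_s$ for every mean-zero $s$, again reducing the inner product to its diagonal via the martingale-difference property and then invoking the stationarity identity above. Uniqueness of the Hilbert projection then closes the argument. The main obstacle, and the point where this departs from the i.i.d.\ derivation of \cite{vdL2013}, is establishing the sequential orthogonality that legitimizes the term-by-term decoupling: without independence of observations one must lean on the martingale structure induced by the column ordering and Assumption \ref{assumption:cond_indep_given_summary_measure}. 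Everything downstream is routine weighted least squares.
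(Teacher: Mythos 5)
Your proposal is correct and takes essentially the same route as the paper's proof, which is just the argument of Lemma 1 in \cite{vdL2013} adapted to the $(t,i)$-indexed column ordering: scores of the common factor $q$ (with $g$ known) generate $T(q)$, the sequential martingale-difference structure makes summands at distinct indices orthogonal so all inner products decouple, and the weighted least-squares stationarity condition over a single common score yields the $h_{t,i}/\bar h_{T,N}$ reweighting. Your two-stage organization (project first onto the per-factor score spaces, then onto the common-score subspace) is the same computation packaged slightly differently, and your reading of $\bar D$ as the common score representative, with the actual projection being $\sum_{t,i}\bar D(c^L(t,i),l(t,i))/(TN)$, matches the convention used in Theorem \ref{thm:EIF_representation}.
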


The proof is almost identical to that of lemma 1 in \cite{vdL2013}. We refer the interested reader to this work.

We now present the proof of theorem \ref{thm:EIF_representation}.

\begin{proof}[Proof of theorem \ref{thm:EIF_representation}] 
We start with the case $t = \tau$. We use the following classical strategy to find the canonical gradient: we first find a gradient of $\Psi$ at $q$ w.r.t. $T(q)$, and we then project it onto $T(q)$, which gives the canonical gradient.

\paragraph{Finding a gradient.} Consider a one-dimensional sub-model of $\mathcal{M}$ of the form \begin{align}
    \left\lbrace P^{T,N}_{\epsilon} : \epsilon \in [\pm \epsilon_{\max}], \frac{dP^{T,N}_{\epsilon}}{d \mu}(o^{T,N}) = \prod_{t,i} q_\epsilon(l(t,i) \mid c^L(t,i)) g^*(a(t,i) \mid c^A(t,i))  \right\rbrace,
\end{align}
such that $P^{T,N}_{\epsilon =0} = P^{T,N}$. We have that
\begin{align}
\Psi(P^{T,N}_\epsilon) = \Psi(q_\epsilon) = \int y \prod_{t,i} q_\epsilon(l(t,i) \mid c^L(t,i)) g^*(a(t,i) \mid c^A(t,i)) do^{T,N}. 
\end{align}
Therefore
\begin{align}
    \left.\frac{d \Psi(q_\epsilon)}{d \epsilon} \right\rvert_{\epsilon = 0} =& \int y \frac{d}{d \epsilon} \prod_{t,i} q_\epsilon \bigg|_{\epsilon = 0} \prod_{t,i} g^*_{t,i}   \\
    =& \int \prod_{t,i} q g_{t,i} \left\lbrace \prod_{t,i} \frac{g^*_{t,i}}{g_{t,i}} y \right\rbrace \frac{d}{d \epsilon} \log  \prod_{t,i} q_\epsilon  \bigg|_{\epsilon = 0} \\
    =& \int \prod_{t,i} q g_{t,i} \left\lbrace \prod_{t,i} \frac{g^*_{t,i}}{g_{t,i}} y - \Psi(q) \right\rbrace \frac{d}{d \epsilon} \log  \prod_{t,i} q_\epsilon  \bigg|_{\epsilon = 0} \\
    =& E_{q,g} \left[ \left( \prod_{t,i} \frac{g^*_{t,i}}{g_{t,i}} Y - \Psi(q) \right) \sum_{t,i} s(L(t,i), C^L(t,i)) \right],
\end{align}
where $s(l, c^L) := (d \log q_\epsilon / d \epsilon)|_{\epsilon=0}(l, c^L)$, and therefore $\sum_{t,i} s(l(t,i), c^L(t,i))$ is the score of the parametric submodel at $\epsilon = 0$.

Therefore
\begin{align}
    o^{T,N} \mapsto D^*(o^{T,N}) = \prod_{t,i} \frac{g^*_{t,i}}{g_{t,i}} y - \Psi(q)
\end{align}
is a gradient of $\Psi$ at $P^{T,N}$ w.r.t. $T(P)$.

\paragraph{Projecting $D^0$ onto the tangent space.} From lemma \ref{lemma:projection_tangent_space}, the projection of $D^0$ onto $T(q)$ is
\begin{align}
    D(q)(o^{T,N}) = \frac{1}{T N} \sum_{t,i} \bar{D}_{T,N}(q)(c^L(t,i), l(t,i)),
\end{align}
with 
\begin{align}
    \bar{D}_{T,N}(q)(c^L, l) = \sum_{s,j} \frac{h_{s,j}(c^L)}{\bar{h}_{T,N}(c^L)} &\left\lbrace  E_{q,g}\left[ Y g^* / g \mid L(t,i) =l, C^L(t,i) = c^L \right] \right.\\
    &\left.- E_{q,g}\left[ Y g^* / g \mid C^L(t,i) = c^L \right] \right\rbrace.
\end{align}

\paragraph{Second representation.} Suppose that assumption \ref{assumption:C_L_sufficient} holds. We have that
\begin{align}
    & E_{q,g} \left[ Y g^* / g \mid L(t,i) = l, C^L(t,i) = c^L \right] \\
    =& \frac{1}{h_{t,i}(c^L)} \int E_{q,g} \left[ Y g^* /g \mid L(t,i) = l, L(t,i)^- = l(t,i)^-\right]  \\
    & \qquad \qquad \times \Ind(c^L(l(t,i)^-) = c^L) \prod_{(s, j) < (t,i)} q(l(s,j) \mid l(s,j)^-) g_{s,j}(a(s,j) \mid a(s,j)^-) do^{T,N} \\
    =& \frac{1}{h_{t,i}(c^L)} \int E_{q,g^*} \left[ Y \mid L(t,i) = l, L(t,i)^- = l(t,i)^-\right]  \\
    & \qquad \qquad \times \Ind(c^L(l(t,i)^-) = c^L) \prod_{(s, j) < (t,i)} q(l(s,j) \mid l(s,j)^-) g^*_{s,j}(a(s,j) \mid a(s,j)^-) do^{T,N} \\
    = &  \frac{1}{h_{t,i}(c^L)} E_{q,g^*} \left[ Y \mid L(t,i) = l, c^L(L(t,i)^-) = c^L\right] \\
    &\times \int \Ind(c^L(l(t,i)^-) = c^L) \prod_{(s, j) < (t,i)} q(l(s,j) \mid l(s,j)^-) g^*_{s,j}(a(s,j) \mid a(s,j)^-) do^{T,N} \\
    =& \frac{h_{t,i}^*(c^L)}{h_{t,i}(c^L)} E_{q,g^*} \left[ Y \mid L(t,i) = l, C^L(t,i) = c^L \right].
\end{align}

Similarly,
\begin{align}
    E_{q,g} \left[Y g^* /g \mid C^L(t,i) = c^L \right] = \frac{h^*_{t,i}(c^L)}{h_{t,i}(c^L)} E_{q,g^*} \left[ Y \mid C^L(t,i) = c^L \right].
\end{align}
Replacing these expression in the expression of the canonical gradient gives the wished representation.

\paragraph{Third representation.} Under assumption \ref{assumption:C_L_decomposition}, the third representation follows immediately from the second one.
\end{proof}

\subsection{Proofs of the results on the remainder term}

\begin{proof}[Proof of theorem \ref{thm:first_second_order_result}]
Suppose $\bar{h}_{T,N} = \bar{h}_{0,T,N}$. We have that
\begin{align}
    &E_{P_0^{T,N}} \left[D(q)(O^{T,N})\right] \\
    =& E_{P_0^{T,N}} \left[ \frac{1}{T N} \sum_{t=1}^T \sum_{i=1}^N \bar{D}_{T,N}(C(t,i), L(t,i)) \right] \\
    =& \int \bar{h}_{0,T,N}(c) q_0(l\mid c) \bar{D}_{T,N}(c, l) dc dl \\
    =& \sum_{s=1}^\tau \sum_{j=1}^N \left\lbrace \int h^*_{s,j}(c) q_0(l \mid c) E_{q,g^*} \left[ Y \mid L(s,j) = l, C(s,j) = c\right] dl dc \right. \\
    &\qquad \qquad \left. - \int h^*_{s,j}(c) E_{q,g^*}\left[Y \mid C(s,j) = c \right] dc \right\rbrace
\end{align}
We have that 
\begin{align}
    &\int h^*_{s,j}(c) q_0(l \mid c) E_{q,g^*}\left[Y \mid L(s,j) = l, C(s,j) = c\right] \\
    =& \int q_0(l \mid c) E_{q,g^*} \left[Y \mid L(s,j) = l, L(s,j)^- = l(s,j)^- \right] \\
    &\qquad \qquad \times \Ind(c_{L(s,j)}(l(s,j)^-)=c) \prod_{(s',j') < (s,j)} q_{s',j'} g^*_{s,j} dl dc d(l(s,j)^-) \\
    =& \int E_{q,g^*} \left[Y \mid L(s,j) = l, L(s,j)^- = l(s,j)^- \right] q_{0,s,j} \prod_{(s',j') < (s,j)} q_{s',j'} g^*_{s,j} \\
    =& E_{q_{(s,j)^-}, q_{0,s,j} q_{(s,j)^+}} Y \label{eq:pf_first_remainder_res_eq1},
\end{align}
where the last line was obtained by using Fubini's theorem and integrating out the indicator.
\end{proof}
The same arguments show that
\begin{align}
    \int h_{s,j}^*(c) E_{q,g^*} \left[ Y \mid C(s,j) = c\right] = E_{q,g^*}[Y].
\end{align}
Therefore,
\begin{align}
    E_{P_0^{T,N}}\left[D(q)(O^{T,N}) \right] = \sum_{s=1}^\tau \sum_{j=1}^N E_{q_{(s,j)^-}, (q_0 - q)_{(s,j)}, q_{(s,j)}^+, g^*} Y.
\end{align}
Using the telescoping sum formula for the product difference $\prod_{s,j} q_{s,j} - \prod_{s,j} q_{0,s,j}$,
we have that 
\begin{align}
    \Psi(q) - \Psi(q_0) = \sum_{s,j} E_{q_{(s,j)^-}, q_{(s,j)}-q_{0,(s,j)}, q_{0,(s,j)}, g^*} Y.\label{eq:pf_first_remainder_res_eq2}
\end{align}
Therefore, putting \eqref{eq:pf_first_remainder_res_eq1} and \eqref{eq:pf_first_remainder_res_eq2} together gives the wished expression for $R(\bar{h}_{0,T,N},q)$. 

The derivation of the expression $R(\bar{h}_{0,T,N}, q) - R(\bar{h}_{T,N},q)$ is immediate.

\begin{proof}[Proof of theorem \ref{thm:second_second_order_result}]
We have that
\begin{align}
    &E_{P_0^{T,N}} \left[D(q)(O^{T,N}) \right] \\
    =& E_{P_0^{T,N}} \left[\frac{1}{T N} \sum_{t=1}^T \sum_{i=1}^N \bar{D}_{T,N}(q)(C^A(t,i), A(t,i), L(t,i)\right] \\
    =& \sum_{s=1}^\tau \sum_{j=1}^N \int \bar{h}^A_{0,T,N}(c^A) g_{s,j}(a \mid c^A) q_0(l \mid a, c^A) \omega_{s,j}(c^A) \eta_{s,j}(a \mid ^A) \\
    &\qquad \times \left\lbrace E_{q,g^*} \left[Y(j) \mid L(s,j) = l, A(s,j) = a, C^A(s,j) = c^A \right] \right. \\
    & \qquad \left.- E_{q,g^*} \left[Y(j) \mid A(s,j) = a, C^A(s,j) = c^A \right]  \right\rbrace dl da dc^A,
\end{align}
where we have used assumption \ref{assumption:indiv_outcomes_indep_other_trajs} in the last line above.

\paragraph{Case $\omega = \omega_0$.} We show that in this case, for any given $j$ and $s$, the second term of the $(s,j)$-th term in the sum above cancels out with the first term of the $(s-1,j)$-th term.

We start with rewriting the second term of the $(s,j)$-th term:
\begin{align}
    &\int \bar{h}^A_{0,T,N}(c^A) g_{s,j}(a \mid c^A) q_0(l \mid a, c^A) \omega_{0,s,j}(c^A) \eta_{s,j}(a \mid c^A) \\
    & \qquad \times E_{q,g^*} \left[Y(j) \mid A(s,j) = a, C^A(s,j) = c^A \right] dl da dc^A \\
    =& \int \bar{h}^{*,A}_{0,s,j}(c^A) g^*_{s,j}(a \mid c^A) E_{q,g^*} \left[Y(j) \mid A(s,j) = a, C^A(s,j) =  c^A \right] da dc^A \\
    =& \int \prod_{t=1}^{s-1} g_{t,j}^* q_{0,t,j} g^*_{s,j} E_{q,g^*} \left[Y(j) \mid A(s,j) = a, \bar{O}(s-1,j) = \bar{o}(s-1,j) \right] da d\bar{o}(s-1,j) \\
    =& E_{q_{0,1:s-1}, q_{s:\tau}, g^*} Y.
\end{align}
The third line above follows from assumption \ref{assumption:C_L_sufficient}.
We now show that the first term of the $(s-1,j)$-th term is equal to the above quantity:
\begin{align}
    &\int \bar{h}^A_{0,T,N}(c^A) g_{s,j}(a \mid c^A) q_0(l \mid a, c^A) \omega_{s,j}(c^A) \eta_{s,j}(a \mid ^A) \\
     &\qquad \times E_{q,g^*} \left[Y(j) \mid L(s-1,j) = l, A(s-1,j) = a, C^A(s-1,j) = c^A \right] dl da dc^A\\
    =& \int \bar{h}^{*,A}_{0,s,j}(c^A) g^*_{s,j}(a \mid c^A) q_0(l \mid a, c^A) \\
    & \qquad \times E_{q,g^*} \left[Y(j) \mid L(s-1,j) = l, A(s-1,j) = a, C^A(s-1,j) = c^A \right] dl da dc^A \\
    =& \int \prod_{t=1}^{s-2} g_{t,j}^* q_{0,t,j}g^*_{s-1,j} q_{0,s-1} E_{q,g^*} \left[Y(j) \mid \bar{O}(s-1,j) = \bar{o}(s-1,j) \right] d\bar{o}(s-1,j) \\
    =& E_{q_{0,1:s-1}, q_{s:\tau}, g^*} Y.
\end{align}
Thus, by telescoping, $E_{P_0^{T,N}}[D(q)(O^{T,N})] =\Psi(q) - \Psi(q_0)$, and therefore $R(\omega_0, q) = 0$. 

At a high level, the reason why this cross-terms cancellation happens is the first term of the $(s-1,j)$-th term is obtained by integration against $g^*_{s,j}$ of the second term of $(s,j)$-th term. Applying the operator $E_{P_0^{T,N}}$ boils down to successively, in the backwards direction, integrating with respect to the factors of $P_0^{T,N}$. The first step in this process applied to the second term of $(s,j)$ is to integrate w.r.t. $g^*_{s,j}$, which gives the first term of $(s-1,j)$. The subsequent steps being the same for both terms, the resulting quantities are the same. 

\paragraph{Case $q = q_0$.} In this case, it is immediate to observe that the cancellation happens within each term of the terms of the sum over $(s,j)$. Therefore, $R(\omega, q_0) = 0$.

Therefore, we can write $R(\omega, q) = R(\omega, q) - R(\omega_0,q)$, which makes appear the wished product of differences structure.
\end{proof}

\section{Results on empirical process induced by weakly dependent sequences}

Let $(X_n)_{n \geq 1}$ be a sequence of random variables taking values in a set $\mathcal{X}$, and let $\mathcal{F}$ be a class of functions with domain $\mathcal{X}$. In this section, we present a several novel results on empirical processes of the form
\begin{align}
    \left\lbrace M_n(f) : f \in \mathcal{F} \right\rbrace \qquad \text{where } \qquad M_n(f) := \frac{1}{n}\sum_{i=1}^n f(X_i) - E[f(X_i)].
\end{align}

We present three types of results: a maximal inequality over $\mathcal{F}$ (or over the intersection of $\mathcal{F}$ with a ball of controlled radius), an equicontinuity result, and an exponential risk bound for empircal risk mimizers over $\mathcal{F}$. The latter two are a consequence of the former.

We do not make independence nor stationarity assumptions on the sequence $(X_n)_{n \geq 1}$. Rather, we consider sequences $(X_n)_{n \geq 1}$ that satisfy only the following mixing conditions.

\begin{assumption}[$\alpha$-mixing]\label{assumption:exponential_alpha_mixing}
The uniform $\alpha$-mixing coefficients of the sequence $(X_i)_{i \geq 1}$ satisfy
\begin{align}
    \alpha(n) \leq \exp(-2 c n), \text{ for some } c > 0.
\end{align}
\end{assumption}

\begin{assumption}[$\rho$-mixing]\label{assumption:finite_rho_mixing}
The uniform $\rho$-mixing coefficients of the sequence $(X_i)_{n \geq 1}$ have finite sum, that is $\sum_{n \geq 1} \rho(n) < \infty$. We denote $\bm{\rho} := \sum_{n \geq 1} \rho(n)$.
\end{assumption}

We suppose that $\mathcal{X} \subset \mathbb{R}^d$ for some $d \geq 1$ and that, for every $i \geq 1$, the marginal distribution of $X_i$ admits a density w.r.t. the Lebesgue measure that we denote $h_i$. Supposing that assumption \ref{assumption:finite_rho_mixing} holds, we define the following mapping $\mathcal{F} \to \mathbb{R}$:
\begin{align}
    \sigma(f) := \sqrt{1 + 2 \bm{\rho}} \sup_{i \geq 1} \| f \|_{2,h_i}. 
\end{align}
It is straightforward to check that $\sigma$ is a norm. Our results apply to classes of functions that are bounded in supremum norm.

\begin{assumption}[Uniform boundedness]\label{assumption:max_ineq_unif_bound}
There exists $M \in (0, \infty)$ such that $\sup_{f \in \mathcal{F}} \|f\|_\infty \leq M$.
\end{assumption}

\subsection{A local maximal inequality}\label{subsection:local_max_ineq}

The result of this subsection is a \textit{local} maximal inequality in the sense that it bounds the supremum of $M_n(f)$ over a $\sigma$-ball included in $\mathcal{F}$. We state below the corresponding assumption.

\begin{assumption}[$\sigma$ norm boundedness]\label{assumption:max_ineq_sigma_bound} There exists $r > 0$ such that $\sup_{f \in \mathcal{F}} \sigma(f) \leq r$.
\end{assumption}

We can now state our result.

\begin{theorem}\label{thm:max_ineq_empirical_mixing_process}
Suppose that assumptions \ref{assumption:exponential_alpha_mixing}, \ref{assumption:finite_rho_mixing}, \ref{assumption:max_ineq_unif_bound} hold. Suppose that $r \geq 2 M n^{-1/2}$. Then, for any $r^ \in [M n^{-1/2}, r]$, it holds with probability at least $1 - 2e^{-x}$ that,
\begin{align}
    \sup_{f \in \mathcal{F}} M_n(f) \lesssim &r^- + \frac{\log n}{\sqrt{n}} \int_{r^-}^r \sqrt{\log (1 + N_{[\,]}(\epsilon, \mathcal{F}, \sigma))} d \epsilon + \frac{M (\log n)^2}{n} \log (1 + N_{[\,]}(r, \mathcal{F}, \sigma))\\
    &+ r \sqrt{\frac{(\log n) x}{n}} + \frac{M (\log n)^2}{n} x.
\end{align}
\end{theorem}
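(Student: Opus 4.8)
The plan is to prove this local maximal inequality by combining an adaptive chaining argument (in the spirit of Ossiander) with a Bernstein-type concentration inequality for $\alpha$-mixing sequences, which is the route the authors announce in the introduction. The key structural feature to exploit is that the $\sigma$ norm already encodes the $\rho$-mixing dependence through the factor $\sqrt{1 + 2\bm{\rho}}$: under Assumption \ref{assumption:finite_rho_mixing}, for a fixed $f$ the variance of the partial sum $\sum_{i=1}^n f(X_i)$ is bounded by $n \sigma(f)^2$ (up to constants), so $\sigma(f)$ plays exactly the role that the $L_2(P)$ norm plays in the i.i.d. theory. This means the bracketing-entropy geometry in $\sigma$ norm controls the fluctuations, and the dependence does not enter the chaining geometry beyond this variance proxy.

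The steps I would carry out are as follows. First, I would fix a geometric sequence of resolution levels $\epsilon_0 = r \geq \epsilon_1 \geq \cdots \geq \epsilon_K \geq r^-$ with $\epsilon_{k+1} = \epsilon_k/2$, and for each level choose a minimal $\epsilon_k$-bracketing of $\mathcal{F}$ in $\sigma$ norm, of cardinality $N_k := N_{[\,]}(\epsilon_k, \mathcal{F}, \sigma)$. Each $f \in \mathcal{F}$ is approximated by a chain of bracket midpoints $\pi_k f$, and I write the telescoping decomposition $M_n(f) = M_n(\pi_0 f) + \sum_{k} M_n(\pi_{k+1} f - \pi_k f)$ plus a final approximation term bounded by $r^-$ (this is where the $r^-$ leading term comes from). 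Second, for each chaining link I would apply the Bernstein inequality for mixing sequences of \cite{merlevede2009} to the increment $\pi_{k+1}f - \pi_k f$: this increment has $\sigma$ norm $\lesssim \epsilon_k$ and sup norm $\lesssim M$, so Bernstein yields a tail with a subgaussian part scaling like $\epsilon_k \sqrt{(\log n)\, x_k / n}$ and a subexponential part scaling like $M (\log n)^2 x_k / n$, where the $(\log n)$ factors are the characteristic loss incurred when passing from an i.i.d. Bernstein bound to the mixing version (the blocking/Berbee-coupling device under geometric $\alpha$-mixing costs a logarithmic number of effective blocks). Third, I would perform the \emph{adaptive} union bound: assign to the $k$-th level a deviation budget $x_k = x + \log(1 + N_k N_{k+1})$ so that the probability of failure at any link is summable to $2e^{-x}$, then sum the resulting bounds over the chain.

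Summing the subgaussian contributions over levels produces $\frac{\log n}{\sqrt n}\sum_k \epsilon_k \sqrt{\log(1+N_k)}$, which I bound by the entropy integral $\frac{\log n}{\sqrt n}\int_{r^-}^r \sqrt{\log(1+N_{[\,]}(\epsilon,\mathcal{F},\sigma))}\,d\epsilon$ by comparing the geometric sum to the integral; summing the subexponential contributions is dominated by the coarsest level and yields the $\frac{M(\log n)^2}{n}\log(1+N_{[\,]}(r,\mathcal{F},\sigma))$ term. The two explicit $x$-dependent terms $r\sqrt{(\log n)x/n}$ and $\frac{M(\log n)^2}{n}x$ come from the $x$ part of each $x_k$ collected at the coarsest scale (using $\sup_f \sigma(f) \le r$ to bound the subgaussian variance proxy and $M$ the range). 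I expect the main obstacle to be the careful bookkeeping in the Bernstein step: one must verify that the variance of the blocked partial sums of the increment is genuinely controlled by $n\,\sigma(\pi_{k+1}f - \pi_k f)^2$ under the combined $\alpha$- and $\rho$-mixing assumptions, and that the geometric $\alpha$-mixing rate $\alpha(n)\le e^{-2cn}$ converts into exactly the advertised $(\log n)$ and $(\log n)^2$ factors (rather than, say, a power of $\log n$ determined by the block length). Getting these logarithmic powers to match the statement — a single $\log n$ on the subgaussian/integral terms and $(\log n)^2$ on the subexponential terms — is the delicate quantitative heart of the argument, and it hinges on the precise form of the Merlevède–Peligrad–Rio bound and the optimal choice of block length as a function of $n$ and $c$.
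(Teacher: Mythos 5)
Your proposal correctly identifies the two raw ingredients the paper uses (bracketing chaining in the $\sigma$ norm plus the Merlev\`ede--Peligrad--Rio Bernstein inequality, with $\sigma$ playing the role of the $L_2$ variance proxy), but the chaining scheme you describe is \emph{non-adaptive} chaining with a level-wise union bound, and that scheme cannot produce the stated bound. The problem is in your treatment of the links: you give every increment $\pi_{k+1}f - \pi_k f$ sup norm $\lesssim M$ and budget $x_k = x + \log(1+N_k N_{k+1})$. The subexponential part of Bernstein then contributes $\frac{M(\log n)^2}{n}\,\log(1+N_k N_{k+1})$ at level $k$, and summing over levels this is dominated by the \emph{finest} scale, not the coarsest: bracketing numbers grow as the resolution shrinks, so $\sum_k \log(1+N_k N_{k+1}) \gtrsim \log\bigl(1+N_{[\,]}(r^-,\mathcal{F},\sigma)\bigr)$, which for, say, $\log N_{[\,]}(\epsilon) \sim \epsilon^{-p}$ is of order $(r^-)^{-p} \gg r^{-p}$. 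Your claim that the subexponential contributions are ``dominated by the coarsest level'' is therefore false, and the resulting bound carries $\frac{M(\log n)^2}{n}\log(1+N_{[\,]}(r^-,\mathcal{F},\sigma))$ in place of the theorem's $\frac{M(\log n)^2}{n}\log(1+N_{[\,]}(r,\mathcal{F},\sigma))$. This is exactly the obstruction that Ossiander's adaptive truncation removes in the paper's proof: a stopping time $\tau(f) = \min\{j : \Delta^j_f > a_j\}\wedge J$ truncates each chain so that the link at level $j$ has sup norm $\lesssim a_j$ (not $M$), with $a_j := \sqrt{n}(\log n)^{-1}\bigl(\log(1+\bar N_j/P[A])\bigr)^{-1/2}$ chosen so that the level-$j$ subexponential term collapses into the subgaussian/entropy-integral term; only the root of the chain carries the full sup norm $M$, and it is paired with the coarse-scale entropy, which is where the single $\frac{M(\log n)^2}{n}\log(1+N_{[\,]}(r,\mathcal{F},\sigma))$ term comes from.

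A second, related gap is the mode of argument. Adaptive truncation creates ``tip'' terms $(f-\lambda^j_{k(j,f)})\Ind(\tau(f)=j)$, which the paper controls \emph{in expectation} using non-negativity and the threshold inequality $\Delta^j_f\Ind(\Delta^j_f>a_j)\le (\Delta^j_f)^2/a_j$, giving the $\epsilon_j^2/a_j$ bounds; this cannot be done inside a union bound over tail probabilities. For this reason the paper does not prove the high-probability statement by summing level-wise deviation probabilities at all: it proves a conditional-expectation maximal bound given an \emph{arbitrary} event $A$ (Lemma \ref{lemma:cond_exp_max_finite_collection}, the van Handel device combined with the Merlev\`ede et al.\ inequality), runs the whole chaining argument conditionally on $A$, and then obtains the tail bound by the self-referential choice $A = \{\sup_{f\in\mathcal{F}} M_n(f) \ge \Psi(x)\}$. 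Your proposal has no mechanism for the tips (the assertion that the final approximation term is ``bounded by $r^-$'' is not valid for the empirical part of $M_n$ without such a truncation-plus-expectation argument), so even apart from the bookkeeping of logarithmic factors, the chain as you set it up does not close.
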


The proof relies on the following lemma, which is a corollary of lemma A.7 in \cite{vanHandel2010} and theorem 2 in \cite{merlevede2009}.

\begin{lemma}\label{lemma:cond_exp_max_finite_collection}
Suppose that $f_1, \ldots, f_N \in \mathcal{F}$, and that conditions on the preceding theorem hold. Then, for any event $A$ defined on the same probability space as $(X_i)_{n \geq 1}$,
\begin{align}
    E \left[ \max_{i \in [N]} M_n(f_i) \mid A \right] \lesssim \frac{1}{\sqrt{n}} \left (\max_{i \in N} \sigma(f_i) + \frac{M}{\sqrt{n}}\right) \sqrt{\log \left( 1 + \frac{N}{P[A]}\right)}  + \frac{M (\log n)^2}{n} \log \left( 1 + \frac{N}{P[A]}\right).
\end{align}
\end{lemma}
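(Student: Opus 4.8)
The plan is to prove the lemma in two movements: first establish a Bernstein-type tail bound for each individual centered average $M_n(f_i)$ from the mixing structure, then convert these uniform tail bounds into a bound on the conditional expectation of the maximum by the abstract device of Lemma A.7 of \cite{vanHandel2010}. Throughout I will index the data sequence $(X_k)$ by $k$ and reserve $i \in [N]$ for the functions.

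\emph{First movement: a single-function tail bound.} Fix $i \in [N]$ and write $n M_n(f_i) = S_n^{(i)} := \sum_{k=1}^n g_k$ with $g_k := f_i(X_k) - E[f_i(X_k)]$, so that the summands are centered and, by Assumption \ref{assumption:max_ineq_unif_bound}, bounded by $|g_k| \leq 2M$. Under the geometric $\alpha$-mixing of Assumption \ref{assumption:exponential_alpha_mixing}, Theorem 2 of \cite{merlevede2009} applies to $S_n^{(i)}$ and yields, for all $\lambda > 0$,
\[
    P\left[ S_n^{(i)} \geq \lambda \right] \leq \exp\left( - \frac{C \lambda^2}{ n V_i^2 + M^2 + \lambda M (\log n)^2 } \right),
\]
where $V_i^2 = \sup_{k} \left( \Var(g_k) + 2 \sum_{l > k} |\Cov(g_k, g_l)| \right)$ is the variance-type quantity controlling that inequality and $C$ is an absolute constant (the factor $2M$ is absorbed into $C$).

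\emph{Second movement: identifying the variance with $\sigma(f_i)^2$.} The crux is to show $V_i^2 \leq \sigma(f_i)^2$, which is exactly what the $\rho$-mixing assumption buys and the reason the factor $\sqrt{1 + 2\bm{\rho}}$ is built into the norm $\sigma$. By the $\rho$-mixing covariance inequality, for any $k < l$,
\[
    \left| \Cov(g_k, g_l) \right| \leq \rho(l - k)\, \|f_i\|_{2, h_k}\, \|f_i\|_{2, h_l} \leq \rho(l-k) \left( \sup_{m \geq 1} \|f_i\|_{2, h_m} \right)^2,
\]
while $\Var(g_k) \leq \|f_i\|_{2,h_k}^2 \leq \sup_{m} \|f_i\|_{2,h_m}^2$. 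Summing the diagonal and twice the off-diagonal contributions and invoking Assumption \ref{assumption:finite_rho_mixing},
\[
    V_i^2 \leq \left( 1 + 2 \sum_{n \geq 1} \rho(n) \right) \sup_{m \geq 1} \|f_i\|_{2, h_m}^2 = (1 + 2 \bm{\rho}) \sup_{m \geq 1} \|f_i\|_{2, h_m}^2 = \sigma(f_i)^2 .
\]
Substituting back and putting $\lambda = n t$, each $M_n(f_i)$ obeys the sub-gamma tail
\[
    P\left[ M_n(f_i) \geq t \right] \leq \exp\left( - \frac{C n^2 t^2}{ n \sigma(f_i)^2 + M^2 + n t M (\log n)^2 } \right),
\]
i.e. $M_n(f_i)$ is sub-gamma with variance proxy $v := \left(\max_{i} \sigma(f_i)^2 + M^2/n\right)/n$ and scale $b := M (\log n)^2 / n$, uniformly over $i \in [N]$.

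\emph{Third movement: conditional maximal inequality.} With a uniform sub-gamma tail in hand I invoke Lemma A.7 of \cite{vanHandel2010}, whose mechanism is the elementary inflation $P[\,\cdot \mid A\,] \leq P[\,\cdot\,]/P[A]$. Bounding $\max_i M_n(f_i) \leq (\max_i M_n(f_i))^+$ and integrating its conditional upper tail,
\[
    E\left[ \max_{i \in [N]} M_n(f_i) \,\middle|\, A \right] \leq \int_0^\infty \min\left( 1, \frac{1}{P[A]} \sum_{i=1}^N P[M_n(f_i) \geq t] \right) dt,
\]
which, after splitting the integral at the Bernstein inversion point, is precisely the quantity Lemma A.7 controls; with $L := \log\left( 1 + N/P[A] \right)$ it gives $E[ \max_i M_n(f_i) \mid A ] \lesssim \sqrt{ v L } + b L$. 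Unpacking $v$ and $b$ and using $\sqrt{a + c} \leq \sqrt{a} + \sqrt{c}$,
\[
    \sqrt{ v L } \leq \frac{1}{\sqrt n}\left( \max_{i} \sigma(f_i) + \frac{M}{\sqrt n} \right)\sqrt{L}, \qquad b L = \frac{M (\log n)^2}{n}\, L,
\]
which is exactly the claimed bound. The main obstacle I anticipate is the second movement: matching the abstract variance-type quantity $V_i^2$ entering Theorem 2 of \cite{merlevede2009} to the $\sigma$-norm, which requires deploying the two mixing notions in their correct roles ($\alpha$-mixing for the exponential concentration, $\rho$-mixing for the covariance decay that caps $V_i^2$ at $\sigma(f_i)^2$). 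A secondary care point is the bookkeeping in the third movement: handling the one-sided maximum through positive parts, tracking that conditioning inflates $N$ to $N/P[A]$ inside the logarithm, and checking that Lemma A.7 accommodates the extra $(\log n)^2$ factor in the scale $b$ without degrading the stated orders.
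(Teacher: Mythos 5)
Your proposal is correct and follows exactly the route the paper intends: the paper offers no written-out proof, stating only that the lemma ``is a corollary of lemma A.7 in \cite{vanHandel2010} and theorem 2 in \cite{merlevede2009},'' and your three movements fill in precisely that derivation --- Merlev\`ede et al.'s Bernstein inequality for the single-function tail, the $\rho$-mixing covariance bound to cap the variance proxy by $\sigma(f_i)^2$ (which is indeed why the factor $\sqrt{1+2\bm{\rho}}$ is baked into $\sigma$, matching the paper's restatement in \eqref{eq:Merlevede_et_al_bound}), and van Handel's conditional maximal inequality to produce the $\log(1+N/P[A])$ terms. No gaps; the bookkeeping with $\lambda = nt$ and the identification $v = (\max_i\sigma(f_i)^2 + M^2/n)/n$, $b = M(\log n)^2/n$ reproduces the stated bound exactly.
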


\begin{proof}[Proof of theorem \ref{thm:max_ineq_empirical_mixing_process}]
 The result will follow if we show that for $A := \left\lbrace \sup_{f \in \mathcal{F}} M_n(f) \geq \Psi(x) \right\rbrace$, 
 with
 \begin{align}
     \Psi(x) :=& r^- + \frac{\log n}{\sqrt{n}} \int_{r^-}^r \sqrt{\log (1 + N_{[\,]}(\epsilon, \mathcal{F}, \sigma))} d \epsilon + \frac{M (\log n)^2}{n} \log (1 + N_{[\,]}(r, \mathcal{F}, \sigma))\\
    &+ r \sqrt{\frac{(\log n) x}{n}} + \frac{M (\log n)^2}{n} x,
 \end{align}
 it holds that
 \begin{align}
     E \left[ \sup_{f \in \mathcal{F}} M_n(f) \mid A \right] \leq \Psi\left(\log \left( 1 + \frac{1}{P[A]} \right) \right)
 \end{align}

\paragraph{Setting up the notation.}
Let $\epsilon_j := r 2^{-j}$, and let $J \geq 1$ such that $\epsilon_J \leq r^- < \epsilon_{J-1}$. For every $j$, let 
\begin{align}
    \mathcal{B}_j := \{ ( \lambda^j_k, \upsilon^j_k) : k \in [N_j] \}
\end{align} 
be an $\epsilon_j$-bracketing of $\mathcal{F}$ in $\sigma$ norm. For every $f \in \mathcal{F}$ and every $j$, let $k(j,f)$ be such that 
\begin{align}
    \lambda^j_{k(j,f)} \leq f \leq \upsilon^j_{k(j,f)}.
\end{align}
For every $j$ and $f$, define the function $\Delta^j_f := \upsilon^j_{k(j,f)} - \lambda^j_{k(j,f)}$.
Let $a_j$ be a decreasing sequence of positive numbers, such that $a_{j-1}$ and $a_j$ are within constant factors of each other for every $j$.  We introduce, for every $f$, the function
\begin{align}
    \tau(f) : x \mapsto \left( \min \{ j \geq 0 : \Delta^j_f(x) > a_j \} \right) \wedge J.
\end{align}

\paragraph{Chaining decomposition.} We write $f$ as a telescoping sum using an adaptive chaining device. Adaptive chaining is a standard empirical process technique introduced by \cite{ossiander1987} for the analysis of empirical processes under bracketing entropy conditions, in which the depth of a chain is a function of the form of $\tau(f)$ that is chosen so as to control the supremum norm of the links of the chain. We have, in a pointwise sense, that, for every $f \in \mathcal{F}$, 
\begin{align}
    f =& \lambda^0_{k(0,f)} + \sum_{j=1}^{\tau(f)} \left(\lambda^j_{k(j,f)} - \lambda^{j-1}_{k(j-1,f)}\right)  + \left(f - \lambda^{\tau(f)}_{k(\tau(f), f)} \right)\\
    =& \lambda^0_{k(0,f)} + \sum_{j=1}^J \left( \lambda^j_{k(j,f)} - \lambda^{j-1}_{k(j-1,f)} \right) \Ind(\tau(f) < j) \\
    &+ \sum_{j=1}^J \left( f - \lambda^j_{k(j,f)} \right) \Ind(\tau(f) = j).
\end{align}
The first term represents the root of the chain. The second term is the sum across depth levels $j$ of the links of the chain. The third term is the tip of the chain.

\paragraph{Control of the tips.} We treat separately the case $j <J$ and the case $j  = J$.
\subparagraph{Case $j < J$.}
We will use the fact that, for $j < J$, we must have that if $\tau(f) =j$, then $\Delta^j_f > a_j$. From non-negativity of $f - \lambda^j_{k(j,f)}$,
\begin{align}
    M_n( (f - \lambda^j_{k(j,f)})\Ind(\tau(f) = j) )  \leq & E \left[ \frac{1}{n} \sum_{i=1}^n (f - \lambda^j_{k(j,f)})(X_i) \Ind(\tau(f)(X_i) = j) \right] \\
    \leq & E \left[ \frac{1}{n} \sum_{i=1}^n \Delta^j_f(X_i)  \Ind(\tau(f)(X_i) = j)  \right].
\end{align}
As $\Delta^j_f \Ind(\tau(f) = j) > a_j \Ind(\tau(f) = j)$, we have that $\Delta^j_f \Ind(\tau(f) =j) \leq a_j^{-1} (\Delta^j_f)^2 \Ind(\tau(f) = j)$, and therefore
\begin{align}
    E \left[ \frac{1}{n} \sum_{i=1}^n \left( \Delta^j_f \Ind(\tau(f) = j)\right)(X_i) \right] \leq & \frac{1}{a_j} \frac{1}{n} \sum_{i=1}^n E \left[ \left(\Delta^j_f(X_i) \right)^2 \right] \\
    = & \frac{1}{a_j} \frac{1}{n} \sum_{i=1}^n \| \Delta^j_f \|^2_{2,h_i} \\
    \leq & \frac{1}{a_j} \sigma^2(\Delta^j_f) \\
    \leq & \frac{\epsilon_j^2}{a_j}.
\end{align}
Therefore
\begin{align}
    E \left[ \sup_{f \in \mathcal{F}} M_n\left( (f - \lambda^j_{k(j,f)}) \Ind(\tau(f) = j) \right)  \right] \leq \frac{\epsilon_j^2}{a_j}.
\end{align}

\subparagraph{Case $j = J$.} We have that
\begin{align}
    M_n\left( (f - \lambda^j_{k(j,f)})\Ind(\tau(f) = J) \right) \leq & \frac{1}{n} \sum_{i=1}^n \| \Delta^j_f \|_{1,h_i} \\
    \leq & \frac{1}{n} \sum_{i=1}^n \| \Delta^j_f \|_{2,h_i} \\
    \leq & \sigma(\Delta^j_f) \\
    \leq & \epsilon_J.
\end{align}

\paragraph{Control of the links.} From the triange inequality
\begin{align}
    \sigma\left(\lambda^j_{k(j,f)} - \lambda^{j-1}_{k(j-1,f)} \right) \leq & \sigma\left(f - \lambda^j_{k(j,f)} \right) + \sigma\left(f- \lambda^{j-1}_{k(j-1,f)} \right) \\
    \leq & \sigma\left( \Delta^j_f\right) + \sigma\left( \Delta^{j-1}_f\right) \\
    \leq & \epsilon_j + \epsilon_{j-1} \\
    \lesssim & \epsilon_j.
\end{align}
Similarly
\begin{align}
    \left\lVert \left(\lambda^j_{k(j,f)} - \lambda^{j-1}_{k(j-1,f)} \right) \Ind(\tau(f) < j) \right\rVert_\infty \leq & \left\lVert\Delta^j_{k(j,f0} \Ind(\tau(f) < j) \right\rVert_\infty + \left\lVert \Delta^{j-1}_{k(j-1,f0} \Ind(\tau(f) < j) \right\rVert_\infty \\
    \leq & a_j + a_{j-1} \\
    \lesssim & a_j.
\end{align}
When $f$ varies over $\mathcal{F}$, $\lambda^j_{k(j,f)} - \lambda^{j-1}_{k(j-1,f)}$ varies over a collection of at most $\bar{N}_j := \prod_{l=0}^j N_l$ functions. From lemma \ref{lemma:cond_exp_max_finite_collection}, we thus have that
\begin{align}
     E \left[ M_n \left( \lambda^j_{k(j,f)} - \lambda^{j-1}_{k(j-1,f)} \right) \mid A \right] \lesssim & \frac{1}{\sqrt{n}} \left( \epsilon_j + \frac{M}{\sqrt{n}}\right) \sqrt{ \log \left( 1 + \frac{\bar{N}_j}{P[A]}\right)} + \frac{a_j (\log n)^2}{n} \log \left( 1 + \frac{\bar{N}_j}{P[A]}\right) \\
     \lesssim & \frac{1}{\sqrt{n}} \epsilon_j \sqrt{ \log \left( 1 + \frac{\bar{N}_j}{P[A]}\right)} + \frac{a_j (\log n)^2}{n} \log \left( 1 + \frac{\bar{N}_j}{P[A]}\right),
\end{align}
as $\epsilon_j \geq r^- \geq M n^{-1/2}$.

\paragraph{Control of the root.}
From the triangle inequality,
\begin{align}
    \sigma(\lambda^0_{k(j,f)}) \leq & \sigma( f - \lambda^0_{k(0,f)} ) + \sigma(f) \\
    \leq & 2 \epsilon_0 .
\end{align}
In addition, we have that $\| \lambda^0_{k(j,f)} \|_\infty \leq M$ (if not, we can always, without loss of generality, truncate it to $[-M,M]$ without altering its bracketing properties).

Therefore,
\begin{align}
    E \left[ \sup_{f \in \mathcal{F}} M_n(f) \mid A \right] \lesssim \frac{\epsilon_0}{ \sqrt{n}} \sqrt{\log \left( 1 +\frac{N_0}{P[A]}\right)} + \frac{M (\log n)^2}{n} \log \left( 1 +\frac{N_0}{P[A]}\right).
\end{align}

\paragraph{Adding up the bounds.} We obtain
\begin{align}
    E \left[ \sup_{f \in \mathcal{F}} M_n(f) \mid A \right] \lesssim & \frac{\epsilon_0}{\sqrt{n}} \sqrt{\log \left(1 + \frac{\bar{N}_0}{P[A}\right)} + \frac{M (\log n)^2}{n} \log \left( 1 + \frac{\bar{N}_0}{P[A]}\right) \\
    & + \sum_{j=1}^J \frac{\epsilon_j}{\sqrt{n}} \sqrt{\log\left( 1 + \frac{\bar{N}_j}{P[A]}\right)} + \frac{a_j (\log n)^2}{n} \log \left( 1 + \frac{\bar{N}_0}{P[A]}\right) \\
    &+ \sum_{j=0}^{J-1} \frac{\epsilon^2_j}{a_j} + \epsilon_J.
\end{align}
Set $a_j := \sqrt{n}  (\log n)^{-1} (\log (1 + \bar{N}_j / P[A]))^{-1/2}$. Then above bound then becomes
\begin{align}
    E \left[ \sup_{f \in \mathcal{F}} M_n(f) \mid A \right] \lesssim & \frac{\epsilon_J}{\sqrt{n}} + \frac{\log n}{\sqrt{n}} \sum_{j=0}^J \epsilon_j \sqrt{\log \left(1 + \frac{\bar{N}_j}{P[A]}\right)} + \frac{(\log n)^2}{n} M \log \left( 1 + \frac{N_0}{P[A]} \right).
\end{align}
Using the same arguments as at the end of the proof of theorem 5 in \cite{bibaut20-GPE}, we have that
\begin{align}
    \sum_{j=0}^J \epsilon_j \sqrt{\log \left(  1 + \frac{\bar{N}_j}{P[A]} \right)} \lesssim \int_{\epsilon_J}^{\epsilon_0} \sqrt{\log \left( 1 + N_{[\,]}(\epsilon, \mathcal{F}, \sigma) \right)} d \epsilon + \epsilon_0 \sqrt{\log \left(1 + \frac{1}{P[A]}\right)}.
\end{align}
The above and the fact that $\log (1 + N_0 / P[A]) \leq \log (1 + N_0) + \log (1 + 1/P[A])$ yield the wished claim.
\end{proof}

\subsection{Equicontinuity}\label{subsection:equicont_weakly_dep_EP}

\begin{theorem}\label{thm:equicont_weakly_dep_EP}
Consider a class of functions $\mathcal{F}$ and a sequence $(f_n)$ of elements of $\mathcal{F}$.

Suppose that conditions \ref{assumption:exponential_alpha_mixing}, \ref{assumption:finite_rho_mixing} and \ref{assumption:max_ineq_unif_bound} hold.

Suppose that there exists a deterministic sequence of positive numbers $(a_n)$ such that
\begin{align}
    a_n^{-2} (\log n)^2 / \sqrt{n} = o(1) \qquad \text{and} \qquad a_n^\nu \log n = o(1) \text{ for every } \nu > 0, \label{eq:rate_a_n}
\end{align}
and, for every $\epsilon > 0$, there exists $C_n(\epsilon) > 0$ such that
\begin{align}
    P \left[ \forall n \geq 1, \|f_n\|_\infty \leq C(\epsilon) a_n \right] \geq 1 - \epsilon. \label{eq:unif_bound_sup_norm_f_n}.
\end{align}
Suppose further that there
\begin{align}
    \log N_{[\,]}(u, \mathcal{F}, \sigma) \lesssim u^p \text{ for some } p > 0. \label{eq:Donsker_cond}
\end{align}
Then
\begin{align}
    \sqrt{n} M_n(f) = o(1) \text{ a.s.}
\end{align}
\end{theorem}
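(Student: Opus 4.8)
The plan is to reduce the statement to a single application, on each $n$, of the localized maximal inequality of Theorem~\ref{thm:max_ineq_empirical_mixing_process}, after localizing the random function $f_n$ to a $\sigma$-ball whose radius we control through the sup-norm bound \eqref{eq:unif_bound_sup_norm_f_n}, and then to upgrade the resulting high-probability bounds to an almost-sure statement via Borel--Cantelli. Fix $\epsilon>0$ and work on the event $E_\epsilon := \{\forall n\ge 1,\ \|f_n\|_\infty\le C(\epsilon)a_n\}$, which by \eqref{eq:unif_bound_sup_norm_f_n} has probability at least $1-\epsilon$. The elementary observation driving the localization is that, since each $h_i$ is a probability density, $\sigma(f)=\sqrt{1+2\bm{\rho}}\,\sup_{i\ge1}\|f\|_{2,h_i}\le \sqrt{1+2\bm{\rho}}\,\|f\|_\infty$; hence on $E_\epsilon$ the function $f_n$ lies in the class $\mathcal{F}_n := \{f\in\mathcal{F}:\|f\|_\infty\le C(\epsilon)a_n\}$, which sits in a $\sigma$-ball of radius $r_n := \sqrt{1+2\bm{\rho}}\,C(\epsilon)a_n$ and is uniformly bounded by $M_n := C(\epsilon)a_n$. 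Note that \eqref{eq:rate_a_n} forces $a_n\to 0$ (take $\nu$ small), so for large $n$ the hypothesis $r_n\ge 2M_n n^{-1/2}$ of Theorem~\ref{thm:max_ineq_empirical_mixing_process} holds.

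Next I would apply Theorem~\ref{thm:max_ineq_empirical_mixing_process} to $\mathcal{F}_n$ (and, by the identical argument, to $-\mathcal{F}_n$, which has the same entropy and envelope) with localized parameters $M=M_n$, $r=r_n$, cutoff $r^-=r_n^-:=M_n n^{-1/2}$, and deviation level $x=x_n:=2\log n$, using $N_{[\,]}(\cdot,\mathcal{F}_n,\sigma)\le N_{[\,]}(\cdot,\mathcal{F},\sigma)$. This yields an event of probability at least $1-4n^{-2}$ on which $\sup_{f\in\mathcal{F}_n}|M_n(f)|$ is bounded by the five-term right-hand side of that inequality. The crux is to verify that $\sqrt n$ times each term is $o(1)$ under \eqref{eq:rate_a_n} and \eqref{eq:Donsker_cond} (read as $\log N_{[\,]}(\epsilon,\mathcal{F},\sigma)\lesssim\epsilon^{-p}$ with $p\in(0,2)$). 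Concretely: $\sqrt n\,r_n^-=M_n\asymp a_n\to 0$; the entropy integral obeys $\int_0^{r_n}\sqrt{\log(1+N_{[\,]}(\epsilon,\mathcal{F},\sigma))}\,d\epsilon\lesssim r_n^{1-p/2}\asymp a_n^{1-p/2}$, so the second term contributes $(\log n)\,a_n^{1-p/2}=o(1)$ on taking $\nu=1-p/2\in(0,1)$ in \eqref{eq:rate_a_n}; the two remainder terms carrying a factor $M_n(\log n)^2/n$ give, after multiplication by $\sqrt n$, quantities $\lesssim a_n^{1-p}(\log n)^2/\sqrt n\le a_n^{-2}(\log n)^2/\sqrt n=o(1)$ and $\lesssim a_n(\log n)^3/\sqrt n=o(1)$; and the Bernstein-type term gives $r_n\sqrt{(\log n)x_n}\asymp a_n\log n=o(1)$, again using \eqref{eq:rate_a_n} with $\nu=1$.

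Having shown that $\sqrt n\,\sup_{f\in\mathcal{F}_n}|M_n(f)|\le b_n$ with a deterministic sequence $b_n\to 0$ on an event of probability at least $1-4n^{-2}$, I would close with Borel--Cantelli: since $\sum_n 4n^{-2}<\infty$, almost surely this bound holds for all sufficiently large $n$. Intersecting with $E_\epsilon$, on which $f_n\in\mathcal{F}_n$ for every $n$, gives $\sqrt n\,|M_n(f_n)|\le b_n\to 0$ eventually, so $P[\sqrt n\,M_n(f_n)\to 0]\ge P[E_\epsilon]\ge 1-\epsilon$. Letting $\epsilon\downarrow 0$ shows this probability equals $1$, which is the claim $\sqrt n\,M_n(f_n)=o(1)$ a.s.

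The main obstacle, and the point that dictates the hypotheses, is precisely \emph{why} sup-norm localization (rather than $L_2$/$\sigma$ localization, as would suffice in the i.i.d.\ Donsker theory) is indispensable. The Bernstein inequality for mixing sequences underlying Theorem~\ref{thm:max_ineq_empirical_mixing_process} leaves a floor of order $Mn^{-1/2}$ in the bound, visible both in the constraint $r^-\ge Mn^{-1/2}$ and in the tip estimate of the chaining argument. After multiplying by $\sqrt n$ this floor becomes $M$, which vanishes only if the \emph{envelope} $M=M_n$ of the localized class shrinks; controlling merely $\sigma(f_n)$ or $\|f_n\|_{2,h_i}$ would keep $M$ of constant order and leave the leading term $\sqrt n\,r_n^-=\Theta(1)$, defeating equicontinuity. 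This is the concrete reason \eqref{eq:unif_bound_sup_norm_f_n} is imposed in $\|\cdot\|_\infty$, and correspondingly why assumption~\ref{assumption:uniform_control_can_gdt_sequence} demands convergence of $\|\bar D_{T,N}(\widehat q_{T,N})-\bar D_{T,N}(q_0)\|_\infty$. The remaining bookkeeping, namely matching each of the five terms to one of the two growth conditions in \eqref{eq:rate_a_n} and checking the exponent inequality $a_n^{1-p}\le a_n^{-2}$ used above, is routine once $a_n\to 0$ and $p<2$ are in hand.
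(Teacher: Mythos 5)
Your proposal is correct and follows essentially the same route as the paper's own proof: localize $f_n$ on the event $\{\forall n,\ \|f_n\|_\infty \leq C(\epsilon)a_n\}$ to the sup-norm ball $\{f \in \mathcal{F} : \|f\|_\infty \leq C(\epsilon)a_n\}$, apply the localized maximal inequality of theorem \ref{thm:max_ineq_empirical_mixing_process} with envelope $M_n \asymp a_n$ and cutoff $r^- = M_n n^{-1/2}$, check term by term that the bound is $o(n^{-1/2})$ under \eqref{eq:rate_a_n} and \eqref{eq:Donsker_cond}, and make the statement uniform in $n$ before letting $\epsilon \to 0$. The only differences are cosmetic: the paper sets $x_n = \log(n(n+1)/\epsilon)$ and uses a direct union bound where you set $x_n = 2\log n$ and invoke Borel--Cantelli, and you are in fact slightly more careful than the paper in obtaining the two-sided bound by also applying the maximal inequality to the negated class.
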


\begin{proof}[Proof of theorem \ref{thm:equicont_weakly_dep_EP}] Let $\epsilon > 0$, and let $C(\epsilon)$ as in the conditions of the theorem, and introduce the event 
\begin{align}
    \mathcal{E}_1(\epsilon) := \left\lbrace \forall n \geq 1, \|f_n\|_\infty \leq C(\epsilon) a_n \right\rbrace.
\end{align}
Observe that under $\mathcal{E}_1(\epsilon)$, for every $n \geq 1$,
\begin{align}
    \sqrt{n} M_n(f_n) \leq \sup \left\lbrace \sqrt{n} M_n(f) : f \in \mathcal{F}, \|f_n\|_\infty \leq C(\epsilon) a_n \right\rbrace.
\end{align}
We now bound with high probability the supremum in the right-hand side above, for every $n$. Let $x_n := \log (\epsilon / (n(n+1)))$. From theorem \ref{thm:max_ineq_empirical_mixing_process}, with probability at least $1 - \epsilon / (n(n+1))$,
\begin{align}
    \sup \left\lbrace \sqrt{n} M_n(f) : f \in \mathcal{F}, \|f_n\|_\infty \leq C(\epsilon) a_n \right\rbrace \lesssim \psi_n(\epsilon, a_n, x_n),
\end{align}
with 
\begin{align}
    \psi_n(\epsilon, a_n, x_n) :=& C(\epsilon) a_n + \log n \int_{\frac{C(\epsilon) a_n}{ \sqrt{n}}}^{C(\epsilon) a_n} u^{-p/2} du + (C(\epsilon) a_n)^{-p} \frac{(\log n)^2}{\sqrt{n}}\\
    &+ C(\epsilon) a_n \sqrt{\log (n(n+1) / \epsilon) \log n} + \frac{C(\epsilon) a_n}{\sqrt{n}} \log(n(n+1) / \epsilon) (\log n)^2.
\end{align}

Therefore, from a union bound,
\begin{align}
    P\left[ \exists n \geq 1 \sqrt{n} M_n(f_n) \geq \psi_n(\epsilon, a_n, x_n) \right] \leq & P[\mathcal{E}_1(\epsilon)^c] + \sum_{n=1}^\infty \frac{\epsilon}{n(n+1)} \\
    \leq & 2 \epsilon.
\end{align}
Since, for every $\epsilon > 0$, condition \ref{eq:rate_a_n} implies that $\psi_n(\epsilon, a_n, x_n) = o(1)$, the above implies that,
\begin{align}
    \forall \epsilon > 0, \ P\left[ \lim_{n \to \infty} \sqrt{n} M_n(f) = 0 \right] \leq 2 \epsilon,
\end{align}
which,  by letting $\epsilon \to 0$, implies the wished claim.
\end{proof}

\paragraph{Discussion of the supremum norm convergence requirement.} As we pointed out in the main text, it might appear surprising at first that even though our Donsker condition involves the entropy w.r.t. the $\sigma$ norm, which is an $L_2$ norm, we do need convergence in sup norm of $(f_n)_{n \geq 1}$.

The reason why this is the case can be understood from the expression and conditions of our maximal inequality for weakly dependent empirical processes, theorem \ref{thm:max_ineq_empirical_mixing_process} from the previous subsection. Recall that this result tells us that, under mixing conditions, given a class of functions $\mathcal{F}$ such that $\sup_{f \in \mathcal{F}} \sigma(f) \leq r$, and $\sup_{f \in \mathcal{F}} \|f \|_\infty \leq M$, for some $M, r > 0$, then for any $r^- \geq M / \sqrt{n}$, it holds with probability at least $1-2e^{-x}$ that
\begin{align}
    \sup_{f \in \mathcal{F}} M_n(f) \leq r^- + \frac{\log n}{\sqrt{n}} \int_{r^-}^r \sqrt{\log (1 + N_{[\,]}(\epsilon, \mathcal{F}, \sigma)} d \epsilon + r \log n \sqrt{\frac{x}{n}} + \frac{(\log n)^2 M x}{n}, \label{eq:max_ineq_restated}
\end{align}
where $M_n(f) = n^{-1} \sum_{i=1}^n f(X_i) - E f(X_i)$. Suppose that we want to prove an asymptotic equicontinuity result of the form $M_n(f_n) = o_P(n^{-1/2})$ for a certain sequence $(f_n)_{n \geq 1}$, while only assuming that $\sigma(f_n) = o_P(1)$ and not making any assumptions on $(\|f_n\|_\infty)_{n \geq 1}$. Then, as $n\to \infty$, we can bound $M_n(f_n)$ with high probability by the supremum of $M_n(f)$ over subsets of $\mathcal{F}$ with $\sigma$ radius arbitrarily close to zero. This allows us to bound $M_n(f_n)$ with high probability by the right-hand side above with the upper bound $r$ of the entropy integral arbitrarily close to zero. Unfortunately, letting the upper bound of the entropy integral converge to zero isn't enough to make the expression converge to zero faster than $n^{-1/2}$. Indeed, since the term $r^-$ needs to be at least as large as $M / \sqrt{n}$, with $M$ an upper bound on the $\|\cdot\|_\infty$ radius of the class over which we take the supremum, we need to be able to let $M$ get arbitrarily close to zero with high probability to make the RHS of \eqref{eq:max_ineq_restated} go to zero faster than $n^{-1/2}$. This explains why, given our maximal inequality \eqref{eq:max_ineq_restated}, we need to control $(\|f_n\|_\infty)_{n \geq 1}$. 

That being said, one might still wonder why in our maximal inequality the lower bound $r^-$ of the entropy integral needs to be larger than $M / \sqrt{n}$, thus making us pay an approximation error price $r^- \geq M / \sqrt{n}$. The reason is that we obtain our result by applying a chaining device to the following deviation bound \cite{merlevede2009} for fixed $f$. 
Under exponential $\alpha$-mixing (condition \ref{assumption:exponential_alpha_mixing}) and finiteness of the sum of $\rho$-mixing coefficients, if $\| f \|_\infty \leq M$, their result gives that
\begin{align}
    P \left[ \sqrt{n} M_n(f) \geq x  \right] \lesssim \exp \left( - \frac{x^2}{\sigma(f)^2+ \frac{M^2}{n} + \frac{M x (\log n)^2}{\sqrt{n}} }\right), \label{eq:Merlevede_et_al_bound}
\end{align}
Compare this with the usual Bernstein inequality i.i.d. random variables:
\begin{align}
     P \left[ \sqrt{n} M_n(f) \geq x \right] \lesssim \exp\left(- \frac{x^2}{\|f\|_{2,P}^2 + \frac{M x}{\sqrt{n}} } \right) 
\end{align}
While the i.i.d. Bernstein inequality implies that $\sqrt{n}M_n(f)$ scales as the $L_2$ norm $\|f\|_{P,2}$ (as long as the ratio $\|f\|_{2,P} / \|f\|_\infty \gtrsim 1/\sqrt{n}$), the concentration bound for mixing sequences implies that it scales as $\sigma(f) + \|f\|_\infty / \sqrt{n}$. In the chaining argument, we consider $\epsilon_j$-bracketings in $\|\cdot\|_{2,P}$ norm in the i.i.d. case, and in $\sigma$ norm in the weakly dependent case, with $\epsilon_j = r 2^{-j}$, for increasingly large $j$, where $j$ has the interpretation of the depth of the chains.


Let us first discuss chaining in the i.i.d. case, and in the case where we want to obtain a bound on $E_P \sqrt{n} \sup_{f \in \mathcal{F}} M_n(f)$ (as opposed to obtaining a high probability bound on $\sup_{f \in \mathcal{F}} M_n(f)$, which is slightly more technical). Denote $\{ [\lambda_{j,k}, \upsilon_{j,k} ] : k \in [N_j] \}$ the $\epsilon_j$-bracketing of $\mathcal{F}$ used in the chaining device.  The contribution of depth $j$ to the final bound on $E_P\sqrt{n} \sup_{f \in \mathcal{F}} M_n(f)$ is a supremum over links $\lambda_{j, k} - \lambda_{j-1,k}$ between depths $j$ and $j-1$, which can essentially be bounded, using Bernstein's inequality, by
\begin{align}
 \sup_{k \in [N_j], k \in [N_{j-1}]} \|\lambda_{j, k} - \lambda_{j-1,k}\|_{P,2} \sqrt{\log N_{[\,]} (\epsilon_j,\mathcal{F},\|\cdot\|_{P,2})} \lesssim \epsilon_j \sqrt{\log N_{[\,]} (\epsilon_j,\mathcal{F},\|\cdot\|_{P,2})}.
\end{align}
(To be rigorous, the bound obtained from Bernstein's inequality has another term, but in adaptive chaining, we choose the maximal depth of the chains so that this term is no larger than the first one above).
By contrast, in the weakly dependent case, the concentration bound \eqref{eq:Merlevede_et_al_bound} gives that the corresponding contribution is bounded by
\begin{align}
 &\left( \sup_{k \in [N_j], k \in [N_{j-1}]} \sigma(\lambda_{j, k} - \lambda_{j-1,k}) + \frac{M}{\sqrt{n}} \right) \sqrt{\log N_{[\,]} (\epsilon_j,\mathcal{F},\sigma)} \\
 \lesssim & \left(\epsilon_j  + \frac{M}{\sqrt{n}} \right) \sqrt{\log N_{[\,]} (\epsilon_j,\mathcal{F},\sigma)}.
\end{align}
A consequence of this is that when the depth $j$ of the chain is such that $\epsilon_j \leq M / \sqrt{n}$, then the term $M/\sqrt{n}$ becomes the main scaling factor. It can be checked that as result of this, the sum of these bounds diverges as $j \to \infty$. This is why in our chaining decomposition, we impose that our chains must have depth no larger that $J$ such that $\epsilon_J \geq M / \sqrt{n}$. This gives us a bound involving an entropy integral with lower bound $\epsilon_J$ and an approximation error $\sqrt{n}\epsilon_J$.


\subsection{Exponential deviation bound for empirical risk minimizers}

Let $\ell$ be a functional defined on $\mathbb{R}^{\mathcal{X}}$, the space of functions $\mathcal{X} \to \mathbb{R}$, such that, for every $\mathbb{R}^{\mathcal{X}}$, $\ell(f)$ is a function $\mathcal{X} \to \mathbb{R}$. We call $\ell$ a loss function. For every $f : \mathcal{X} \to \mathbb{R}$, we define the population risk and the empirical risk as
\begin{align}
    R_n(f) := \frac{1}{n} \sum_{i=1}^n E[\ell(f)(X_i)] \qquad \text{and} \qquad
    \widehat{R}_n(f) := \frac{1}{n} \sum_{i=1}^n \ell(f)(X_i).
\end{align}
Let $f_n$ be an empirical risk minimizer over $\mathcal{F}$, that is an element of $\mathcal{F}$ such that 
\begin{align}
    \widehat{R}_n(f_n) = \inf_{f \in \mathcal{F}} \widehat{R}_n(f).
\end{align}
and $f^* \in \mathcal{F}$ be a minimizer of the population risk, that is a function such that $R_n(f^*) = \inf_{f \in \mathcal{F}} R_n(f)$. In this section, we give exponential bounds on the excess population risk $R_n(f_n) - R_n(f^*)$, and on the norm $\sigma(f - f^*)$. We rely on the following assumptions.

\begin{assumption}[Variance bound]\label{assumption:var_bound} For every $f \in \mathcal{F}$, $\sigma^2(\ell(f) - \ell(f^*)) \lesssim (R_n(f) - R_n(f^*))$.
\end{assumption}
Assumption \ref{assumption:var_bound} can be checked in common settings, such as in non-parametric regression settings when $\ell$ is the square loss and the dependent variable has bounded range. 

\begin{assumption}[A power of $\sigma$ dominates $\|\cdot\|_\infty$ over $\ell(\mathcal{F})$]\label{assumption:sigma_alpha_dominates_sup_norm} There exists $\alpha \in (0,1)$ such that, for all $f \in \mathcal{F}$, $\| \ell(f) - \ell(f^*) \|_\infty \lesssim (\sigma(\ell(f) - \ell(f^*)))^\alpha$.
\end{assumption}

\begin{assumption}[Excess risk dominates norm of difference]\label{assumption:excess_risk_dominates_norm_of_diff}
Suppose that $\sigma^2(f- f^*) \lesssim R_n(f) - R_n(f^*)$ for every $f \in \mathcal{F}$.
\end{assumption}

Assumption \ref{assumption:sigma_alpha_dominates_sup_norm} holds for instance if all functions in $\ell(\mathcal{F})$ are all $L$-Lipschitz for the same $L$, as formally presented in lemma \ref{lemma:sup_norm_bound_Lipschitz}.

\begin{assumption}[Entropy]\label{assumption:Donsker_loss_class}
There exists $p \in (0,2)$ such that
\begin{align}
    \log N_{[\,]}(\epsilon, \ell(\mathcal{F}), \sigma) \lesssim \epsilon^{-p}.
\end{align}
\end{assumption}

\begin{theorem}[Exponential deviation bound for ERM]\label{thm:exp_bound_for_ERM}
Suppose that $\mathcal{F}$ is a convex set, and that $\ell$ is convex on $\mathcal{F}$. Suppose that assumptions \ref{assumption:exponential_alpha_mixing}, \ref{assumption:finite_rho_mixing}, \ref{assumption:var_bound}, \ref{assumption:sigma_alpha_dominates_sup_norm}, \ref{assumption:excess_risk_dominates_norm_of_diff} and \ref{assumption:Donsker_loss_class} hold. Let
\begin{align}
    \phi_n : r \mapsto \frac{r^\alpha}{\sqrt{n}} + \frac{\log n}{\sqrt{n}} r^{1-p/2} + \frac{(\log n)^2}{n} r^{\alpha - p},
\end{align}
and let $r_n > 0$ such that $r_n^2/3 = \phi_n(r_n)$ (there exists such an $r_n$ from lemma \ref{lemma:sublinear_functions} applied to $3 \phi_n$). Let $r > 0$ such that
\begin{align}
    r \geq \max \left\lbrace n^{-\frac{1}{2(1-\alpha)}}, r_n, \sqrt{3} \log n \sqrt{\frac{x}{n}}, (\log n)^{\frac{2}{2-\alpha}} \left( \frac{3x}{n} \right)^{\frac{1}{2-\alpha}}\right\rbrace.
\end{align}
Then, with probability at least $1- 2 e^{-x}$, $R_n(f_n) - R_n(f^*) \lesssim r^2$ and $\sigma(f_n -f^*) \lesssim r$.
\end{theorem}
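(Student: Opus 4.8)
The plan is to combine the basic inequality for $M$-estimators with the convexity hypotheses to localize to a single shell, and then invoke the local maximal inequality, Theorem \ref{thm:max_ineq_empirical_mixing_process}, on that shell. Throughout write $M_n(g) := n^{-1}\sum_{i=1}^n g(X_i) - E[g(X_i)]$ for the centered empirical process, so that for every $f$ one has $R_n(f) - R_n(f^*) = (\widehat{R}_n(f) - \widehat{R}_n(f^*)) - M_n(\ell(f) - \ell(f^*))$.

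First I would localize. Fix $r$ satisfying the hypotheses, let $K$ be a large universal constant to be absorbed into the final $\lesssim$, and work on the event $\mathcal{A} := \{R_n(f_n) - R_n(f^*) > K r^2\}$. Since $\ell$ is convex on the convex set $\mathcal{F}$, the map $t \mapsto R_n(f^* + t(f_n - f^*))$ is convex on $[0,1]$ with minimum at $t=0$, hence nondecreasing; on $\mathcal{A}$ it runs continuously from $0$ to a value exceeding $K r^2$, so there is $t^\star \in (0,1)$ and a point $\tilde{f}_n := f^* + t^\star(f_n - f^*) \in \mathcal{F}$ with $R_n(\tilde{f}_n) - R_n(f^*) = K r^2$. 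Convexity of $\widehat{R}_n$ together with $\widehat{R}_n(f_n) \leq \widehat{R}_n(f^*)$ gives $\widehat{R}_n(\tilde{f}_n) \leq \widehat{R}_n(f^*)$, so the basic inequality yields $K r^2 = R_n(\tilde{f}_n) - R_n(f^*) \leq |M_n(\ell(\tilde{f}_n) - \ell(f^*))|$. This is the step in which convexity replaces the usual peeling over shells: it forces a single crossing of the sphere of excess risk $K r^2$.

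Next I would bound the empirical process on the localized class. Writing $g := \ell(\tilde{f}_n) - \ell(f^*)$, the variance bound (Assumption \ref{assumption:var_bound}) gives $\sigma(g) \lesssim r$ and Assumption \ref{assumption:sigma_alpha_dominates_sup_norm} gives $\|g\|_\infty \lesssim r^\alpha$. Hence $\mathcal{A}$ entails $\sup_{g \in \mathcal{G}_r} |M_n(g)| \geq K r^2$, where $\mathcal{G}_r := \{\pm(\ell(f) - \ell(f^*)) : \sigma(\ell(f)-\ell(f^*)) \lesssim r,\ \|\ell(f)-\ell(f^*)\|_\infty \lesssim r^\alpha\}$ (the symmetrization lets one pass from $M_n$ to $|M_n|$ in a single application). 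I would bound $P[\mathcal{A}]$ by Theorem \ref{thm:max_ineq_empirical_mixing_process} applied to $\mathcal{G}_r$ with sup-norm bound $M \lesssim r^\alpha$ and $\sigma$-radius $\lesssim r$, choosing the free lower endpoint at its smallest admissible value $r^- = r^\alpha n^{-1/2}$. The admissibility constraint $r \geq 2M n^{-1/2}$, i.e. $r \gtrsim r^\alpha n^{-1/2}$, is exactly the hypothesis $r \geq n^{-1/(2(1-\alpha))}$. Using the entropy bound (Assumption \ref{assumption:Donsker_loss_class}), the entropy integral evaluates as $\int_{0}^{r} \epsilon^{-p/2} d\epsilon \lesssim r^{1-p/2}$ (finite since $p<2$) and $\log(1 + N_{[\,]}(r, \mathcal{G}_r, \sigma)) \lesssim r^{-p}$, so the deterministic part of the maximal-inequality bound is precisely $\phi_n(r)$, while the stochastic part is $\lesssim r\sqrt{(\log n)x/n} + r^\alpha (\log n)^2 x / n$.

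Finally I would check each of these three pieces is $\lesssim r^2$. Since the exponents $\alpha$, $1-p/2$ and $\alpha - p$ are all strictly below $2$, the map $r \mapsto \phi_n(r)/r^2$ is decreasing; together with the defining relation $\phi_n(r_n) = r_n^2/3$ (solvable by Lemma \ref{lemma:sublinear_functions} applied to $3\phi_n$) and $r \geq r_n$ this gives $\phi_n(r) \leq r^2/3$. The remaining two terms are pinned by the other two lower bounds on $r$: $r \geq \sqrt{3}\,(\log n)\sqrt{x/n}$ yields $r\sqrt{(\log n)x/n} \lesssim r^2$, and $r \geq (\log n)^{2/(2-\alpha)}(3x/n)^{1/(2-\alpha)}$ yields $r^\alpha (\log n)^2 x/n \leq r^2/3$. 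Summing, the maximal-inequality bound is at most a universal constant times $r^2$ with probability at least $1 - 2e^{-x}$; choosing $K$ above that constant makes the inequality $K r^2 \leq \sup_{g \in \mathcal{G}_r}|M_n(g)|$ contradictory, so $P[\mathcal{A}] \leq 2e^{-x}$, which is the claim $R_n(f_n) - R_n(f^*) \lesssim r^2$. The bound $\sigma(f_n - f^*) \lesssim r$ then follows on the complement of $\mathcal{A}$ directly from Assumption \ref{assumption:excess_risk_dominates_norm_of_diff}. I expect the main obstacle to be the bookkeeping in this last matching step — in particular fixing the bracket scale at $r^- = r^\alpha n^{-1/2}$ and verifying that the admissibility constraint $r^- \geq M n^{-1/2}$ is exactly the stated lower bound $r \geq n^{-1/(2(1-\alpha))}$ — since all of the genuine mixing content is already packaged inside Theorem \ref{thm:max_ineq_empirical_mixing_process}.
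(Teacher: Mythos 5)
Your proposal is correct and follows essentially the same route as the paper: convexity of $\mathcal{F}$ and of $\ell$ localizes the ERM to the single shell where the population excess risk equals (a constant times) $r^2$, the variance bound and sup-norm domination assumptions convert that shell into a class of $\sigma$-radius $\lesssim r$ and $\|\cdot\|_\infty$-radius $\lesssim r^\alpha$, and Theorem \ref{thm:max_ineq_empirical_mixing_process} applied with $r^- = r^\alpha/\sqrt{n}$ (admissible precisely because $r \geq n^{-1/(2(1-\alpha))}$) yields the three terms you match against the three remaining lower bounds on $r$. The only cosmetic differences are that the paper packages the shell bound as a separate lemma (lemma \ref{lemma:intermediate_lemma_ERM}) and is less careful than you about the sign of $M_n(\ell(f)-\ell(f^*))$, which your symmetrized class $\mathcal{G}_r$ handles cleanly.
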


The proof of \ref{thm:exp_bound_for_ERM} is a relatively straightforward adaptation of the proof of lemma 13 in \cite{bartlett_jordan_mcauliffe2006}. It relies on the following two intermediate lemmas

\begin{lemma}\label{lemma:sublinear_functions}
Let $\phi:(0,\infty) \to \mathbb{R}_+$ such that $r \mapsto \phi(r) / r$ is strictly decreasing on $(0,\infty)$ and $\lim_{r \to 0^+} \phi(r) / r > 1$. Then, there exists a unique $r_* \in (0,\infty)$ such that $r_*^2 = \phi(r_*)$, and for any $r \in (0,\infty)$, $r^2 \geq \phi(r)$ if and only if $r \geq r^*$.
\end{lemma}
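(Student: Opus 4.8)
The plan is to recast the equation $r^2 = \phi(r)$ as a root-finding problem for a manifestly strictly decreasing function, so that both existence/uniqueness and the ``iff'' characterization fall out of elementary monotonicity. Since $r>0$, dividing the inequality $r^2 \geq \phi(r)$ by $r$ shows it is equivalent to $r \geq \phi(r)/r$. I would therefore set $\psi(r) := \phi(r)/r$ and $g(r) := \psi(r) - r$, so that for every $r>0$,
\begin{align}
    r^2 \geq \phi(r) \iff g(r) \leq 0, \qquad \text{and} \qquad r^2 = \phi(r) \iff g(r) = 0.
\end{align}
The entire lemma then reduces to locating the unique zero of $g$ and reading off the sign of $g$ on either side of it.

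First I would record that $g$ is strictly decreasing on $(0,\infty)$: it is the sum of $\psi$, which is strictly decreasing by hypothesis, and $r \mapsto -r$, which is strictly decreasing as well. This immediately yields uniqueness, since a strictly decreasing function attains each value at most once. Next I would pin down the boundary behaviour to get existence. Near the origin, $g(r) \to \lim_{r\to 0^+}\psi(r) > 1 > 0$ (the limit being $+\infty$ is equally fine), so $g$ is strictly positive on a right-neighbourhood of $0$ using only the limit hypothesis. At the other end, fix any $r_0>0$; since $\psi \geq 0$ is decreasing, $\psi(r)\leq \psi(r_0)$ for $r\geq r_0$, whence $g(r)\leq \psi(r_0)-r \to -\infty$, so $g$ is eventually strictly negative. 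Choosing $r_1$ with $g(r_1)>0$ and $r_2>r_1$ with $g(r_2)<0$, the intermediate value theorem produces $r_* \in (r_1,r_2)$ with $g(r_*)=0$, i.e.\ $r_*^2 = \phi(r_*)$, and strict monotonicity makes it unique.

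Finally, the ``iff'' is just the sign pattern of a strictly decreasing function around its zero: $g(r)\leq 0 \iff r \geq r_*$, which by the equivalence displayed above is exactly $r^2 \geq \phi(r) \iff r \geq r_*$. This completes the characterization.

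The \emph{only point requiring care} is that the existence step invokes the intermediate value theorem, hence the continuity of $\phi$ (equivalently of $\psi$); strict monotonicity alone would only furnish a threshold $r_* := \sup\{r>0 : \phi(r) > r^2\}$ at which the equality $r_*^2 = \phi(r_*)$ need not hold. This is not a genuine obstacle in context: in every application of the lemma (notably $\phi = 3\phi_n$ in the proof of Theorem \ref{thm:exp_bound_for_ERM}) the function $\phi$ is a finite sum of power functions, hence continuous, so the argument goes through verbatim.
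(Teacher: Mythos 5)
Your proof is correct, and it rests on the same elementary idea as the paper's: reduce the comparison $r^2 \geq \phi(r)$ to the sign of a single strictly monotone auxiliary function. The paper does this with the quotient $r \mapsto \phi(r)/r^2$, observing that it is strictly decreasing as the product of the two nonnegative strictly decreasing maps $r \mapsto \phi(r)/r$ and $r \mapsto 1/r$, and then declares the whole claim to "follow directly"; you do it with the difference $g(r) = \phi(r)/r - r$, which plays the identical role. Where you genuinely go beyond the paper is on existence: the paper's one-line proof never argues that the value $1$ is actually attained by $\phi(r)/r^2$, whereas you pin down the boundary behaviour ($g>0$ near $0$, $g \to -\infty$ at infinity) and invoke the intermediate value theorem, and your closing caveat about continuity is not pedantry but identifies a real gap. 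Indeed, without continuity the lemma as stated is false: take $\phi(r)/r$ equal to $3-r$ on $(0,1]$ and to $e^{-r}$ on $(1,\infty)$; this is strictly decreasing, nonnegative, and has limit $3>1$ at $0^+$, yet $\phi(r)/r - r$ equals $3-2r \geq 1$ on $(0,1]$ and is negative on $(1,\infty)$, so it never vanishes and no $r_*$ with $r_*^2=\phi(r_*)$ exists (nor does the "iff" hold with $\geq$ for any choice of threshold). The paper's proof silently inherits this gap; your observation that every $\phi$ to which the lemma is applied (e.g.\ $3\phi_n$ in theorem \ref{thm:exp_bound_for_ERM}) is a finite sum of power functions, hence continuous, is exactly the repair needed, and continuity really ought to be added as a hypothesis of the lemma.
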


\begin{lemma}\label{lemma:intermediate_lemma_ERM}
Suppose that the assumptions of theorem \ref{thm:exp_bound_for_ERM} hold and let $r_n$ and $r$ be as defined in theorem \ref{thm:exp_bound_for_ERM}. Then, there exists a constant $C> 0$ such that, for any $x > 0$,
\begin{align}
    P \left[ \sup \left\lbrace M_n(\ell(f) - \ell(f^*)) : f \in \mathcal{F}, R_n(f) - R_n(f^*) \leq r^2  \right\rbrace \geq C r^2 \right] \leq 2 e^{-x}.
\end{align}
\end{lemma}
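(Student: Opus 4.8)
The plan is to follow the localized-analysis template for empirical risk minimizers used in the proof of lemma 13 of \cite{bartlett_jordan_mcauliffe2006}, but with the i.i.d.\ symmetrization machinery replaced throughout by the mixing maximal inequality of theorem \ref{thm:max_ineq_empirical_mixing_process}. The entry point is the ERM basic inequality: since $\widehat{R}_n(f_n) \leq \widehat{R}_n(f^*)$ and $M_n(\ell(f) - \ell(f^*)) = [\widehat{R}_n(f) - \widehat{R}_n(f^*)] - [R_n(f) - R_n(f^*)]$, we get $R_n(f_n) - R_n(f^*) \leq -M_n(\ell(f_n) - \ell(f^*))$. Hence controlling the excess risk reduces to a uniform control of the recentered loss-increment process over excess-risk neighborhoods of $f^*$; since $\sup(-M_n)$ is bounded exactly like $\sup M_n$ by applying theorem \ref{thm:max_ineq_empirical_mixing_process} to the negated class, this is precisely what the two intermediate lemmas deliver.

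I would first establish lemma \ref{lemma:intermediate_lemma_ERM} by applying the maximal inequality to the localized loss class $\mathcal{G}_r := \{ \ell(f) - \ell(f^*) : f \in \mathcal{F},\ R_n(f) - R_n(f^*) \leq r^2 \}$. On $\mathcal{G}_r$ the variance bound (assumption \ref{assumption:var_bound}) gives a $\sigma$-radius $\lesssim r$, while assumption \ref{assumption:sigma_alpha_dominates_sup_norm} supplies the uniform sup-norm bound $M \lesssim r^\alpha$; the admissible floor choice $r^- = r^\alpha n^{-1/2}$ is $\leq r$ precisely because $r \geq n^{-1/(2(1-\alpha))}$. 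Plugging the entropy bound of assumption \ref{assumption:Donsker_loss_class} into theorem \ref{thm:max_ineq_empirical_mixing_process}, the entropy integral is $\int_{r^-}^r \epsilon^{-p/2}\,d\epsilon \lesssim r^{1-p/2}$ (finite since $p<2$), and the deterministic part of the bound collapses to a constant multiple of $\phi_n(r)$, matching its three summands $r^\alpha n^{-1/2}$, $n^{-1/2}(\log n) r^{1-p/2}$, and $n^{-1}(\log n)^2 r^{\alpha - p}$. Lemma \ref{lemma:sublinear_functions} applied to $3\phi_n$ then guarantees this part is $\leq r^2/3$ as soon as $r \geq r_n$, while the two $x$-dependent deviation terms $r\sqrt{(\log n)x/n}$ and $r^\alpha (\log n)^2 x/n$ are each $\leq r^2$ thanks to the lower bounds $r \geq \sqrt{3}\,\log n\,\sqrt{x/n}$ and $r \geq (\log n)^{2/(2-\alpha)}(3x/n)^{1/(2-\alpha)}$. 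Summing yields the localized supremum $\leq C r^2$ with probability at least $1 - 2e^{-x}$.

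I would then deduce theorem \ref{thm:exp_bound_for_ERM} by a peeling argument over dyadic excess-risk shells $\{ 2^j r^2 < R_n(f) - R_n(f^*) \leq 2^{j+1} r^2 \}$, $j \geq 0$. On the event $\{ R_n(f_n) - R_n(f^*) > C' r^2 \}$ the minimizer falls in one such shell, and the basic inequality forces $-M_n(\ell(f_n) - \ell(f^*)) > 2^j r^2$; applying lemma \ref{lemma:intermediate_lemma_ERM} at radius $2^{(j+1)/2} r$ and level $x + j$ bounds the shell supremum by a constant times $2^j r^2$, a contradiction once $C'$ is large. Here convexity of $\mathcal{F}$ and of $\ell$ is used to ensure the excess-risk sublevel sets are star-shaped around $f^*$, so that the localized complexity scales in the radius as the peeling requires. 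A union bound over $j$ with the geometrically growing levels $x + j$ keeps the total failure probability at most $2 e^{-x}$, giving $R_n(f_n) - R_n(f^*) \lesssim r^2$; the companion conclusion $\sigma(f_n - f^*) \lesssim r$ is then immediate from assumption \ref{assumption:excess_risk_dominates_norm_of_diff}.

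The main obstacle is the bookkeeping forced by the dependence rather than any single conceptual step. The mixing maximal inequality carries an irreducible $\|\cdot\|_\infty$-driven floor $r^- \geq M n^{-1/2}$ and extra $\log n$ factors absent in the i.i.d.\ case, so the sup-norm control $M \lesssim r^\alpha$ from assumption \ref{assumption:sigma_alpha_dominates_sup_norm} (equivalently lemma \ref{lemma:sup_norm_bound_Lipschitz}) is not a convenience but is essential for keeping $\phi_n$ sub-linear, and the hypothesis $\alpha < 1$ is exactly what makes the term $r^\alpha n^{-1/2}$ negligible against $r^2$ near the fixed point. The delicate parts are verifying that $3\phi_n$ meets the hypotheses of lemma \ref{lemma:sublinear_functions} — strict decrease of $r \mapsto \phi_n(r)/r$ and $\lim_{r\to 0^+}\phi_n(r)/r > 1/3$ — and checking that, after the $\log n$ inflation, every dyadic shell bound still sits below its target $2^j r^2$ so that the peeling union bound closes.
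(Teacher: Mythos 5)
Your proof of the lemma is correct and takes essentially the same route as the paper: localize via the variance bound (assumption \ref{assumption:var_bound}) to a $\sigma$-ball of radius $\lesssim r$, use assumption \ref{assumption:sigma_alpha_dominates_sup_norm} to get the sup-norm radius $\lesssim r^\alpha$, apply theorem \ref{thm:max_ineq_empirical_mixing_process} with $r^- = r^\alpha/\sqrt{n}$ (admissible since $r \geq n^{-1/(2(1-\alpha))}$), and absorb the deterministic part into $r^2/3$ via lemma \ref{lemma:sublinear_functions} and the $x$-dependent terms via the lower bounds defining $r$. (Your peeling-based deduction of theorem \ref{thm:exp_bound_for_ERM} in the last paragraph departs from the paper's convexity argument, but that concerns the theorem rather than the lemma under review, whose proof here matches the paper's.)
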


\begin{proof}[Proof of lemma \ref{lemma:sublinear_functions}] The claim follows directly from the fact that, since both $r \mapsto \phi(r) / r$ and $r \mapsto 1/r$ are strictly decreasing on $(0,\infty)$, $r \mapsto \phi(r) / r^2$ is also strictly decreasing on $(0,\infty)$.
\end{proof}

\begin{proof}[Proof of lemma \ref{lemma:intermediate_lemma_ERM}]
From assumption \ref{assumption:var_bound}, 
\begin{align}
    & P \left[\sup \left\lbrace M_n(\ell(f) - \ell(f^*)) : f \in \mathcal{F}, R_n(f) - R_n(f^*) \leq r^2  \right\rbrace \gtrsim r^2 \right] \\
     \leq & P \left[ \sup \left\lbrace M_n(\ell(f) - \ell(f^*)) : f \in \mathcal{F}, \sigma(\ell(f) - \ell(f^*)) \lesssim r  \right\rbrace \gtrsim r^2 \right]. \label{eq:intermediate_lemma_ERM_eq1}
\end{align}

Under assumption \ref{assumption:sigma_alpha_dominates_sup_norm}, $\| M_n(\ell(f) - \ell(f^*)) \|_\infty \lesssim r^\alpha$ for any $f \in \mathcal{F}$ such that $\sigma(\ell(f) - \ell(f^*)) \lesssim r$. Therefore, since $r > n^{-1/(2(1-\alpha))}$, implies $r > r^\alpha n^{-1/2}$, applying theorem \ref{thm:max_ineq_empirical_mixing_process} with $r^- = r^\alpha / \sqrt{n}$, we have that
\begin{align}
    P \left[ \sup \left\lbrace M_n(\ell(f) - \ell(f^*)) : f \in \mathcal{F}, \sigma(\ell(f) - \ell(f^*)) \lesssim r  \right\rbrace \gtrsim \psi_n(r,x) \right] \leq 2 e^{-x},
\end{align}
with 
\begin{align}
    \psi_n(r,x) :=& \frac{r^\alpha}{\sqrt{n}} + \frac{\log n}{\sqrt{n}} \int_{\frac{r^\alpha}{\sqrt{n}}}^r u^{-p/2} du + \frac{(\log n)^2}{n} r^{\alpha - p} \\
    &+ r \log n \sqrt{\frac{x}{n}} + r^\alpha (\log n)^\alpha \frac{x}{n}.
\end{align}
Observe that 
\begin{align}
    \psi_n(r,x) \leq \phi_n(r) + r \log n \sqrt{\frac{x}{n}} + r^\alpha (\log n)^\alpha \frac{x}{n}.\label{eq:intermediate_lemma_ERM_eq2}
\end{align}
From the definition of $r$ in the statement of theorem \ref{thm:max_ineq_empirical_mixing_process}, we have $r \geq r_n$, which from lemma \ref{lemma:sublinear_functions} implies that $r^2 /3 \geq \phi_n(r)$. We also have $r^2/3 \geq r \log n \sqrt{x/n}$, and $r^2 / 3 \geq r^\alpha (\log n)^2 x / n$. Therefore, $r^2 \geq \psi_n(r,x)$. Therefore, from \eqref{eq:intermediate_lemma_ERM_eq1} and \eqref{eq:intermediate_lemma_ERM_eq2}, we have 
\begin{align}
    &P \left[ \sup \left\lbrace M_n(\ell(f) - \ell(f^*)) : f \in \mathcal{F}, \sigma(\ell(f) - \ell(f^*)) \lesssim r  \right\rbrace \gtrsim r^2 \right] \\
    \leq & P \left[ \sup \left\lbrace M_n(\ell(f) - \ell(f^*)) : f \in \mathcal{F}, \sigma(\ell(f) - \ell(f^*)) \lesssim r  \right\rbrace \gtrsim \psi_n(r,x) \right] \\
    \leq & 2 e^{-x}.
\end{align}
\end{proof}

\begin{proof}[Proof of theorem \ref{thm:exp_bound_for_ERM}] From the convexity of $\mathcal{F}$ and the convexity of $\ell$ on $\mathcal{F}$, the following assertion holds for every $r \geq 0$:
\begin{align}
    \exists f \in \mathcal{F},\ \widehat{R}_n(f) - \widehat{R}_n(f^*) \leq 0 \text{ and } R_n(f) - R_n(f^*) \geq r^2
\end{align}
implies that
\begin{align}
    \exists f \in \mathcal{F},\ \widehat{R}_n(f) - \widehat{R}_n(f^*) \leq 0 \text{ and } R_n(f) - R_n(f^*) = r^2.
\end{align}
Using this fact, and the fact that by definition of $f_n$, $\widehat{R}_n(f_n) - \widehat{R}_n(f^*) \leq 0$, we have
\begin{align}
    & P \left[ \widehat{R}_n(f_n) - \widehat{R}_n(f^*) \geq r^2 \right] \\
    \leq & P \left[ \exists f \in \mathcal{F}, \widehat{R}_n(f) - \widehat{R}_n(f^*) \leq 0 \text{ and } R_n(f) - R_n(f^*) \geq r^2 \right] \\
    \leq &  P \left[ \exists f \in \mathcal{F}, \widehat{R}_n(f) - \widehat{R}_n(f^*) \leq 0 \text{ and } R_n(f) - R_n(f^*) = r^2 \right] \\
    \leq & P \left[ \sup \left\lbrace M_n(\ell(f) - \ell(f^*)) : f \in \mathcal{F},\ \sigma(\ell(f) - \ell(f^*)) \lesssim r \right\rbrace \geq r^2 \right] \\
    \leq & 2 e^{-x}
\end{align}
from lemma \ref{lemma:intermediate_lemma_ERM}
\end{proof}

\section{Proofs for the analysis of the TMLE}

\subsection{Proof of lemma \ref{lemma:stablization_variance_EIF} on the stabilization of the variance of the EIF}

\begin{proof}[Proof of lemma \ref{lemma:stablization_variance_EIF}]
It is immediate to check that $\bar{D}_{T,N}(q_0)$ and $\bar{D}_{0,\infty,N}$ are centered, and therefore
\begin{align}
    \Var \left(\bar{D}_{T,N}(q_0)(L(t,i), C(t,i)) \right) =& \| \bar{D}_{T,N}(q_0) \|_{2, q_0, h_{0,t,i}}^2, \\
    \Var \left( \bar{D}_{0,\infty,N}(L_\infty, C_\infty) \right) =& \|\bar{D}_{0,\infty,N}\|_{2, q_0, h_{0,\infty,N}}^2.
\end{align}
We have that
\begin{align}
    & \| \bar{D}_{T,N}(q_0) \|_{2, q_0, h_{0,t,i}} - \|\bar{D}_{0,\infty,N}\|_{2, q_0, h_{0,\infty,N}} \\\
    =&  \| \bar{D}_{T,N}(q_0) \|_{2, q_0, h_{0,t,i}} -  \| \bar{D}_{0,\infty,N} \|_{2, q_0, h_{0,t,i}} \\
    &+ \| \bar{D}_{0,\infty,N} \|_{2, q_0, h_{0,t,i}} - \|\bar{D}_{0,\infty,N}\|_{2, q_0, h_{0,\infty,N}}.
\end{align}
We first start with the first term. We have that 
\begin{align}
    &\left\lvert \| \bar{D}_{T,N}(q_0) \|_{2, q_0, h_{0,t,i}} -  \| \bar{D}_{0,\infty,N} \|_{2, q_0, h_{0,t,i}} \right\rvert \\
    \leq &  \| \bar{D}_{T,N}(q_0) - \bar{D}_{0,\infty,N} \|_{2, q_0, h_{0,t,i}} \\
    \leq & \sum_{s=1}^\tau \sum_{j=1}^N \left\lVert \frac{1}{\bar{h}_{0,T,N}} - \frac{1}{h_{0,\infty,N}} \right\rVert_{2,h_{0,t,i}} \| h^*_{0,s,j} \widetilde{D}_{s,j,N}(q_0) \|_\infty \\
    \leq & 2 B \bm{\varphi} \tau \left\lVert \frac{1}{\bar{h}_{0,T,N}} - \frac{1}{h_{0,\infty,N}} \right\rVert_{2,h_{0,t,i}} \\
    =& o(1).
\end{align}
The third line above follows from the triangle inequality. The fourth line above is a consequence of lemma \ref{lemma:bound_tildeD_phi_mixing}. The fifth line follows from assumption \ref{assumption:cvgence_inverse_marginals_bounded_marginal_ratios}.

We now turn to the second term. We have that
\begin{align}
     &\| \bar{D}_{0,\infty,N} \|^2_{2, q_0, h_{0,t,i}} - \|\bar{D}_{0,\infty,N}\|^2_{2, q_0, h_{0,\infty,N}} \\
     =& E \left[  \bar{D}^2_{0,\infty,N}(L(t,i), C(t,i)) \right] - E \left[  \bar{D}^2_{0,\infty,N}(L_\infty, C_\infty) \right] \\
     & \to \infty 0,
\end{align}
since, from assumption \ref{assumption:ergodicity}, $((L(t,i), C(t,i)) \to (L_\infty, C_\infty))$, and $\bar{D}_{0, \infty, N}$ is a bounded continuous function.
\end{proof}

\subsection{Proof of the weak convergence of the martingale term}

\begin{proof}[Proof of \ref{thm:weak_convergence_M1xTN}]
 Order the couples $(t,i)$ as $(1,1),\ldots,(1,N),\ldots,(T,1),\ldots,(T,N)$, and let $(t(k), i(k))$ be the $k$-th couple in this ordering. Let 
 \begin{align}
     Z_{k,TN} := \frac{\bar{D}_{TN}(q_0)(X(t(k), i(k))}{\sigma_{0,\infty,N} \sqrt{TN}}.
 \end{align}
 Observe that under assumptions \ref{assumption:phi_mixing}, \ref{assumption:marginal_density_ratios_bound}, from lemma \ref{lemma:bound_barD_phi_mixing}, $\|\bar{D}_{TN}(q_0)\|_\infty \leq C$, for some $C < \infty$ that does not depend on $N$.
 
 Therefore, assumption \eqref{eq:cond1_McLeish} in theorem \ref{thm:mcleish_FCLT} is trivially checked. Let us now turn to assumption \eqref{eq:cond2_McLeish}. First, observe that, as $E[Z_{k,TN}^2] = \Var_{q_0, h_{0,i(k), t(k)}}(\bar{D}_{TN}(q_0)(X(t(k), i(k)) / (TN \sigma_{0,\infty,N}^2)$, we have that $NT E[Z_{k,TN}^2] \to 1$ as $k \to \infty$, and therefore, by Cesaro's lemma for deterministic sequences of real valued numbers,
 \begin{align}
     \sum_{k=1}^{\left\lfloor x T N \right\rfloor} E[Z_{k,TN}^2] \to x.
 \end{align}
 We now need to show that $V_{TN}(x) := \sum_{k=1}^{\left\lfloor x T N \right\rfloor} Z_{k,TN}^2$ converges in probability to its mean as $T \to \infty$. We proceed by taking the variance of $V_{TN}(x)$ and showing that it converges to zero, which, by Chebyshev's inequality will give us the wished result. We have
 \begin{align}
     \Var(V_{TN}(x)) =& \sum_{k=1}^{\left\lfloor x N T \right\rfloor} \Var(Z^2_{k,TN}) + \sum_{1 \leq k_1 < k_2 \leq \left\lfloor x N T \right\rfloor} \Cov(Z^2_{k_1,TN}, Z^2_{k_2,TN}) \\
     \leq & x N T \|Z_{k,TN} \|^4_\infty + \sum_{k_1=1}^{ \left\lfloor x N T \right\rfloor} \sum_{k_2 = k_1 + 1}^{ \left\lfloor x N T \right\rfloor} \|Z_{k_1, TN}\|_\infty^4 \alpha \left( X(t(k_1), i(k_1)), X(t(k_2), i(k_2)\right) \\
     \leq & x N T \frac{C^4}{ (TN)^2  \sigma_{0, \infty, N}^4} (1 + o(NT)) \\
     =& o(1),
 \end{align}
where the third line follows from assumption 5. By Chebyshev's inequality, we thus obtain that
\begin{align}
     \sum_{k=1}^{\left\lfloor x N T \right\rfloor} Z_{k,TN}^2 - E \left[Z_{k,TN}^2 \right] \to 0, 
\end{align}
which implies that \eqref{eq:cond2_McLeish}. This concludes the proof.
\end{proof}

\section{Proof of the exponential deviation bound for nuisance estimators}

\begin{proof}[Proof of theorem \ref{thm:hp_bound_MLE_q0}] Most of the work in this proof is to check the conditions of our generic theorem \ref{thm:exp_bound_for_ERM} for empirical risk minimizers, in particular the variance bound (assumption \ref{assumption:var_bound}, the assumption connecting the $\|\cdot\|_\infty$ norm to the norm $\sigma$ (assumption \ref{assumption:sigma_alpha_dominates_sup_norm}), and the entropy bound. In doing so, we follow closely the techniques presented in section 3.4.1, chapter 3.4 of \cite{vdV_Wellner96} for the analysis of maximum likelihood estimators.

\paragraph{Notation.} We introduce the alternative loss
\begin{align}
    \widetilde{\ell}_n(q)(c,o) := - \log \left( \frac{q + q_n}{2 q_n}(o \mid c)\right),
\end{align}
the corresponding empirical and population risks
\begin{align}
    \widehat{\widetilde{R}}_n(q) := \frac{1}{n} \sum_{i=1}^n \widetilde{\ell}_n(q)(\widetilde{C}(k), \widetilde{O}(k)) \qquad \text{and} \qquad \widetilde{R}_{0,n} := \frac{1}{n} \sum_{i=1}^n E_{q_0, \widetilde{h}_k} \left[ \widetilde{\ell}_n(q)(\widetilde{C}(k), \widetilde{O}(k)) \right].
\end{align}
For any $c$ and any two conditional densities $q_1(\cdot \mid c)$ and $q_1(\cdot \mid c)$, we introduce the conditional Hellinger distance:
\begin{align}
    H(q_1,q_2 \mid c) := \left( \int \left( \sqrt{q_1(o,c)} - \sqrt{q_2(o,c)} \right)^2 do \right)^{1/2}.
\end{align}
For any marginal density $h :\mathcal{C} \to \mathbb{R}$, and any two $q_1$ and $q_2$, we define the conditional Hellinger distance integrated against $h$:
\begin{align}
H_h(q_1,q_2) := \left( \int H^2(q_1, q_2 \mid c) h(c) dc \right)^{1/2}.
\end{align}
We further define
\begin{align}
    H_n(q_1, q_2 \mid c) = H(q_1 + q_n, q_2 + q_n \mid c) \qquad \text{and} \qquad H_{h,n} (q_1,q_2) = H_h(q_1 + q_n, q_2 +q_n).
\end{align}
For a conditional density $q(\cdot \mid c)$, a positive number number $p \geq 1$, let, for any $f:\mathcal{O}  \times \mathcal{C} \to \mathbb{R}$,
\begin{align}
    &\left\lVert f(\cdot, c) \right\rVert_{q( \cdot \mid c), p} := \left( \int \left\lvert f(c,o)\right\rvert^p q(o \mid c) do \right)^{1/p}, \\
    \text{and} &\left\lVert f(\cdot \mid c) \right\rVert_{q( \cdot \mid c), B} := \left( \sum_{p \geq 2} \frac{\left\lVert f(\cdot, c) \right\rVert_{q( \cdot \mid c), p}^p}{p!} \right)^{1/2},
\end{align}
be the $L_p$ norm, and the so-called Bernstein ``norm'' \footnote{It is not actually a norm, but this doesn't matter for what follows.} with respect to $q(\cdot \mid c)$. 

\paragraph{Checking the entropy condition.} We have that 
\begin{align}
    & \left\lVert \widetilde{\ell}_n(q_1) - \widetilde{\ell}_n(q_2) (\cdot, c) \right\rVert_{q_0(\cdot \mid c), 2}^2 \\
    \leq & \left\lVert \widetilde{\ell}_n(q_1) - \widetilde{\ell}_n(q_2) (\cdot, c) \right\rVert_{q_0(\cdot \mid c), B}^2 \\
     \lesssim & H_n^2(q_1, q_2 \mid c) \\
    =& \int \left( \frac{q_1 - q_2}{\sqrt{q_1 + q_n} + \sqrt{q_2 + q_n}} \right)^2(o \mid c) do \\
    \lesssim & \int (q_1 - q_2)^2(o \mid c) do.
\end{align}

The inequality in the third line above is proven to hold under assumption \ref{assumption:ratio_q0_qn} in section 3.4.1 of \cite{vdV_Wellner96}. The last line follows assumption \ref{assumption:lower_bound_pop_MLE}.

Let $\mu_{\mathcal{O}}$ be the Lebesgue measure on $\mathcal{O}$. Integrating the previous inequality against $\widetilde{h}_i$, and recalling the definition of $\sigma$, we have that
\begin{align}
    (1 + 2 \bm{\rho})^{-1/2} \sigma\left(\widetilde{\ell}_n(q_1) - \widetilde{\ell}_n(q_2)\right) 
    := & \sup_{i \geq 1} \left\lVert \widetilde{\ell}_n(q_1) - \widetilde{\ell}_n(q_2)  \right\rVert_{q_0, \widetilde{h}_i, 2} \\
    \lesssim &\sup_{i \geq 1} \left\lVert q_1 - q_2 \right\rVert_{\mu_{\mathcal{O}}, \widetilde{h}_i, 2} \\
    \lesssim & \| q_1 - q_2 \|_{\mu, 2},
\end{align}
where the last inequality follows from assumption \ref{assumption:unif_boundedness_h_i}. Therefore, denoting $\widetilde{\ell}_n(\mathcal{Q}) := \{ \widetilde{\ell}_n(q) : q \in \mathcal{Q} \}$, we have that
\begin{align}
    \log N_{[\,]}(\epsilon, \widetilde{\ell}_n(\mathcal{Q}), \sigma) \lesssim &\log N_{[\,]}(\epsilon, \mathcal{Q}, L_2(\mu)) \\
    \lesssim & \epsilon^{-1} \log (1 / \epsilon))^{2(d-1)},
\end{align}
where the last inequality is the claim of proposition \ref{prop:bkting_entropy_HAL}.

\paragraph{Checking the variance bound condition.} The first claim of theorem 3.4.4 in \cite{vdV_Wellner96} asserts that
\begin{align}
    H^2(q,q_n \mid c) \lesssim \int \left( \widetilde{\ell}_n(q) - \widetilde{\ell}_n(q_n)\right)(c,o) q_0(o \mid c) do.
\end{align}
In section 3.4.1 of \cite{vdV_Wellner96}, the authors also show the following claim, which we transpose to our notation:
\begin{align}
    \left\lVert \left( \widetilde{\ell}_n(q) - \widetilde{\ell}_n(q_n)\right)(c, \cdot)  \right\rVert_{q_0(\cdot \mid c), B} \lesssim H^2(q,q_n \mid c).
\end{align}
Therefore, putting the previous two inequalities together, integrating w.r.t. $\bar{\widetilde{h}}_n$ and recalling the definition of $\widetilde{R}_{0,n}$, we have that
\begin{align}
    \left\lVert  \widetilde{\ell}_n(q) - \widetilde{\ell}_n(q_n) \right\rVert_{q_0, \bar{\widetilde{h}}_n, 2} \lesssim \widetilde{R}_{0,n}(q) - \widetilde{R}_{0,n}(q_n).
\end{align}
Using assumption \ref{assumption:bound_h_i_over_hbar_n} then yields that
\begin{align}
    \sigma^2(q, q_n) \lesssim \widetilde{R}_{0,n}(q) - \widetilde{R}_{0,n}(q_n),
\end{align}
which is the wished variance bound condition.

\paragraph{Checking assumption \ref{assumption:sigma_alpha_dominates_sup_norm}.} Lemma \ref{lemma:sup_norm_bound_Lipschitz} gives us that assumption \ref{assumption:sigma_alpha_dominates_sup_norm} holds for $\alpha = 1/(d+1)$.

\paragraph{Upper bounding the rate of convergence.} We calculate $r_n$ defined in theorem \ref{thm:exp_bound_for_ERM}. Observing that, from lemma \ref{prop:bkting_entropy_HAL}, it holds for any $\nu > 0$ that $\log N_{[\,]}(\epsilon, \mathcal{Q}, \sigma) \lesssim \epsilon^{-(1+\nu)}$, we have 
\begin{align}
    \phi_n(r) = \frac{r^{\frac{1}{d+1}}}{\sqrt{n}} + \log n \frac{r^{\frac{1-\nu}{2}}}{\sqrt{n}} + \frac{(\log n)^2}{n} r^{\frac{1}{d+1} - 1 - \nu}.
\end{align}
For $d \geq 2$, $\nu$ small enough, and $n$ large enough, it is straightforward to observe 
\begin{align}
    n^{-\frac{2}{4 - 2 \alpha}} \gtrsim \phi_n( n^{-\frac{1}{4 - 2 \alpha}} ),
\end{align}
which, from lemma \ref{lemma:sublinear_functions} implies that $r_n \lesssim n^{-\frac{1}{4 - 2 \alpha}}$. We have thus checked the assumptions of theorem \ref{thm:exp_bound_for_ERM} and shown that $r_n$ is upper bounded by the wished rate, which implies the claim.
\end{proof}

\section{Proof of the type-I error guarantee for the adaptive stopping rule (theorem \ref{thm:type_I_error_adaptive_stopping})}

\begin{proof}[Proof of theorem \ref{thm:type_I_error_adaptive_stopping}] Under $H_0$, we have that $\Psi(P_0^{T,N}) = 0$. That the procedure rejects is therefore equivalent to the following event:
\begin{align}
    & \left\lbrace \exists T \in [t_0 T_{\max}, T_{\max}],\ \sqrt{\frac{T}{T_{\max}}} \sqrt{T N} \frac{ \left( \widehat{\Psi}_{T,N} - \Psi(P_0^{T,N}) \right) }{\sigma_{0,\infty,N}} \in [\pm a_\alpha(T / T_{\max})] \right\rbrace \\
    =& \left\lbrace \exists T \in [t_0 T_{\max}, T_{\max}],\ \frac{T}{T_{\max}} \sqrt{T_{\max} N} \frac{ \left( \widehat{\Psi}_{T,N} - \Psi(P_0^{T,N}) \right) }{\sigma_{0,\infty,N}} \in [\pm a_\alpha(T / T_{\max})] \right\rbrace \\
    =& \left\lbrace \sup_{t \in [t_0,1]} t \sqrt{T_{\max} N}  \frac{ \left\lvert \widehat{\Psi}_{t T_{\max},N} - \Psi(P_0^{t T_{\max},N}) \right\rvert }{\sigma_{0,\infty,N} a_{\alpha}(t)} \leq 1 \right\rbrace
\end{align}

Let $\phi$ be the function defined on the set $\mathbb{D}([0,1])$ of real-valued cadlag functions on $[0,1]$ by 
\begin{align}
    \phi:f \mapsto \sup_{t \in [t_0, 1]} \frac{|f(t)|}{a_\alpha(t)}.
\end{align}

For $([\pm a_\alpha(t) : t \in [0,1])$ to be an $(1-\alpha)$ joint confidence band for $(W(t) : t \in [0,1])$, each $[\pm a_\alpha(t)]$ must contain $W(t)$ with probability at least $1-\alpha$, and therefore, for every $t \geq t_0$, we must have $a_\alpha(t) \geq \sqrt{t_0} q_{1-\alpha/2}$, where $q_{1-\alpha/2}$ is the $1-\alpha/2$ quantile of the standard normal. Therefore, the denominator in the definition of $\phi$ remains uniformly in $t$ bounded away from 0, thus ensuring that $\phi$ is continous w.r.t $\|\cdot\|_\infty$, and bounded.

Therefore, from the fact that, 
\begin{align}
    \left\lbrace t \sqrt{T_{\max} N} \left( \widehat{\Psi}_{N,T} - \Psi(P_0^{T,N}) \right) : t \in [t_0, 1] \right\rbrace \xrightarrow{d} W,
\end{align}
by definition of weak convergence, and continuity and boundedness of $\phi$ as a mapping $(\mathbb{D}([0,1]), \|\cdot\|_\infty \to (\mathbb{R}, | \cdot |)$, we have that
\begin{align}
     \lim_{T_{\max} \to \infty} P_0 \left[ \sup_{t \in [t_0,1]} t \sqrt{T_{\max} N}  \frac{ \left\lvert \widehat{\Psi}_{t T_{\max},N} - \Psi(P_0^{t T_{\max},N}) \right\rvert }{\sigma_{0,\infty,N} a_{\alpha}(t)} \leq 1 \right] = P_0 \left[ \sup_{t \in [t_0,1]} \frac{|W(t)|}{ a_\alpha(t)} \leq 1 \right] \geq 1-\alpha,
\end{align}
where the latter inequality follows by definition of $a_\alpha(t)$
\end{proof}

$$ P \left[ \forall n \geq 1, \| \bar{D} (\widehat{q}_n ) - \bar{D}(q_0) \|_\infty \leq C(\epsilon) a_n \right] \geq 1 - \epsilon.$$
\end{document}